\newcommand{\N}{\mathbb{N}}
\newcommand{\R}{\mathbb{R}}
\newcommand{\T}{\mathbb{T}}
\newcommand{\pr}{\mathbb{P}}
\DeclarePairedDelimiter\abs{|}{|}
\DeclarePairedDelimiterX{\inner}[2]{\langle}{\rangle}{#1,#2}
\DeclarePairedDelimiterX{\norm}[1]{\|}{\|}{#1}
\DeclareMathOperator{\ExpOp}{\mathbb{E}}
\newcommand{\E}[1]{\ExpOp\left[#1\right]}
\DeclarePairedDelimiter\floor{\lfloor}{\rfloor}
\newenvironment{customthm}[1]
  {\innercustomthm}
  {\endinnercustomthm}
\newtheorem{thm}{Theorem}[section]
\newtheorem*{thm*}{Theorem}
\newtheorem{lemma}[thm]{Lemma}	\newtheorem*{lemma*}{Lemma}
\newtheorem{cor}[thm]{Corollary}
\newtheorem*{cor*}{Corollary}
\theoremstyle{definition}	
\newtheorem{Def}[thm]{Definition}
\theoremstyle{definition}
\theoremstyle{plain}		
\theoremstyle{definition}	\newtheorem{rmk}[thm]{Remark}
\theoremstyle{definition}	
\theoremstyle{definition}
\begin{document}
\begin{center}
{\Large The Isochronal Phase of Stochastic PDE and Integral Equations: \\ Metastability and Other Properties } \\
{\textsc{ (preprint)} }
\end{center}

\begin{center}
\begin{minipage}[t]{.75\textwidth}
\begin{center}
Zachary P.~Adams \\
Max Planck Institute for Mathematics in the Sciences\\
zachary.adams@mis.mpg.de\\
$\,$\\
James MacLaurin\\
New Jersey Institute of Technology \\
maclauri@njit.edu\\ 
\hfill
\end{center}
\end{minipage}
\end{center}
\begin{center}
\today 
\end{center}
\vspace{.5cm}
\setlength{\unitlength}{1in}

\vskip.15in

{
\centerline
{\large \bfseries \scshape Abstract}
We study the dynamics of waves, oscillations, and other spatio-temporal patterns in stochastic evolution systems, including SPDE and stochastic integral equations. 
Representing a given pattern as a smooth, stable invariant manifold of the deterministic dynamics, we reduce the stochastic dynamics to a finite dimensional SDE on this manifold using the isochronal phase. 
The isochronal phase is defined by mapping a neighbourbhood of the manifold onto the manifold itself, analogous to the isochronal phase defined for finite-dimensional oscillators by A.T.~Winfree and J.~Guckenheimer. 
We then determine a probability measure that indicates the average position of the stochastic perturbation of the pattern/wave as it wanders over the manifold. 
It is proved that this probability measure is accurate on time-scales greater than $O(\sigma^{-2})$, but less than $O(\exp(C\sigma^{-2}))$, where $\sigma\ll1$ is the amplitude of the stochastic perturbation. 
Moreover, using this measure, we determine the expected velocity of the difference between the deterministic and stochastic motion on the manifold. 
}\\

\noindent{\bfseries \emph{Keywords}: } Isochrons $\cdot$ It{\^o}'s Lemma $\cdot$ Metastability $\cdot$ Nerve axon equations $\cdot$ Neural field equations $\cdot$ Travelling waves $\cdot$ Spiral waves \\

\section{Introduction}
\label{sec:Intro}
This paper presents results on stochastic perturbations of spatio-temporal patterns in stochastic PDE and integral equations. 
Specifically, we determine a probability distribution that characterizes the long-time behaviour of certain patterns that are stationary or periodic in time under small noise perturbations. 
These results are obtained using an abstract framework that encompasses travelling waves in nerve axons \cite{HH52,FH61}, wandering bumps in neural field equations \cite{B12}, and spiral waves in excitable media \cite{J03,LJL08,UWT10}. 
Our results pave the way for the quantitative characterization of the long-time behaviour of transient coherent structures in evolution systems perturbed by small noise. 

Our mathematical framework is as follows. 
We study systems of $n$ stochastic evolution equations of the form 
\begin{equation}
\label{eq:SDE}
dX_t\,=\, \left(AX_t + N(X_t)\right)\,dt + \sigma B(X_t)\,dW
\end{equation}
driven by noise of amplitude $\sigma$ which is white in time, and may be white or coloured in space. 
Solutions to \eqref{eq:SDE}~take values in a Hilbert space $H$ of functions mapping a $d$-dimensional spatial domain $O$ to $\R^n$. 
We take $O$ to be a periodic domain, such as the torus $\T$ or a sphere $\mathbb{S}^d$ for some $d\in\N$. 
The operator $A$ is a linear operator modelling, for instance, diffusion, while $N$ is a  nonlinear operator modelling local and non-local reactions. 
Further technical assumptions on \eqref{eq:SDE}~and remarks on the solution theory we work with are stated in Section \ref{sec:Setup}.  
We emphasize here however that we do not require the noise in \eqref{eq:SDE}~to be trace class, 

As noted in the paragraphs above, \eqref{eq:SDE}~includes models of various real world systems, though the two classes of systems in which we are primarily interested in here are (i) reaction-diffusion systems (specifically, nerve axon equations, in the sense of Evans \cite{E72i}), in which case $A=\Delta$ is the Laplace operator and $N$ is a polynomial, and (ii) neural field equations \cite{B12}, in which case $Lx = -a^{-1}x$ for a constant $a>0$, and $N$ is an integral operator. 
Further discussion of these examples and how they fit into our framework can be found in Remark \ref{rmk:EX}. 

We denote the solution process of \eqref{eq:SDE}~by $X_t$. 
Our particular interest is in the behaviour of \eqref{eq:SDE}~near invariant manifolds of the system without noise. 
Specifically, we assume that the corresponding deterministic system, 
\begin{equation}
\label{eq:ODE}
\partial_tx\,=\, Ax + N(x)\,\eqqcolon\,V(x), 
\end{equation}
possesses a stable invariant manifold\footnote{Specifically, a stable normally hyperbolic invariant manifold; see \cite{BLZ00}~for a precise definition}~$\Gamma\subset H$, which may be parameterized by a finite dimensional manifold $\mathcal{S}$. 
In concrete terms, the manifold $\Gamma$ consists of all possible states of a particular spatio-temporal pattern which persists in \eqref{eq:ODE}~for all time. The spatio-temporal patterns in which we are principally interested include periodic solutions, such as travelling pulses on periodic spatial domains \cite{AK15, KSS97}~and spiral waves \cite{BL08,SSW97,X93}. 
In the former case, we take $\mathcal{S}$ to be the circle $\mathbb{S}^1$, while in the latter case we take $\mathcal{S}$ to be the special Euclidean group $SE(O)$. 
Our approach also applies to stationary solutions on a spherical spatial domain, such as those occurring in certain neural field equations \cite{CGG01,D02,VRFC17}. 
We anticipate that our framework may be extended to include other time-periodic patterns such as breathers \cite{CW21, FG08}. 

Generally, it is very difficult to study stochastic perturbations of the patterns modelled by the manifold $\Gamma$. 
To facilitate such a study, it is common to ``reduce'' the infinite dimensional dynamics of \eqref{eq:SDE}~to a finite dimensional stochastic process on $\mathcal{S}$ \cite{BLZ00,engel2021random}. 
In this work, we reduce the dynamics of \eqref{eq:SDE}~near the deterministic invariant manifold $\Gamma$ by using the \emph{isochronal phase}. 
The term isochronal phase is commonly attributed to Winfree \cite{W74}, where it was used in the context of oscillatory dynamics in finite dimensional biological systems. 
Winfree's isochronal phase was then studied rigorously by Guckenheimer \cite{G75}, and has since been used in many contexts, as described in Section \ref{sec:Literature}. 

Towards the end of defining the isochronal phase, we let 
\[
(\alpha\mapsto\gamma_\alpha): \mathcal{S}\rightarrow\Gamma, 
\]
denote the parameterization of $\Gamma$ by $\mathcal{S}$. 
The definition of the isochronal phase depends on \eqref{eq:ODE}~having a well-posed solution theory, along with the stability of $\Gamma$. 
In this case, for some $\delta$-neighbourhood $\Gamma_\delta$ of $\Gamma$ there exists a map $\pi:\Gamma_\delta\rightarrow\mathcal{S}$ such that for each $x\in\Gamma_\delta$, $\pi(x)$ is the unique point in $\mathcal{S}$ such that $x$ and $\gamma_{\pi(x)}$ are attracted to each other under the flow of \eqref{eq:ODE}. 

The map $\pi:\Gamma_\delta\rightarrow\mathcal{S}$, henceforth referred to as the \emph{isochron map}, allows us to reduce the dynamics of \eqref{eq:SDE}~near $\Gamma$ to a finite dimensional stochastic process in a natural way. 
In particular, we define the \emph{isochronal phase}~of \eqref{eq:SDE}~near $\Gamma$ as $\pi_t\coloneqq\pi(X_t)$. 
In many cases (for instance in the case of additive white noise, when $B\equiv1$ and $W$ is white in space and time), the noise in \eqref{eq:SDE}~will cause the solution to exit $\Gamma_\delta$ at some almost surely finite stopping time, denoted by $\tau_\delta$. 
This means that the isochronal phase $\pi_t$ is only guaranteed to be defined for $t<\tau_\delta$. 
Nevertheless, when the noise amplitude $\sigma$ of \eqref{eq:SDE}~is small, one expects $\tau_\delta$ to be large with high probability, due to the deterministic stability of $\Gamma$.  
Indeed, it is proven in \cite{M22}~that $\tau_\delta$ has a high probability of being greater than $\exp(C\sigma^{-2}\delta^2)$ for some $C>0$.  
One might therefore expect to observe some sort of metastable behaviour of the isochronal phase on the random time interval $[0,\tau_\delta)$. 

The remainder of the paper is structured as follows. 
Our major results are summarized in Section \ref{sec:Results}, after which follows a non-exhaustive survey of related literature in Section \ref{sec:Literature}. 
Following this, we state the precise setup in which we work in Section \ref{sec:Setup}, also discussing systems that fall into our framework and the limitations of our study. 
Section \ref{sec:Regularity}~then proves various regularity properties of the isochron map. 
In Section \ref{sec:Ergodic}, these regularity properties are leveraged to obtain results on the long time behaviour of \eqref{eq:SDE}~near $\Gamma$~in the small noise regime.

\subsection{Results}
\label{sec:Results}
In this work, we prove two major results regarding the isochronal phase of \eqref{eq:SDE}~under the assumptions of Section \ref{sec:Setup}. 
As discussed in Section \ref{sec:Setup}, the examples of travelling waves in nerve axon equations, wandering bumps in neural field equations, and spiral waves in reaction-diffusion systems fit into our framework when analyzed on periodic spatial domains. 
The first result, proven in Theorem \ref{thm:Ito}~and the discussion of Section \ref{sec:Regularity}, provides an It{\^o}~formula for the evolution of $\pi_t$.

\begin{customthm}{A}
The isochron map $\pi:\Gamma_\delta\rightarrow\mathcal{S}$ is twice continuously differentiable \footnote{In the topology of a Banach subspace of $H$, specified in Section \ref{sec:Setup}. }.
Moreover, $\pi_t=\pi(X_t)$ satisfies the It{\^o}~formula \eqref{eq:piIto}. 
\end{customthm}
 
In the case of noise that is uncorrelated in space, the above theorem is non-trivial, and requires us to prove a few technical lemmas on the regularity of the isochron map $\pi:\Gamma_\delta\rightarrow\mathcal{S}$. 
The It{\^o}~formula and regularity properties of the isochron map that we prove in Section \ref{sec:Regularity}~are essential to the remainder of our study, and allow us to prove a metastability result for the isochronal phase via relatively direct arguments. 

To understand the long time behaviour of the isochronal phase, we first need estimates on the exit time $\tau_\delta$. 
Indeed, such estimates follow immediately from the result of the second author in \cite{M22}~based on the theory of Kapitula \&~Promislaw \cite{KP13}. 
The result of \cite{M22}~holds for many of the systems in which we are interested. 
Hence, we assume that a concentration inequality of the following form holds. 

\begin{lemma*}
There exist $c,C>0$ such that for all $t>0$ and small $\sigma,\delta>0$, 
\begin{equation}
\label{eq:Concentration}
\pr\left(t\ge\tau_\delta\right)\,\le\,ct\exp\left(-C\sigma^{-2}\delta^2\right). 
\end{equation}
\end{lemma*}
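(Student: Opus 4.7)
The plan is to bound the exit time $\tau_\delta$ via a Lyapunov function measuring the transverse distance of $X_t$ from $\Gamma$ in the isochronal decomposition, combined with an exponential martingale argument, following the strategy of \cite{M22}. For $t < \tau_\delta$ I write $X_t = \gamma_{\pi_t} + Y_t$ where $\pi_t = \pi(X_t)$ and $Y_t$ is the transverse deviation along the isochron fiber. Normal hyperbolicity of $\Gamma$ supplies a spectral gap $-\lambda < 0$ for the linearization $L_\alpha := A + DN(\gamma_\alpha)$ on the stable complement of $T_{\gamma_\alpha}\Gamma$, uniformly in $\alpha\in\mathcal{S}$ by compactness. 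Solving a family of Lyapunov equations on these stable subspaces produces quadratic forms $V_\alpha(y) = \inner{y}{Q_\alpha y}$ that are coercive on the complement and satisfy $\inner{y}{(L_\alpha^\ast Q_\alpha + Q_\alpha L_\alpha)y} \le -2\lambda' V_\alpha(y)$ for some $\lambda' > 0$.

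Next, I apply the It\^{o} formula of Theorem A to $V(X_t) := V_{\pi_t}(Y_t)$, viewed as a $C^2$ function of $X_t$, to obtain an SDE of the form
\begin{equation*}
dV(X_t) \,=\, \bigl(-2\lambda' V(X_t) + R_t + \tfrac{\sigma^2}{2}\Sigma_t\bigr)\,dt + \sigma\, dM_t, \qquad t < \tau_\delta,
\end{equation*}
where $R_t = O(V(X_t)^{3/2})$ collects nonlinear and curvature corrections, $\Sigma_t$ is a trace-type It\^{o} correction bounded uniformly on $\Gamma_\delta$, and $M_t$ is a local martingale with $d\langle M\rangle_t \lesssim V(X_t)\,dt$. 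For $\delta$ sufficiently small, the cubic remainder is absorbed by the linear contraction on $[0,\tau_\delta)$, leaving a clean contractive drift perturbed only by additive $O(\sigma^2)$ forcing.

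An exponential martingale argument then closes the estimate: for $\beta = \eta\sigma^{-2}$ with $\eta > 0$ small, the process $\exp\bigl(\beta V(X_{t\wedge\tau_\delta}) - c_1 \eta t\bigr)$ is shown to be a supermartingale, using that the quadratic-variation contribution $\tfrac{\beta^2}{2}\sigma^2 V \lesssim \eta^2 \sigma^{-2} V$ is dominated by $-2\beta \lambda' V$ once $\eta$ is small. Optional stopping at $\tau_\delta$, together with the lower bound $V(X_{\tau_\delta}) \ge c_0 \delta^2$ obtained from the equivalence of $V$ with $\norm{Y}^2$, and Markov's inequality yield $\pr(\tau_\delta \le T) \le \exp(c_1 \eta T - \eta c_0 \sigma^{-2} \delta^2)$. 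Partitioning $[0,T]$ into unit-length intervals and a union bound (using the strong Markov property and that $X_t$ restarts inside $\Gamma_\delta$ if it has not yet exited) produces the stated $c\,T\,\exp(-C\sigma^{-2}\delta^2)$.

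The hard part will be controlling the quadratic variation of $M_t$ when the driving noise $\sigma B(X_t)\,dW$ is not trace class on $H$. Finiteness of $d\langle M\rangle_t/dt$ requires $\Tr\bigl(B(X_t)^\ast Q_{\pi_t} B(X_t)\bigr) < \infty$ even when $\Tr(B^\ast B)$ is not, which is precisely where the parabolic smoothing of $A$ enters, since $Q_\alpha$ inherits compactness from the resolvent of $L_\alpha$ on the stable subspace. A secondary difficulty is establishing the uniform cubic bound $R_t = O(V^{3/2})$ and the required $C^2$-smoothness of $\alpha \mapsto Q_\alpha$, both of which hinge on the regularity of $\pi$ furnished by Theorem A.
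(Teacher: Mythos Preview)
Your proposal sketches a direct Lyapunov/exponential-martingale argument in isochronal coordinates, which is a reasonable strategy and is essentially what \cite{M22} does in variational coordinates. However, the paper does \emph{not} prove this lemma from scratch. In the main text the inequality is explicitly stated as an assumption (``Hence, we assume that a concentration inequality of the following form holds''), and the only justification given is a short reduction in Appendix~\ref{sec:IsoVar}: one shows that the isochronal and variational phases satisfy $\norm{\gamma_{\pi(x)}-\gamma_{\beta(x)}}\le\delta^2$ whenever $\norm{x-\gamma_{\pi(x)}}\le\delta$ (Lemma~\ref{lemma:IsoVar}), then uses a union bound to replace $\gamma_{\pi_s}$ by $\gamma_{\beta_s}$ in the exit event, and finally invokes Theorem~5.1 of \cite{M22} as a black box. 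So the paper's ``proof'' is a two-line transfer from the variational-phase result, not an independent argument.

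Your route is more self-contained and would remove the dependence on \cite{M22}, at the cost of redoing all the hard analysis: constructing the family $Q_\alpha$ with uniform bounds, establishing the $C^2$ dependence on $\alpha$, and handling the trace-class issue for $B^*Q_\alpha B$ that you correctly flag. These are genuine technical obligations, and in infinite dimensions with non-trace-class noise they are nontrivial (indeed, this is precisely the content of \cite{M22}). The paper's approach buys brevity by outsourcing all of this; yours would buy independence from the variational-phase machinery but requires you to essentially reprove \cite{M22} in a different coordinate system. If you pursue your route, the step most likely to cause trouble is the uniform solvability and smoothness of the Lyapunov equation on the stable complement when $A$ is only a generator (not bounded), since $Q_\alpha$ will typically only exist as an integral $\int_0^\infty U_\alpha(t)^* U_\alpha(t)\,dt$ restricted to the stable subspace, and verifying the required commutation and regularity properties demands care.
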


When such a concentration inequality holds, we are able to prove the main result of this paper. 
The most significant result of this paper establishes the existence of a probability measure $P_*$ on $\mathcal{S}$ which behaves as a sort of ergodic measure for the isochronal phase $\pi_t$ on timescales greater than $\sigma^{-2}$, but less than $\exp\left(C\sigma^{-2}\delta^2\right)$ for some $C>0$. The measure $P_*$ is constructed as the invariant measure of a Markov process on $\mathcal{S}$ that approximates $\pi_t$ on finite time intervals.  
Hence, we may characterize $P_*$ as the zero measure of an elliptic operator $\mathcal{A}$ on $\mathcal{S}$, given by the Markov generator of this approximation of the isochronal phase. 
For further details and an expression for the operator $\mathcal{A}$, see Theorem \ref{thm:ApproximateErgodic}.

\begin{customthm}{B}
There exists a measure $P_*$ on $\mathcal{S}$, constructed in Theorem \ref{thm:ApproximateErgodic}, such that the following holds. 
Taking small $\epsilon,\delta>0$, there exists $C>0$ such that, if $(t_{\sigma})_{\sigma>0}$ is a family of times satisfying 
\begin{equation}
\label{eq:t_sigma}
\sigma^2t_\sigma\,\xrightarrow[\sigma\rightarrow0]{}\,\infty\qquad\text{ and }\qquad 
t_\sigma\,\le\,\exp\left(C\sigma^{-2}\delta^2\right) \quad\text{for small $\sigma>0$,} 
\end{equation} 
then for $g\in C^2(\mathcal{S})$, we have 
\[
\pr\left(\abs*{\frac{1}{t_\sigma}\int_0^{t_\sigma}g(\pi_s)\,ds - P_*(g)}<\epsilon,\,t_\sigma<\tau_\delta\right)\,\xrightarrow[\sigma\rightarrow0]{}\,1. 
\]
\end{customthm}

From these results, we are then able to characterize the noise-induced drift of stochastically-perturbed limit cycles. 
The following result is proven as Corollary \ref{cor:Spheres}~in Section \ref{sec:Spheres}. 
Note that the following result is a generalization to an infinite-dimensional-setting of a result in Giacomin, Poquet, \&~Shapira \cite{GPS18}. 

\begin{cor*}
Suppose that $\mathcal{S}=\mathbb{S}^1$, and let $\overline{\pi}_t^\sigma \in \mathbb{R}$ be the arc-length of the isochronal phase of \eqref{eq:SDE}~(so that if we identify $\mathcal{S}^1$ with the interval $[0,2\pi]$ in the usual way, then $\pi_t=\overline{\pi}_t^\sigma\mod2\pi$ and $t \mapsto \overline{\pi}^{\sigma}_t$ is continuous). There exists a vector field $\mathcal{V}$ on $\mathcal{S}$ such that if $(t_\sigma)_{\sigma>0}$ is as in \eqref{eq:t_sigma}, then for all small  $\epsilon,\delta>0$ we have 
\begin{equation}
\label{eq:CorShift}
\pr\left(\abs*{\frac{1}{\sigma^2t_\sigma}\left(\overline{\pi}^\sigma_{t_\sigma} -\overline{\pi}_{t_\sigma}^0\right)- \mathbb{E}^{P_*}\left[ \mathcal{V}\right]}<\epsilon,t_\sigma<\tau_\delta\right)\,\xrightarrow[\sigma\rightarrow0]{}\,1. 
\end{equation}
\end{cor*}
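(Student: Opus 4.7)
The plan is to start from the It\^o formula for $\pi_t$ guaranteed by Theorem A, lifted to the universal cover of $\mathbb{S}^1$, and to identify the drift of $\overline{\pi}^\sigma_t - \overline{\pi}^0_t$ as an $O(\sigma^2)$ functional of the phase. Writing the lifted It\^o expansion, we obtain
\[
\overline{\pi}^\sigma_t - \overline{\pi}^\sigma_0 \,=\, \int_0^t D\pi(X_s)\,V(X_s)\,ds \,+\, \frac{\sigma^2}{2}\int_0^t \Tr\!\bigl(B(X_s)^* D^2\pi(X_s) B(X_s)\bigr)\,ds \,+\, \sigma M_t,
\]
where $M_t := \int_0^t D\pi(X_s) B(X_s)\,dW_s$ is a continuous local martingale (all this is valid for $t<\tau_\delta$). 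A key structural property of the isochron map in the periodic case is that $D\pi(x)V(x)$ is \emph{constant} on $\Gamma_\delta$ (each isochron is flow-invariant with the period of the cycle), so the first drift integral equals the deterministic motion, i.e.\ exactly $\overline{\pi}^0_t-\overline{\pi}^0_0$. Subtracting and dividing by $\sigma^2 t_\sigma$ yields, upon choosing matched initial conditions,
\[
\frac{1}{\sigma^2 t_\sigma}\bigl(\overline{\pi}^\sigma_{t_\sigma}-\overline{\pi}^0_{t_\sigma}\bigr)
\,=\, \frac{1}{t_\sigma}\int_0^{t_\sigma} \tilde{\mathcal V}(X_s)\,ds \,+\, \frac{M_{t_\sigma}}{\sigma t_\sigma},
\]
where $\tilde{\mathcal V}(x):=\tfrac{1}{2}\Tr(B(x)^* D^2\pi(x) B(x))$. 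The natural candidate for the vector field in the statement is $\mathcal{V}(\alpha):=\tilde{\mathcal V}(\gamma_\alpha)$.

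Next I would replace $\tilde{\mathcal V}(X_s)$ by $\mathcal V(\pi_s)$ inside the integral. Since $\tilde{\mathcal V}$ is continuous on $\Gamma_\delta$ (by Theorem A and the assumed regularity of $B$) and $\mathrm{dist}(X_s,\gamma_{\pi_s})\le C\delta$ for $s<\tau_\delta$, the difference is $O(\delta)$ and can be absorbed into $\epsilon$ by taking $\delta$ small. Then Theorem B, applied to the $C^2$ test function $g=\mathcal V$ on $\mathcal{S}=\mathbb{S}^1$, gives that on $\{t_\sigma<\tau_\delta\}$ the time average $\tfrac{1}{t_\sigma}\int_0^{t_\sigma}\mathcal V(\pi_s)\,ds$ converges in probability to $\mathbb{E}^{P_*}[\mathcal V]$.

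The remaining task is to show that the martingale term is negligible: $M_{t_\sigma}/(\sigma t_\sigma)\to 0$ in probability on $\{t_\sigma<\tau_\delta\}$. The quadratic variation satisfies $\langle M\rangle_t \le K t$ for $t<\tau_\delta$, because $D\pi$ and $B$ are bounded on $\Gamma_\delta$ by the regularity in Theorem A. Doob's $L^2$ inequality (applied to the stopped martingale $M_{t\wedge\tau_\delta}$) therefore gives $\mathbb{E}[M_{t_\sigma\wedge\tau_\delta}^2]\le K t_\sigma$, so by Chebyshev
\[
\pr\!\Bigl(\bigl|M_{t_\sigma\wedge\tau_\delta}\bigr|\ge \eta\,\sigma t_\sigma\Bigr)\,\le\,\frac{K}{\eta^2\,\sigma^2 t_\sigma}\,\xrightarrow[\sigma\to 0]{}\,0,
\]
using precisely the hypothesis $\sigma^2 t_\sigma\to\infty$ in \eqref{eq:t_sigma}. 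Combining the three pieces (drift identification, Theorem B, and martingale control) and intersecting with the event $\{t_\sigma<\tau_\delta\}$ proves \eqref{eq:CorShift}.

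The main obstacle I anticipate is the constancy of $D\pi(x) V(x)$ off the manifold: on $\Gamma$ it is immediate from $\pi(\gamma_\alpha)=\alpha$ and invariance of the isochrons under the flow, but extending it rigorously to all of $\Gamma_\delta$ requires the isochron foliation property and the $C^2$ regularity from Theorem A. If only $D\pi(x)V(x)=1+O(\mathrm{dist}(x,\Gamma))$ holds, the extra drift error is $O(\delta t_\sigma)$, which, after division by $\sigma^2 t_\sigma$, is $O(\delta/\sigma^2)$ and would not be negligible; handling this correctly, likely by using the invariance of each isochron under the deterministic semiflow (so that $D\pi(x)V(x)$ is genuinely constant, not just close to constant), is the delicate step. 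Once that identity is in hand, the remainder of the argument is a direct application of Theorem B and a standard martingale estimate.
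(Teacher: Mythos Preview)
Your proposal is correct and follows essentially the same decomposition as the paper: lift the It\^o formula to the cover, use that $D\pi(x)V(x)$ is constant on $\Gamma_\delta$ to cancel the deterministic motion, replace $\tilde{\mathcal V}(X_s)$ by $\mathcal V(\pi_s)$ using Lipschitz continuity and $\norm{X_s-\gamma_{\pi_s}}_E<\delta$, apply Theorem~B with $g=\mathcal V$, and control the martingale term.

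Two minor remarks. First, the constancy of $D\pi(x)V(x)$ that worries you is not delicate: it follows directly from the flow-invariance identity $\pi(\phi_t(x))=\tilde\phi_t(\pi(x))$ (Corollary~\ref{Cor isochronal phase invariant} in the paper), which upon differentiation at $t=0$ gives $D\pi(x)[V(x)]=\tilde N(\pi(x))$; this is exactly constant on isochrons, so your feared $O(\delta/\sigma^2)$ error does not arise. Second, for the martingale term the paper uses a Dambis--Dubins--Schwarz time change plus an exponential submartingale (Doob) inequality to obtain a bound of the form $\exp(-Kt^3)$, which it needs because it actually proves the sharper quantitative estimate $1-t\exp(-c\sigma^{-2}\delta^2)-\exp(-C_\epsilon\sigma^2 t)$; your Chebyshev/$L^2$ bound $K/(\eta^2\sigma^2 t_\sigma)$ is weaker but entirely sufficient for the convergence-in-probability statement you are asked to prove.
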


The precise assumptions that allow us to conclude the above results are described in Section \ref{sec:Setup}. 
First, we review the literature leading to our conclusions.

\subsection{Literature}
\label{sec:Literature}
This section is a non-exhaustive review of methods and results related to those used and obtained in this paper. 

We first remark on the relevance of our results to applied disciplines. 
In neural field theory, there has been a long-standing interest in understanding the wandering of patterns over long periods of time due to fluctuations in neural activity \cite{BW12,del2016adaptation,kilpatrick2013wandering,maclaurin2020wandering,carrillo2022noise}. 
In fact, there is an ongoing debate concerning the degree to which the wandering of ``bumps'' of excited activity in the visual cortex is due to a slow adaptation of the neural connections, as opposed to stochastic fluctuations induced by anomalous inputs from other areas of the brain \cite{del2016adaptation}. The results of this paper provide a means of computing 
estimates for the probability distribution of the bump's position as it wanders.
Similarly, the deviation in the speed of travelling waves in scalar reaction-diffusion equations has been under investigation since at least the 90s \cite{garcia2012noise,MS95}, though the question has only recently been addressed in systems of equations and for pulse type travelling waves, rather than just fronts in single equations \cite{EGK20,K20}. 

There is a well-established literature on the reduction of stochastic partial differential equations to finite-dimensional systems \cite{duan2014effective}. In the mathematical literature, most of the works influencing the present study reduce the dynamics of \eqref{eq:SDE}~to the \emph{variational phase}, rather than the isochronal phase. 
The variational phase is a map $\beta :\Gamma_\delta\rightarrow\mathcal{S}$ defined by minimizing $x\mapsto\norm*{x-\gamma_{\beta(x)}}_0$ for some semi-norm $\norm*{\cdot}_0$ on $H$ (a more explicit expression of a particular variational phase is given in Appendix \ref{sec:IsoVar}). 
It is well-known that as long as $x$ is close enough to the stable manifold, $\beta$ is always uniquely defined \cite[Lemma 4.3.3]{KP13}.

The variational phase was first developed for the study of orbital stability of traveling waves and patterns in deterministic systems -- see for instance \cite{cazenave1982orbital,pego1994asymptotic}, and the reviews in \cite{bievre2015orbital,KP13,volpert1994traveling}. 
Many researchers have worked to generalize the theory of orbital stability to a stochastic setting \cite{armero1998ballistic,BW12,LGNS04,IM16,M22}. 
Later, the variational phase was used in the context of tracking the position of travelling pulses in neural field equations by Inglis \&~MacLaurin \cite{IM16}, Lang \cite{L16}, and Lang \&~Stannat \cite{LS16}. 
Meanwhile, in reaction-diffusion systems, the variational phase has been used by Inglis \&~MacLaurin \cite{IM16}, Eichinger, Gnann, \&~Kuehn \cite{EGK20}, Kr{\"u}ger \&~Stannat  \cite{KS14}, \cite{KS17}, and Stannat \cite{S13}~to study travelling pulses. 
The variational phase has also been used by Kuehn, MacLaurin, \&~Zucal \cite{KMZ22}~to study stochastic perturbations of rotating spirals in reaction-diffusion systems. 

In the works \cite{IM16}, \cite{M22} and \cite{KMZ22} the time up to which the variational phase is well-defined is estimated by studying the time that it takes the system to escape the attracting manifold. Theorem B, above, translates this result to the isochronal phase using the arguments of Appendix \ref{sec:IsoVar}. 

A novel phase reduction method, similar in many respects to the variational phase, but distinct in a few technicalities, was developed by Hamster \&~Hupkes \cite{HH19, HH20a, HH20b}. 
Using this phase, Hamster \cite{H20}~makes predictions about the asymptotic speed of stochastic travelling waves in the  Fitzhugh-Nagumo system, though this speed is not proven to exist in any sense. 
The predictions of \cite{H20}~are in line with our Theorem \ref{thm:ApproximateErgodic}, which should hold for these waves on sufficiently large periodic spatial domains. 

Returning to the isochronal phase introduced in \cite{G75,W74}, we remark that it has been used in many past works to study the effect of noise on finite-dimensional oscillators \cite{wilson2016isostable}. 
For instance, the isochronal phase has been used to understand phenomena such as stochastic synchronization \cite{pikovsky2000phase,teramae2009stochastic}. 
Giacomin, Poquet, \&~Shapira \cite{GPS18}~obtain accurate estimates for the long time phase-shift of a general oscillatory system perturbed by small noise. 
Indeed our Corollary \ref{cor:Spheres} can be seen as a generalization of their Theorem 2.6 to an infinite dimensionsal SDE. 
Cao, Lindner, \&~Thomas \cite{CLT20}~and Cheng \&~Qian \cite{CQ21}~similarly study the long time behaviour of stochastic oscillators using the isochronal phase in the small noise regime, and obtain results in line with the present study.  See also the first author's work \cite{A22}, where results similar to \cite{GPS18}~were obtained using the theory of quasi-ergodic measures \cite{CMSM13}. 

Other studies which use the isochronal phase to study spatio-temporal patterns in SPDE include Bertini, Brassesco, \&~Butt{\`a}~\cite{BBB08}, Brassesco, De Masi, \&~Presutti \cite{BDMP95}~and Brassesco, Butt{\`a}, De Masi, \&~Presutti \cite{BBDMP98}. Nakao \cite{nakao2014phase} employs a linearization of the isochronal phase to study the effect of noise on spatially-extended oscillations over finite timescales. Recently, Lucon and Poquet \cite{luccon2022existence,luccon2021periodicity} have performed an extensive analysis of mean-field interacting particle systems in the large size limit. They obtain a phase reduction (analagous to the one in this paper), and also obtain long-time ergodicity results that characterize the drift of the empirical measure.

Parallel to the study of the isochronal \&~variational phase is a theory for the projection of SPDE onto slow manifolds. 
This theory has been developed by Funaki, Bl{\"o}mker, Hairer and co-workers \cite{Bl2008,blomker2005amplitude,funaki1995scaling}~(see also the work \cite{katzenberger1990solutions}, that projects a finite-dimensional SDE onto a slow manifold). 
For applications of this technique in biology, see the review by Parsons \&~Rogers \cite{parsons2017dimension}.

We also refer to alternative phase reduction methods developed in the physics literature, usually for finite dimensional oscillators, though the definitions could easily be lifted to a Hilbert space setting. 
For instance, \cite{B17, teramae2009stochastic, YA07}, define phase maps by referring to the unperturbed (deterministic) oscillator. 
Meanwhile, \cite{SP10a, SP13, SPKR12}, attempt to account for the noise-induced fluctuations of an oscillator when defining a phase map. 
Few of the phase reduction methods developed in these studies are suitable for addressing stability or ergodicity properties of stochastically perturbed patterns. 

One might also compare the results we obtain in \eqref{eq:SDE}~under the assumptions of Section \ref{sec:Setup}~with the study of travelling fronts in stochastic FKPP equations \cite{MS95}. 
In these systems, Mueller, Mytnik, \&~Quastel \cite{MMQ11}~proved that the addition of small amplitude noise has a large effect on the asymptotic speed of travelling waves in FKPP type equations, verifying the so called Brunet-Derrida conjecture of \cite{BD01}. 
See also Mueller, Mytnik, \&~Rhyzik \cite{MMR21}. 
However, the techniques used to study these equations do not extend to the systems we study, based as they are on a maximum principal that does not hold for \eqref{eq:SDE}~in general. 
Conversely, our techniques do not presently extend to travelling fronts in stochastic FKPP equations, as we require the diffusion coefficient $B$ in \eqref{eq:SDE}~to be differentiable. 

Finally, we remark that our methods may apply to the study of the metastability of breathers under perturbation. 
Breathers are known to occur in neural field equations~\cite{F04}~and both continuous and discrete nonlinear Schr{\"o}dinger equations \cite{FG08}. 
In Caballero \&~Wayne \cite{CW21}~and Eckmann \&~Wayne \cite{EW18, EW20}, the metastability of these breathers when perturbed by a damping force has been studied. 
Indeed, in \cite{CW21}~it has been conjectured that perturbations by damping are formally related to perturbations by noise. 
However, applying our results to this setting would require further work.

\section{Setup} 
\label{sec:Setup}
We now state the assumptions imposed on \eqref{eq:SDE} and \eqref{eq:ODE}. 
Throughout the document, we consider the dynamics of \eqref{eq:ODE}~to take place on a spatial domain $O$, which in many applications is bounded or periodic. 
Unique, global-in-time solutions to \eqref{eq:ODE}~are known to exist in a Hilbert space $H$ of functions $O\rightarrow\R^n$ under the following assumptions. The norm and inner product on $H$ are written as (respectively) $\norm{\cdot}$ and $\langle \cdot, \cdot \rangle$.

\begin{enumerate}[{\bf A.}]
\item $N$ is a nonlinearity defined on some $E\subset H$ such that $E$ is a Banach space with norm $\norm*{\cdot}_E$, and the embedding $E\hookrightarrow H$ is continuous. 
We moreover assume that $N$ is three-times continuously Fr{\'e}chet differentiable in the topology of $E$.  That is, for any $x\in E$ we denote by $DN(x): E \to E$, $D^2N(x):  E \times E \to E$ and $D^3 N(x):E \times E \times E \to E$  its (respective) first, second and third Fr{\'e}chet derivatives at $x$, and these are all assumed to be bounded operators. 
Assume that there exists an orthonormal basis $\{e_k\}_{k\in\N}$ of $H$ such that $e_k\in E$ for each $k\in\N$. 
\item $A$ is a linear operator with a domain of definition $D(A) \subset E$ that is dense in $H$, and generates a $C_0$-semigroup $(\Lambda_t)_{t\ge0}$ on $H$. 
Moreover, letting $\norm*{\cdot}$ denote the norm of $H$, there exists $\omega<0$ such that for all $t\ge0$, 
\begin{equation}
\label{eq:Lambda}
\max\left\{\norm*{\Lambda_t},\, \norm*{\Lambda_t}_E\right\}\,\le\,e^{\omega t}. 
\end{equation}
\end{enumerate}
We wish to avoid unnecessary technicalities resulting from the differential geometry. Indeed for almost every application of this theory that we can think of, the stable manifold $\Gamma$ is very regular (it usually corresponds to a continuous group action, such as translation and / or rotation \cite{KP13}). We will thus assume that $\Gamma$ can be parameterized by a compact subset $\mathcal{S}$ of $\mathbb{R}^m$. In this way, when we describe the induced stochastic dynamics on $\mathcal{S}$, we can avoid introducing co-ordinate charts and we can instead simply consider the induced phase dynamics as a $\mathbb{R}^m$-valued stochastic process.

\begin{enumerate}[{\bf C.}] 
\item We assume that the deterministic system \eqref{eq:ODE}~has a stable normally hyperbolic invariant manifold $\Gamma \subset E$ that can be smoothly parameterized by a smooth finite dimensional manifold $\mathcal{S}\subset\R^m$, where $m\in\N$. We write $\Gamma=\{\gamma_\alpha\}_{\alpha\in\mathcal{S}}$. $\Gamma$ is assumed to be in the domain of $A$. We assume that the tangent space $T_{\alpha}\mathcal{S} \subset \mathbb{R}^m$ of $\mathcal{S}$ is $(m-1)$-dimensional at every point $\alpha$. Let $\lbrace \mathfrak{v}_{\alpha,j} \rbrace_{1\leq j \leq m} \subset \mathbb{R}^m$ be an orthonormal basis of $\mathbb{R}^m$ such that the first $(m-1)$ vectors span $T_{\alpha}\mathcal{S} $. That is, writing $ \mathfrak{v}_{\alpha,j} = ( \mathfrak{v}^a_{\alpha,j})_{1\leq a \leq m}$,
\begin{equation}
T_{\alpha}\mathcal{S} = \big\lbrace y \in \mathbb{R}^m : \sum_{a=1}^m y^a \mathfrak{v}^a_{\alpha,m} = 0 \big\rbrace . \label{eq: tangent hyperplane}
\end{equation}
Suppose that $( \alpha_{\epsilon})_{\epsilon \geq 0} \in \mathcal{S}$ is such that $\lim_{\epsilon \to 0^+}\epsilon^{-1}\lbrace \alpha_{\epsilon} - \alpha \rbrace = \mathfrak{v}_{\alpha,i}$. Then write $\psi_{\alpha}^i \in H$ to be such that
\begin{align}
\psi_{\alpha}^i = \lim_{\epsilon \to 0^+} \epsilon^{-1}\lbrace \gamma_{\alpha_{\epsilon}  } - \gamma_{\alpha} \rbrace
\end{align}
The smoothness assumptions imply that all higher-order derivatives exist as well.
\end{enumerate}

In order that we may precisely specify the stability properties of the manifold $\Gamma$, we require some assumptions on the spectrum of the linearization about $\Gamma$. Because the manifold $\Gamma$ is invariant under the flow, we cannot assume that the linearized dynamics is contracting in directions tangential to the manifold. We must thus employ a more general notion of stability \cite{BLZ00}. The following definition will apply to solutions to partial differential equations that are invariant under a continuous group action (see the many examples in \cite{KP13,Schneider2017}), and also to rotating waves \cite{KMZ22}. To this end, for $\alpha \in \mathcal{S}$, let $\mathcal{L}_{\alpha} \in L\big( D(A) ,H \big)$ be the linearization about $\gamma_{\alpha}$, i.e.
\begin{align}
\mathcal{L}_{\alpha} = A + DN(\gamma_{\alpha}).
\end{align}
In order that we may correctly linearize the dynamics, we first require an expression for the flow restricted to the manifold $\mathcal{S}$ (In more detail: if the dynamics is periodic, for instance, then the linearization has to be with respect to coordinates that co-rotate with the limit cycle). 

Let the flow map of \eqref{eq:ODE}~be $(t,x)\,\mapsto\,\phi_t(x)$, and note that the flow can be expressed using Duhamel's formula as 
\begin{align}
\phi_t(x)\,=\, \Lambda_t x + \int_0^t \Lambda_{t-s}N(\phi_s(x))\,ds.  \label{eq: phi t x evolution operator}
\end{align}
Since the manifold $\Gamma$ is invariant under the flow, for all $t\geq 0$ there must exist an evolution operator $\tilde{\phi}_t: \mathcal{S} \to \mathcal{S}$ such that 
\begin{align}
\phi_t(\gamma_{\alpha}) = \gamma_{\tilde{\phi}_t(\alpha)}. \label{eq: tilde phi t definition}
\end{align}
We write $\tilde{N}: \mathcal{S} \to \mathbb{R}^m $ to be such that (defining $\tilde{\phi}_t(\alpha(0)) = \alpha(t)$),
\begin{align}
\frac{d\alpha^i}{dt} = \tilde{N}^i(\alpha(t)), \label{eq: tilde N definition}
\end{align}
and note that $\tilde{N}$ is continuous. Next we require an expression for the linearized dynamics about $\Gamma$. 
For some $\alpha \in \mathcal{S}$, write $U_{\alpha}(t) \in L(H,H)$ to be the (inhomogeneous) semigroup generated by $\mathcal{L}_{\tilde{\phi}_t(\alpha)}$. That is, for any $z$ in the domain of $A$, define $U_{\alpha}(t)z := x(t)$ where $x(t)$ is such that $x(0) = z$ and
\begin{align}
\frac{dx}{dt} =\mathcal{L}_{\tilde{\phi}_t(\alpha)}x(t). 
\end{align}
It is assumed that this definition can be continuously extended to define a semigroup in $L(H,H)$.

We are going to assume that the semigroup is contractive in all directions that are `normal' to the manifold $\Gamma$. To specify this more precisely, we define the projection operator $P_{\alpha}(t): H \mapsto H$ (for $t\geq 0$) to correspond to the projection of $U_{\alpha}(t)$ onto the non-decaying directions. Suppose that $\lbrace \psi_{\alpha}^{*,i} \rbrace_{1\leq i \leq m-1} \subset H$ is a set of linearly independent vectors in the kernel of $\mathcal{L}^*_{\alpha}$, and we define 
\begin{align}
P_{\alpha}(t) u &= \sum_{i=1}^n \langle \psi^{*,i}_{\alpha} , u \rangle \psi^{i}_{\tilde{\phi}_t(\alpha)}.
\end{align}
If $\Gamma$ were to consist entirely of fixed points, then the vectors could be chosen such that $P_{\alpha}(t)$ is the spectral projection operator (see \cite[4.1.7]{KP13}). Conversely if $\Gamma$ were the orbit of a stable limit cycle, then the vectors could be chosen such that $P_{\alpha}(t)$ is the Floquet projection matrix associated with the zero Floquet exponent. We require that the dynamics decays exponentially (and at a rate that is uniform over $\mathcal{S}$) towards $\Gamma$ in sufficiently small neighborhoods (except in directions parallel to the kernel of $\mathcal{L}_{\alpha}$). This is encapsulated by the following assumption.
\begin{enumerate}
\item[{\bf D.}] 
Define $V_{\alpha}(t) =  U_{\alpha}(t) - P_{\alpha}(t) $. We assume that there exist constants $C,b > 0$ such that for any $t \geq 0$, and any $u\in E$,
\begin{align} \label{eq: assumption exponential decay}
\sup_{\alpha\in \mathcal{S}}\norm{V_{\alpha}(t)}_E \leq & C \exp(-bt) \text{ and }\\
V_{\alpha}(t) \psi^{j}_{\alpha} =& 0 \text{ for all }1\leq j \leq m-1 \label{eq: orthonormal 0} \\
 \big\langle \psi^{*,j}_{\alpha} ,\psi^i_{\alpha} \big\rangle =& \delta(i,j) \label{eq: orthonormal} \\
\sup_{\alpha\in\mathcal{S}}\sup_{j\in \mathbb{N}}\big\lbrace \big| \big\langle \psi^{*,j}_{\alpha} , u \big\rangle \big| +  \big| \big\langle \psi^{j}_{\alpha} , u \big\rangle \big|  \big\rbrace \leq & C\norm{u}_E\label{eq: bound psi inner product} 
\end{align}
Furthermore it is assumed that there exists $\lbrace \lambda_{i} \rbrace_{i\geq 1} \subset \mathbb{R}^+$ such that for each $\alpha \in \mathcal{S}$, there exists an orthonormal basis (for $H$) $ \lbrace e_{\alpha,i} \rbrace_{i\geq 1} \subset E$, such that
\begin{align}
\langle e_{\alpha,i}, e_{\alpha,j} \rangle &= \delta(i,j) \label{eq: assumption basis 1} \\
\norm{V_{\alpha}(t) e_{\alpha,j}}_E &\leq C \exp\big(-t\lambda_{j} \big) \text{ for all }t\geq 0. \label{eq: assumption basis 3} 
\end{align}
We assume that $\alpha \mapsto e_{\alpha,i}$ is smooth, and that $\alpha \to \psi^{j}_{\alpha}$ and $\alpha \to \psi^{*,j}_{\alpha}$ are both three times continuously differentiable. (Note that, in practice, one would satisfy the constraints of this assumption by first obtaining bounds on the spectra of $L_{\alpha}$, and then applying the Gram-Schmidt algorithm to the corresponding eigenvectors).
\end{enumerate}


\begin{Def} \label{Definition Isochronal Phase Map}
It turns out that Assumptions {\bf C} and {\bf D}~imply that for each $x\in B(\Gamma)$ there is a unique $\pi(x)\in\mathcal{S}$ such that 
\begin{equation}
\label{eq:piDef}
\norm*{\phi_t(x)-\phi_t(\gamma_{\pi(x)})}_E \,\xrightarrow[t\rightarrow\infty]{}\,0, 
\end{equation} 
providing a well-defined map $\pi:B(\Gamma)\rightarrow\mathcal{S}$. 
See Theorem \ref{thm:C2}. 
We refer to $\pi$ as the \emph{isochron map}~of $\Gamma$. 
If \eqref{eq:SDE}~has a well-posed solution $(X_t)_{t\ge0}$ such that $X_t\in B(\Gamma)$ for some $t>0$, we refer to $\pi(X_t)$ as the \emph{isochronal phase}~of $X_t$. 
\end{Def}

Now, as noted in the introduction, we are concerned with stochastic perturbations of \eqref{eq:ODE}, and how they affect the stability and long-term behaviour of solutions in $B(\Gamma)$ in any of the cases listed above. 
Specifically, this paper concerns the stochastic evolution equation \eqref{eq:SDE}, 
written more concisely as 
\begin{equation}
dX_t\,=\,V(X_t)\,dt + \sigma B(X_t)\,dW_t. 
\end{equation}
The noise in \eqref{eq:SDE}~is defined to be a cylindrical Wiener process $W$ on $H$, so that it is uncorrelated in space and time. 
The case of spatially correlated noise can be treated by adding correlation via the diffusion coefficient $B$. 
We make the following assumptions on the noise, which holds for instance when $H=L^2(O;\R^n)$ for bounded $O$, and $B$ is polynomial or constant. 

\begin{enumerate}[{\bf A.}] 
\item [{\bf E.}] 
The multiplicative noise operator
\[
B: E \to \mathcal{L}( H, H) 
\]
is assumed to have the following properties. For a constant $C_B > 0$, and a locally square integrable mapping
$ t\to K(t)$, for all $s,t \geq 0$,
\begin{align*}
\norm{B(x) - B(y)}_E &\leq C_B \norm{x-y}_E \text{ for all }t\geq 0 \text{ and }x,y \in E \\
\norm{B(x)}_E &\leq C_B\\
\norm{P^A_t B(x)}_{HS} &\leq K(t)\\
\norm{P^A_t B(x) - P^A_t B(y)}_{HS} &\leq K(t)\norm{x-y}
\end{align*}
It is also assumed that for any orthonormal basis $\lbrace e_k \rbrace_{k\geq 1}$ of $H$,
\begin{align} \label{eq: bounded sum}
 \sup_{x\in \Gamma_{\delta}} \lim_{p\to\infty}\sum_{j \geq p}\sum_{k,l=1}^{\infty}(\lambda_k + \lambda_l)^{-1} \langle e_{\pi(x),k} , B(x) e_{j} \rangle \langle e_{\pi(x),l} , B(x) e_{j} \rangle  =0 .
\end{align}
\end{enumerate}
We note that in reaction diffusion systems on a compact one-dimensional domain, $\lambda_l \simeq l^2$, and therefore the boundedness of $B(x)$ would ensure that \eqref{eq: bounded sum} is satisfied. For reaction-diffusion systems on domains of higher dimension, further constraints on $B$ would be required in order that \eqref{eq: bounded sum} is satisfied.

\begin{lemma}
There exists a unique adapted stochastic process $(X_t)_{t\ge0}$ taking values in $C([0,\infty);E)$ such that 
\begin{equation}
\label{eq:Zmild}
X_t\,=\,\Lambda_tX_0+\int_0^t\Lambda_{t-s}N(X_s)\,ds + \sigma\int_0^t\Lambda_{t-s}B(X_s)\,dW_s. 
\end{equation}
The solution expressed by \eqref{eq:Zmild}~will henceforth be denoted $(X_t)_{t\ge0}$. 
\end{lemma}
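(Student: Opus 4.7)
The plan is to obtain the solution by a Banach fixed--point argument on $C([0,T];E)$, in the spirit of the standard existence theory for mild solutions of semilinear SPDE (as in Da Prato--Zabczyk). Define the map $\mathcal{T}$ on $C([0,T];E)$ by
\[
\mathcal{T}(X)_t \,=\, \Lambda_t X_0 \,+\, \int_0^t \Lambda_{t-s} N(X_s)\,ds \,+\, \sigma \int_0^t \Lambda_{t-s} B(X_s)\,dW_s.
\]
The first task is to check that $\mathcal{T}$ maps $C([0,T];E)$ into itself. The deterministic convolution $\int_0^t \Lambda_{t-s} N(X_s)\,ds$ is continuous in $E$ using Assumption B (which gives $\norm{\Lambda_t}_E \le e^{\omega t}$) together with the local Lipschitz property of $N$ on $E$, which follows from the boundedness of $DN(x)$ on $E$ in Assumption A.

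The stochastic convolution $Z_t(X) := \int_0^t \Lambda_{t-s} B(X_s)\,dW_s$ is the delicate term. Assumption E provides the Hilbert--Schmidt bound $\norm{\Lambda_t B(x)}_{HS} \le K(t)$ with $K$ locally square--integrable, so by It{\^o}'s isometry one immediately gets $Z_t \in L^2(\Omega;H)$ with second moment $\int_0^t K(t-s)^2\,ds$. To upgrade to a continuous $E$--valued modification, I would employ the stochastic factorization (Da Prato--Kwapie\'n--Zabczyk) by writing
\[
Z_t \,=\, \frac{\sin(\pi\alpha)}{\pi} \int_0^t (t-s)^{\alpha-1} \Lambda_{t-s} Y_s\,ds, \qquad Y_s := \int_0^s (s-r)^{-\alpha} \Lambda_{s-r} B(X_r)\,dW_r,
\]
for some $\alpha \in (0,1/2)$, and then using the smoothing of $\Lambda_t$ together with Assumption E to deduce that $t \mapsto Z_t$ is continuous in $E$ after a Kolmogorov--type argument.

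For contractivity, I would estimate, for $X,Y \in C([0,T];E)$,
\[
\norm{\mathcal{T}(X)_t - \mathcal{T}(Y)_t}_E \,\le\, \int_0^t e^{\omega(t-s)} \norm{DN}_{E,\infty} \norm{X_s - Y_s}_E\,ds \,+\, \sigma\, \norm{Z_t(X) - Z_t(Y)}_E,
\]
using Assumption E's Lipschitz bound on $B$ for the stochastic term (again via the factorization and $K(t)$). Since $\omega<0$ and $K$ is locally square integrable, choosing $T$ small enough (or using a weighted sup norm $\sup_{t\le T} e^{-\beta t}\norm{\cdot}_E$ with large $\beta$) yields a strict contraction on $C([0,T];E)$, producing a unique local mild solution. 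Extending to $[0,\infty)$ is then straightforward: the bound $\norm{B(x)}_E \le C_B$ precludes finite--time blow--up of the stochastic part, while the contractivity $\omega<0$ together with local Lipschitz control on $N$ allows the local solutions to be glued together by restarting the fixed--point argument on successive intervals.

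The step I expect to require most care is the $E$--valued continuity and Lipschitz dependence of the stochastic convolution. The bound in Assumption E is stated in the Hilbert--Schmidt norm on $H$, so transferring this to an $E$--regularity statement relies on the embedding $E \hookrightarrow H$ and the smoothing of $\Lambda_t$ combined with the factorization technique; verifying Kolmogorov's criterion for $E$--H\"older continuity of $Z_t(X)$ is the real technical content. The rest of the proof is essentially bookkeeping with Gr\"onwall's inequality and the contractive estimate \eqref{eq:Lambda}.
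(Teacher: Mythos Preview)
Your proposal is correct and takes essentially the same approach as the paper: a standard fixed-point argument in the style of Da Prato--Zabczyk, relying on the assumptions to make the stochastic convolution a well-defined $E$-valued process. In fact the paper's own proof is considerably terser than yours---it simply asserts that the assumptions make $\int_0^t \Lambda_{t-s} B(X_s)\,dW_s$ a well-defined $E$-valued martingale and then cites \cite{DPZ14} for the fixed-point argument, so your outline of the factorization method and the contractivity estimates is an elaboration rather than a deviation.
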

\begin{proof}
Our assumptions dictate that
\[
\int_0^t\Lambda_{t-s}B(X_s)\,dW_s
\]
is a well-defined $E$-valued martingale. One can then prove the existence of solutions to \eqref{eq:Zmild} using a standard fixed-point argument \cite{DPZ14}. 

\end{proof}

\subsection{Applications}


One important class of applications is when $\Gamma$ is a manifold of fixed points. In the study of pattern formation in partial differential equations, usually the patterned solution is not unique, and one can obtain another solution through translating and / or rotating the original solution. As such, one obtains a continuous manifold of solutions to the original partial differential equation \cite{KP13,Schneider2017}. The previous work \cite{M22} of one of these authors provides an extensive list of examples of such systems (including details of how to add space-time white noise). Another very important class of examples is when $\Gamma$ consists of time-periodic solutions, such as rotating spiral waves. See \cite{nakao2014phase} and \cite{KMZ22} for recent analyses of the effect of noise on such systems.

\begin{rmk}
\label{rmk:EX} 
Two classes of examples of \eqref{eq:ODE}~motivate the present discussion. 
The first is the class of reaction-diffusion equations of the form 
\begin{equation}
\label{eq:RDE}
\partial_tx\,=\,(\Delta) x  + N(x) + \xi_t, 
\end{equation}
where $a>0$. 

%

The second class of examples in which we are interested in is a class of integro-differential equations, with dynamics at $\xi\in O\subseteq \mathbb{R}^d$ described as 
\begin{equation}
\label{eq:NFE}
\begin{aligned}
\partial_tx(t,\xi)\,&=\, -x(t,\xi) +  \int_O \omega(\xi,\zeta)f(x(t,\zeta))\,d\zeta ,  \\
\partial_ty(t,\xi)\,&=\, -\varepsilon^{-1}y(t,\xi) + x(t,\xi). 
\end{aligned}
\end{equation}
where $\omega$ and $f$ are typically bounded and Lipschitz. These equations are frequently employed in neuroscience (where they are typically referred to as \emph{neural field equations})~\cite{B12}~and ecology \cite{patterson2020probabilistic}. 
\end{rmk}

\section{The Isochron Map \&~an It{\^o}~Formula}
\label{sec:Regularity}
In this section we prove the existence the isochron map, obtain some regularity properties, before using these to obtain an It{\^o}~formula for the isochronal phase $\pi(X_t)$ of the SPDE \eqref{eq:SDE}. The first half of this section defines the isochronal phase map in a neighbourhood about the manifold $\Gamma$ using a fixed-point argument. We then demonstrate that the fixed-point identity can be implicitly differentiated, allowing us to prove that the isochronal phase map is twice continuously differentiable. We can then prove that the second derivative of the isochronal phase map is trace class, which in turn allows us to use these results to determine an SDE for the dynamics of the isochronal co-ordinate. 

We start by precisely defining the isochronal phase map using a fixed point argument.
\begin{thm}
\label{thm:C2} 
The function $\pi:B(\Gamma)\rightarrow\mathcal{S}$ (as written in \eqref{eq:piDef}) is well-defined.
\end{thm}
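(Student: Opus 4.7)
The plan is to prove existence and uniqueness of $\pi(x)$ via a Lyapunov--Perron-type fixed point argument in a frame co-moving with $\tilde{\phi}_t$, leveraging the exponential contractivity of the $V_\alpha(t)$ part of the linearized semigroup guaranteed by Assumption~{\bf D}. Informally, I want to show that for $x$ sufficiently close to $\Gamma$, there is a unique $\alpha\in\mathcal{S}$ such that $x$ lies on the (local) strong stable manifold of the base trajectory $t\mapsto\gamma_{\tilde{\phi}_t(\alpha)}$; this $\alpha$ is then defined to be $\pi(x)$.

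First I would write the candidate deviation $v_t := \phi_t(x) - \gamma_{\tilde{\phi}_t(\alpha)}$, which by \eqref{eq: phi t x evolution operator} and \eqref{eq: tilde phi t definition} satisfies
\begin{equation*}
\partial_t v_t \,=\, \mathcal{L}_{\tilde{\phi}_t(\alpha)} v_t + R\bigl(v_t, \tilde{\phi}_t(\alpha)\bigr), \qquad v_0 = x - \gamma_\alpha,
\end{equation*}
where $R(v,\alpha') := N(\gamma_{\alpha'}+v) - N(\gamma_{\alpha'}) - DN(\gamma_{\alpha'})v$ is $O(\|v\|_E^2)$ by the $C^3$ Fr\'echet smoothness in~{\bf A}. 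Using the two-parameter evolution family built from $U_\alpha(\cdot)$, variation of constants yields
\begin{equation*}
v_t \,=\, U_\alpha(t)(x-\gamma_\alpha) + \int_0^t U_\alpha(t,s) R\bigl(v_s,\tilde{\phi}_s(\alpha)\bigr)\,ds.
\end{equation*}
Splitting $U_\alpha = V_\alpha + P_\alpha$ and demanding $\|v_t\|_E \to 0$ forces the tangent (non-decaying) component of $v_0$ to be determined by a tail integral against $P_\alpha$. This leads to the coupled Lyapunov--Perron system
\begin{align*}
v_t &\,=\, V_\alpha(t)(x-\gamma_\alpha) + \int_0^t V_\alpha(t,s)R(v_s,\tilde{\phi}_s(\alpha))\,ds - \int_t^\infty P_\alpha(t,s)R(v_s,\tilde{\phi}_s(\alpha))\,ds,\\
0 &\,=\, \sum_{j=1}^{m-1}\bigl\langle \psi^{*,j}_\alpha,\, x-\gamma_\alpha\bigr\rangle\psi^j_\alpha + \int_0^\infty P_\alpha(0,s)R(v_s,\tilde{\phi}_s(\alpha))\,ds,
\end{align*}
the second equation being the algebraic condition that selects $\alpha$ by demanding that the initial deviation, modulo the nonlinear correction, be normal to $T_\alpha\mathcal{S}$ in the sense of $\{\psi^{*,j}_\alpha\}$.

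I would then carry out the fixed-point argument on the Banach space $Y := \{v \in C([0,\infty);E): \|v\|_Y := \sup_t e^{bt/2}\|v_t\|_E < \infty\}$ coupled with a small neighbourhood in $\mathcal{S}$ of the variational phase $\beta(x)$. Shrinking the radius $\delta$ of $B(\Gamma)$ if necessary, the decay bound \eqref{eq: assumption exponential decay}, the bound \eqref{eq: bound psi inner product} on $\psi^{*,j}_\alpha$, the quadratic smallness of $R$ in $v$, and the three-times differentiability in Assumption~{\bf C} conspire to make the right-hand side of the display above a contraction on a small ball in $Y$, producing a unique decaying trajectory $v$ and a unique admissible $\alpha = \pi(x)$. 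Uniqueness of $\alpha$ globally (not only within the contraction ball) follows because, if $\alpha \neq \alpha'$ both satisfied \eqref{eq:piDef}, then $\|\gamma_{\tilde{\phi}_t(\alpha)} - \gamma_{\tilde{\phi}_t(\alpha')}\|_E \to 0$; but $\tilde{\phi}_t$ preserves $\mathcal{S}$ (either as the identity on a manifold of fixed points or as a rigid translation along a limit cycle, per the applications discussed after Assumption~{\bf D}), so by the injectivity and smoothness of $\alpha\mapsto \gamma_\alpha$ the two base trajectories remain uniformly separated in $E$, a contradiction.

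The main obstacle is the nonautonomous character of $U_\alpha(t,s)$: the decay estimate in~{\bf D} is stated for $V_\alpha(t)$ alone, and in order to make the Lyapunov--Perron contraction close I need analogous bounds for the propagator $V_\alpha(t,s)$ uniformly in $\alpha\in\mathcal{S}$, together with integrable tail control on $P_\alpha(t,s)$ acting on the remainder $R$. Obtaining these uniform-in-$\alpha$ estimates from \eqref{eq: assumption exponential decay}--\eqref{eq: bound psi inner product} is the technical core of the argument; once they are in hand, Lipschitz dependence of $R$ on $(v,\alpha)$ together with the smoothness assumptions on $\alpha\mapsto \psi^j_\alpha,\psi^{*,j}_\alpha$ closes the contraction and yields existence, uniqueness, and incidentally the continuity of $\pi$ on $B(\Gamma)$ that is needed as a stepping stone to the $C^2$ regularity claimed in Theorem~A.
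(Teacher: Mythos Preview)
Your proposal is correct and takes essentially the same approach as the paper: a Lyapunov--Perron fixed point argument on an exponentially weighted Banach space of trajectories, coupled with an $(m-1)$-dimensional algebraic constraint coming from the $P_\alpha$ (tangent) component, solved simultaneously for $(v,\alpha)$ by contraction when $\delta$ is small. The paper's version (adapted from \cite{volpert1994traveling}) uses the weight $e^{bt}$ with an auxiliary scaling parameter $\mu=\delta^{1/4}$ rather than your $e^{bt/2}$, and writes the propagator as $V_\alpha(t-s)$ directly from Assumption~{\bf D} rather than passing through a two-parameter family, but these are cosmetic differences; the contraction estimates and the structure of the coupled map $(\Lambda,\Pi_\kappa)$ are the same as what you outline.
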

Throughout this paper, we let $\Gamma_\delta$ denote the following $\delta$-neighbourhood of $\Gamma$, 
\[
\Gamma_\delta\,\coloneqq\,\left\{x\in E\,:\,\norm*{x-\gamma_{\pi(x)}}_E\le\delta\right\}  
\]
for $\delta>0$. 
Our assumptions dictate that the map $x\mapsto\phi_t(x)$ is three times Fr{\'e}chet differentiable in the topology of $E$ at any $x\in\Gamma_\delta$. 
The first, second, and third derivatives of the evolution operator $\phi_t$ at $x_0$ (as defined in \eqref{eq: phi t x evolution operator}) are denoted 
\begin{equation}
\label{eq:Derivatives}
x\,\mapsto\,D\phi_t(x_0)[x],\qquad (x,y)\,\mapsto\,D^2\phi_t(x_0)[x,y],\qquad (x,y,z)\,\mapsto\,D^3\phi_t(x_0)[x,y,z], 
\end{equation}
in directions $x,y,z\in E$.  We briefly note that these derivatives are well-defined.
\begin{lemma} \label{Frechet Derivative Evolution Operator}
For any $x_0 \in E$ and $y,z,w\in E$, there exists a unique $y \in \mathcal{C}([0,\infty), E)$ such that
\begin{align} \label{eq: y definition 0 0 }
y(t) = \Lambda_t y + \int_0^t \Lambda_{t-s} DN\big( \phi_s(x_0) \big)[y(s)] ds \\
\end{align}
Furthermore, $D\phi_t(x_0)[y] = y(t)$. Next, writing $x_s =  \phi_s(x_0)$, there exists a unique $a \in  \mathcal{C}([0,\infty), E)$ such that
\begin{align}
a(t) =&\int_0^t \Lambda_{t-s}\big\lbrace DN( x_s )[a(s)] + D^2N(x_s )\big[ D\phi_s(x)[y] ,  D\phi_s(x)[z]  \big] \big\rbrace ds,
\end{align}
and $a(0) = 0$. Furthermore,  $D^2\phi_t(x_0)[y,z] = a(t)$. Finally, there exists a unique $b \in \mathcal{C}([0,\infty), E)$  such that for all $t\geq 0$,
\begin{multline}
b(t) =\int_0^t \Lambda_{t-s}\bigg\lbrace DN( x_s )[b(s)] + D^2N(x_s )\big[ D^2\phi_s(x)[y,w] ,  D\phi_s(x)[z]  \big] + \\
+ D^2N(x_s )\big[ D\phi_s(x)[y] ,  D\phi_s(x)[w,z]  \big] + D^2N(x_s )\big[ D\phi_s(x)[y,z] ,  D\phi_s(x)[w]  \big] \\+ D^3N(x_s )\big[ D\phi_s(x)[y] ,  D\phi_s(x)[z], D\phi_s(x)[w]   \big] \bigg\rbrace ds,
\end{multline}
and $b(0) = 0$. Furthermore,  $D^3\phi_t(x_0)[y,z,w] = b(t)$.
\end{lemma}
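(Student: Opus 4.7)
The plan is to handle the three derivatives in order, each via the same two-step pattern: first solve the given integral equation by a Banach fixed-point argument in $\mathcal{C}([0,T],E)$, then identify the solution with the Fréchet derivative of $\phi_t$ by estimating a Taylor-type remainder.

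For the first derivative, I would fix $x_0 \in E$ and $y \in E$ and consider the map $\mathcal{T}: \mathcal{C}([0,T],E) \to \mathcal{C}([0,T],E)$ defined by the right-hand side of \eqref{eq: y definition 0 0 }. Assumption~{\bf A} says that $DN(\phi_s(x_0))$ is a bounded operator on $E$ with norm uniformly controlled on $s \in [0,T]$ (since $\phi_s(x_0)$ varies continuously in $E$ on compact time intervals), and Assumption~{\bf B} gives $\|\Lambda_{t-s}\|_E \le e^{\omega(t-s)} \le 1$. A standard Gronwall / iterated-kernel estimate on $\mathcal{T}^n$, or equivalently an exponentially weighted sup-norm $\sup_{t\le T} e^{-\kappa t}\|u(t)\|_E$ with $\kappa$ large, makes $\mathcal{T}$ a contraction and yields a unique fixed point $y(\cdot) \in \mathcal{C}([0,T],E)$; uniqueness across $T$'s lets us extend to $[0,\infty)$. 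To identify $y(t)$ with $D\phi_t(x_0)[y]$, I would write $r_\epsilon(t) := \phi_t(x_0 + \epsilon y) - \phi_t(x_0) - \epsilon y(t)$, use Duhamel \eqref{eq: phi t x evolution operator} for both $\phi_t$'s and the fixed-point equation for $y$, expand $N(\phi_s(x_0+\epsilon y)) - N(\phi_s(x_0))$ to first order using the continuous Fréchet differentiability of $N$, and obtain an integral inequality of the form $\|r_\epsilon(t)\|_E \le C\int_0^t \|r_\epsilon(s)\|_E\,ds + o(\epsilon)$ uniformly on $[0,T]$; Gronwall then gives $\|r_\epsilon(t)\|_E = o(\epsilon)$, which is exactly Fréchet differentiability.

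For the second derivative, I would differentiate (informally) the equation for $D\phi_t(x_0)[y]$ in $x_0$ in the direction $z$; this produces exactly the integral equation for $a(t)$ given in the statement, with the extra source term $D^2N(\phi_s(x_0))[D\phi_s(x_0)[y],D\phi_s(x_0)[z]]$. Since $D\phi_s(x_0)[y]$ and $D\phi_s(x_0)[z]$ are already known to lie in $\mathcal{C}([0,T],E)$ with controllable norms, this source term is a continuous $E$-valued forcing, and the same contraction argument as above (with the same linear operator $DN(\phi_s(x_0))$) produces a unique $a \in \mathcal{C}([0,\infty),E)$. Identification with $D^2\phi_t(x_0)[y,z]$ proceeds by another remainder estimate: write $R_\epsilon(t) := D\phi_t(x_0+\epsilon z)[y] - D\phi_t(x_0)[y] - \epsilon a(t)$, use the fixed-point identities for each of the three quantities, Taylor-expand $DN$ to first order, and apply Gronwall to get $\|R_\epsilon(t)\|_E = o(\epsilon)$. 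The third derivative $b(t)$ is handled identically, differentiating the equation for $a(t)$ in $x_0$ in the direction $w$; the four source terms in the statement arise from the product rule applied to $D^2N(\phi_s(x_0))[D\phi_s(x_0)[y],D\phi_s(x_0)[z]]$ and the chain-rule term $DN(\phi_s(x_0))[a(s)]$, together with the $D^3N$ term from differentiating $D^2N(\phi_s(x_0))$ itself.

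The main technical obstacle is the identification step rather than the fixed-point step: one must keep careful track of the norms of $D\phi_s$, $D^2\phi_s$, and the Taylor remainders of $N$, $DN$, $D^2N$ simultaneously on a possibly long time interval. The dissipativity bound $\omega < 0$ in \eqref{eq:Lambda} is what keeps the Gronwall constants from blowing up as $t \to \infty$, so existence in $\mathcal{C}([0,\infty),E)$ rather than merely $\mathcal{C}([0,T],E)$ rests on combining the contraction argument with this uniform decay of the semigroup. Beyond that, the proof is a routine iteration of the variation-of-constants formula, and in particular introduces no new ideas beyond those in a standard Da Prato--Zabczyk style treatment of smoothness of semilinear flows in Banach spaces.
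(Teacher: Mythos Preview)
Your proposal is correct and follows essentially the same approach as the paper: a fixed-point argument for existence/uniqueness of the linear variation-of-constants equation, followed by identification with the Fr{\'e}chet derivative via implicit differentiation of the Duhamel formula, iterated for the higher derivatives. The paper's own proof is considerably more terse (it extends from small time intervals by iteration rather than using a weighted norm, and dismisses the identification step and the higher-order cases as ``analogous''); note in particular that the dissipativity $\omega<0$ is not actually needed to pass from $\mathcal{C}([0,T],E)$ to $\mathcal{C}([0,\infty),E)$, since the small-interval iteration already gives global-in-time continuity (without any uniform bound).
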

\begin{proof}
For small enough $t$, the existence and uniqueness of \eqref{eq: y definition 0 0 } follows from a fixed point argument. One then considers the equation, for $r \geq t$,
\begin{align} \label{eq: y definition 0 0 0 }
y(r) = \Lambda_{r-t} y(t) + \int_t^r \Lambda_{r-s} DN\big( \phi_s(x_0) \big)[y(s)] ds.
\end{align}
Again, for $r-t$ small enough, a fixed point argument implies the existence of a unique $y(r)$. Continuing this argument, one finds a unique $y(r)$ satisfying \eqref{eq: y definition 0 0 0 } for all $r \geq t$. One can then see that, implicitly differentiating the expression in \eqref{eq: phi t x evolution operator},
$D\phi_t(x_0)[y] = y(t)$. The proofs of the other identities are analogous.
\end{proof}

It suffices to prove that there exists $\delta > 0$ such that the map $\pi$ is well-defined for any $x\in \Gamma_{\delta}$  (the closed $\delta$-blowup of $\Gamma$), since eventually the deterministic dynamics must always enter $\Gamma_{\delta}$. We prove the existence of the map $\pi$ by adapting the fixed point argument of \cite[Chapter 5]{volpert1994traveling}. To this end, define $\mathcal{V} \subset \mathcal{C}([0,\infty), E)$ to be the Banach Space of all functions such that
\begin{align}
\mathcal{V} &= \big\lbrace u \in  \mathcal{C}([0,\infty), E) \; : \; \sup_{t\geq 0}\norm{ u(t)}_E \exp(bt) < \infty \big\rbrace \text{ with norm } \\
\norm{u}_b &= \mu\sup_{t\geq 0} \norm{u(t) }_{E} \exp(bt) ,
\end{align}
for a constant $\mu > 0$, and $b > 0$ bounds the rate of decay of the linearization (as noted in Assumption D). Define the function $\Lambda: \mathcal{S} \times \mathcal{V} \to \mathcal{V}$ to be such that
\begin{align}
\Lambda(\alpha,v)(t) =& V_{\alpha}(t)(x - \gamma_{\alpha}) + \int_0^t V_{\alpha}(t-s)G(s,\alpha , v(s)) ds\nonumber \\ &- \sum_{j=1}^{m-1} \psi^j_{\tilde{\phi}_t(\alpha)} \int_t^{\infty} \big\langle  \psi^{*,j}_{\tilde{\phi}_s(\alpha)} , G(s,\alpha , v(s))   \big\rangle ds \text{ where }  \\
G(s,\alpha,v) =& N( v + \gamma_{\tilde{\phi}_s(\alpha)} ) - N(\gamma_{\tilde{\phi}_s(\alpha)}) - DN(\gamma_{\tilde{\phi}_s(\alpha)}) [ v]. 
\end{align}
For $1\leq j \leq m-1$ and for some $\kappa \in \mathcal{S}$, define the function $\Pi^j_{\kappa}: \mathcal{S} \times \mathcal{V} \to \mathbb{R}$ to be such that 
\begin{multline} \label{eq: H kappa definition}
\Pi_{\kappa}^j(\alpha,v) = \big\langle x- \gamma_{\kappa} , \psi^{*,j}_\alpha \big\rangle - \big\langle \gamma_{\alpha} - \sum_{i=1}^m (\kappa^i - \alpha^i) \psi^i_{\alpha}- \gamma_{\kappa} , \psi^{*,j}_\alpha  \big\rangle \\ +  \int_0^{\infty} \big\langle  \psi^{*,j}_{\tilde{\phi}_s(\alpha)} , G(s,\alpha , v(s))   \big\rangle ds . 
\end{multline}
It is assumed throughout this section that $\norm{x - \gamma_{\kappa}}_E \leq \delta$ for some $\delta > 0$.

\begin{thm} \label{Theorem existence isochronal}
As long as $\delta , \bar{\alpha} $ are sufficiently small, there exists a unique $\alpha \in \mathcal{S}$ with $\norm{\alpha-\kappa}\leq \bar{\alpha}$ and $v \in \mathcal{V}$ such that
\begin{align}
\Lambda(\alpha,v) =& v  \\
\Pi^j_{\kappa}(\alpha,v) =& \alpha^j - \kappa^j \text{ for all }1\leq j \leq m ,
\end{align}
and we define $\xi(x,\alpha,s) = v(s)$, and $\pi(x) = \alpha$. (The function $\pi: \Gamma_{\delta} \to \mathcal{S}$ is the isochronal phase map noted earlier in Definition \ref{Definition Isochronal Phase Map}). Furthermore, writing $u(t) =\xi(x,\alpha,t)  + \gamma_{\tilde{\phi}_t(\alpha)}$, it must be that
\begin{align}
u(t) = U_{\alpha}(t)u(0) + \int_0^t U_{\alpha}(t-s)G(s,\alpha , u(s) - \gamma_{\tilde{\phi}_s(\alpha)}) ds , \label{eq: to show u 1}
\end{align}
which is equivalent to
\begin{equation}
u(t) = \Lambda_t u(0) + \int_0^t \Lambda_{t-s}N(u(s)) ds. \label{eq: to show u 2}
\end{equation}
\end{thm}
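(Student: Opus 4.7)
The plan is to combine both equations $\Lambda(\alpha, v) = v$ and $\Pi^j_\kappa(\alpha, v) = \alpha^j - \kappa^j$ into a single fixed-point problem and apply Banach's contraction principle on a product ball, following the template of \cite[Chapter 5]{volpert1994traveling}. Concretely, set
\[
\mathcal{F}(\alpha, v) \,:=\, \bigl(\kappa + \Pi_\kappa(\alpha, v),\; \Lambda(\alpha, v)\bigr),
\]
viewed as a self-map of the closed ball $K_{\bar\alpha, r} := \{(\alpha, v) \in \mathcal{S} \times \mathcal{V} : |\alpha - \kappa| \le \bar\alpha,\; \|v\|_b \le r\}$. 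A fixed point of $\mathcal{F}$ furnishes exactly the pair asserted in the theorem, and we set $\pi(x) := \alpha$ and $\xi(x,\alpha,t) := v(t)$.

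The two structural facts driving every estimate are: (i) Assumption~{\bf D} gives $\|V_\alpha(t)\|_E \le C e^{-bt}$ uniformly in $\alpha$, together with $|\langle \psi^{*,j}_\alpha, u\rangle| \le C\|u\|_E$ and smoothness of $\alpha\mapsto \psi^j_\alpha,\psi^{*,j}_\alpha,\gamma_\alpha,\tilde\phi_s(\alpha)$; (ii) since $G(s,\alpha,v)$ is the first-order Taylor remainder of $N$ at $\gamma_{\tilde\phi_s(\alpha)}$, Assumption~{\bf A} gives the quadratic bound $\|G(s,\alpha,v)\|_E \le C_N\|v\|_E^2$ and the Lipschitz bound $\|G(s,\alpha,v)-G(s,\alpha,v')\|_E \le C_N(\|v\|_E+\|v'\|_E)\|v-v'\|_E$. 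Using the exponentially weighted norm to convolve these quadratic bounds against $V_\alpha(t)$ (and choosing the constant $\mu$ large enough to absorb the tail integral $\int_t^\infty$ in the definition of $\Lambda$) yields
\[
\|\Lambda(\alpha,v)\|_b \,\le\, C_1\delta + C_2\|v\|_b^2, \qquad |\Pi_\kappa(\alpha,v)| \,\le\, C_3\delta + C_4\bigl(|\alpha-\kappa|^2 + \|v\|_b^2\bigr).
\]
The quadratic $|\alpha-\kappa|^2$ term is precisely the remainder of Taylor expanding $\gamma_\kappa-\gamma_\alpha$ about $\alpha$, which is subtracted out in the definition of $\Pi^j_\kappa$. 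Choosing $r$ and $\bar\alpha$ of order $\delta$ with $\delta$ small makes $\mathcal{F}$ a self-map; entirely analogous estimates on differences $\mathcal{F}(\alpha,v)-\mathcal{F}(\alpha',v')$ yield a Lipschitz constant of order $r+\bar\alpha+\delta$, hence $<1$, giving a unique fixed point.

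The main obstacle is the second half of the theorem: deriving \eqref{eq: to show u 1} from the fixed-point identity and then matching it to \eqref{eq: to show u 2}. Setting $u(t)=v(t)+\gamma_{\tilde\phi_t(\alpha)}$ and substituting $V_\alpha = U_\alpha - P_\alpha$ into $v = \Lambda(\alpha,v)$ produces an $U_\alpha$-Duhamel expression plus residual projection terms of the form $P_\alpha(t)(x-\gamma_\alpha)$, $\int_0^t P_\alpha(t-s) G\,ds$, and the explicit $\int_t^\infty\langle\psi^{*,j}_{\tilde\phi_s(\alpha)},G\rangle\,ds$ already built into $\Lambda$. Using that $P_\alpha(t-s)G = \sum_j \langle \psi^{*,j}_{\tilde\phi_s(\alpha)},G\rangle\,\psi^j_{\tilde\phi_t(\alpha)}$ and splitting the integral as $\int_0^\infty - \int_t^\infty$, these projection residuals collapse. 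The constraint $\Pi^j_\kappa(\alpha,v)=\alpha^j-\kappa^j$ is precisely what is needed so that, after also expanding $\gamma_\kappa-\gamma_\alpha$ to second order along the $\psi^j_\alpha$ basis, the remaining $P_\alpha(t)$-pieces assemble into $U_\alpha(t)u(0)$ modulo the $U_\alpha$-Duhamel integral of $G$, yielding \eqref{eq: to show u 1}.

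Finally, \eqref{eq: to show u 1} $\Leftrightarrow$ \eqref{eq: to show u 2} is a Duhamel calculation: $U_\alpha(t)$ is by definition the mild solution operator for the time-inhomogeneous linear equation $\dot w = \mathcal{L}_{\tilde\phi_t(\alpha)}w$, so
\[
U_\alpha(t)z \,=\, \Lambda_t z + \int_0^t \Lambda_{t-s}\,DN(\gamma_{\tilde\phi_s(\alpha)})\,U_\alpha(s)z\,ds.
\]
Substituting this into \eqref{eq: to show u 1}, applying Fubini to the resulting iterated integral, and using $G(s,\alpha,u-\gamma_{\tilde\phi_s(\alpha)}) = N(u) - N(\gamma_{\tilde\phi_s(\alpha)}) - DN(\gamma_{\tilde\phi_s(\alpha)})[u-\gamma_{\tilde\phi_s(\alpha)}]$ together with the mild identity $\gamma_{\tilde\phi_t(\alpha)} = \Lambda_t\gamma_\alpha + \int_0^t\Lambda_{t-s}N(\gamma_{\tilde\phi_s(\alpha)})\,ds$ makes the $DN(\gamma_{\tilde\phi_s(\alpha)})$ cross terms cancel exactly, leaving $u(t) = \Lambda_t u(0) + \int_0^t \Lambda_{t-s}N(u(s))\,ds$. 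This algebraic cancellation, together with the projection-residual bookkeeping in the previous paragraph, is the delicate part of the argument.
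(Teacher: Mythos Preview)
Your proposal follows essentially the same route as the paper: a Banach contraction argument on the product space $\mathcal{S}\times\mathcal{V}$, following \cite{volpert1994traveling}, using the quadratic bound on the Taylor remainder $G$ and the exponential decay of $V_\alpha(t)$ from Assumption~{\bf D}. The paper sets up the same self-map and claims the same type of bounds on $\Pi_\kappa$, $\Lambda$, and their differences.

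One point of divergence worth flagging: you take $\mu$ to be a fixed large constant and $r,\bar\alpha$ of order $\delta$, whereas the paper chooses the coupled scaling $\mu=\delta^{1/4}$, $r=\mu^3=\delta^{3/4}$. The paper's Lipschitz estimate for $\Lambda$ in the $\alpha$-variable contains a term of order $\mu$ (not $\mu\delta$), which would fail to be small under your scaling; with $\mu=\delta^{1/4}$ it is harmless. You should check carefully whether this bare-$\mu$ contribution is genuine (it appears to arise from $V_\beta(t)(\gamma_\beta-\gamma_\alpha)$ and related cross terms), and if so, adopt a scaling that sends $\mu\to 0$ with $\delta$. Your treatment of the second half --- the projection-residual cancellation yielding \eqref{eq: to show u 1}, and the Duhamel rewriting to \eqref{eq: to show u 2} --- is actually more explicit than the paper's, which simply asserts that the fixed-point identities imply the Duhamel formulas after reducing to $\Xi^j(\pi(x),x)=0$.
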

\begin{proof}
The proof is a very straightforward adaptation of \cite[Theorem 1.1]{volpert1994traveling}. Writing $\mu = \delta^{1/4}$, $r= \mu^3$, consider the closed subset $\tilde{\mathcal{V}}_{\kappa,r,\bar{\alpha}}$ of the Banach Space $\mathcal{S} \times \mathcal{V}$ such that
\begin{align}
\tilde{\mathcal{V}}_{\kappa,r,\bar{\alpha}} = \big\lbrace (\alpha,v) \in \mathcal{S} \times \mathcal{V} : \norm{v}_{b} \leq r \; \; , \; \; \norm{\alpha - \kappa} \leq \bar{\alpha}  \big\rbrace .
\end{align}
Next we claim that there exists a constant $c > 0$ (independent of the other constants) such that for any $(\alpha,v) , (\beta,w) \in \tilde{\mathcal{V}}_{\kappa,r}$, 
\begin{align}
\norm{\Pi_{\kappa}(\alpha,v)} &\leq c \big( \delta + \bar{\alpha}^2 + r^{2}\mu^{-2} \big) \\
\norm{\Lambda(\alpha,v)}_b &\leq c \mu  \big( \delta + \bar{\alpha}^2 + r^{2}\mu^{-2} \big) \\
\norm{\Pi_{\kappa}(\alpha,v) - \Pi_{\kappa}(\beta,w)} &\leq c \big( \delta + \bar{\alpha}^2 + \bar{\alpha} + r\mu^{-1}  + r^2 \mu^{-2} \big)\norm{\alpha-\beta} + c r\mu^{-2}\norm{v-w}_{b} \\
\norm{ \Lambda(\alpha,v) -  \Lambda(\beta,w)}_b &\leq c \big( \delta \mu+ \bar{\alpha} \mu + \mu+ r^2 \mu^{-1}  + r  \big)\norm{\alpha-\beta} + c r\mu^{-1}\norm{v-w}_{b}.
\end{align}
Observe that, for instance, writing $K > 0$ to be a uniform upperbound for $D^2 N$, for $0\leq t \leq \infty$,
\begin{align}
\left\| \int_0^t G(s,\alpha,v) ds \right\|_E &\leq \int_0^t K \norm{v(s)}^2_E ds \nonumber \\ &\leq  \sup_{s\geq 0} \big\lbrace \exp(2bs) \norm{v(s)}^2_E \big\rbrace \int_0^t K \exp(-2bs)ds 
\leq \mu^{-2} \big(\norm{v}_b \big)^2 \frac{K}{2b} \leq \frac{\delta K}{2b } \label{eq: bound integral G}
\end{align}
Observe that if one chooses the constants such that, then if $(\alpha,v) \in \tilde{\mathcal{V}}_{\kappa,r,\bar{\alpha}} $, then necessarily
\begin{align}
\big( H_{\kappa}(\alpha,v),\Lambda(\alpha,v) \big) \in \tilde{\mathcal{V}}_{\kappa,r,\bar{\alpha}}.
\end{align}
Furthermore for a constant $\zeta < 1$, 
\begin{align}
\norm{\Pi_{\kappa}(\alpha,v) - \Pi_{\kappa}(\beta,w)} + \norm{ \Lambda(\alpha,v) -  \Lambda(\beta,w)}_b \leq \zeta \big\lbrace \norm{\alpha-\beta} + \norm{v-w}_b \big\rbrace .
\end{align}
It thus follows from the contraction mapping principle that there exists a unique $(\alpha,v) \in \tilde{\mathcal{V}}_{\kappa,r,\bar{\alpha}}$ such that
\begin{align}
\alpha^i - \kappa^i= \Pi^i_{\kappa}(\alpha,v) \text{ and } v= \Lambda(\alpha,v) .
\end{align}
Next, rearranging terms, and using the identity in \eqref{eq: orthonormal 0}, we find that it must be that
\begin{align}
\Xi^j( \pi(x) , x) =& 0 \text{ for all }1\leq j \leq m-1, \text{ where }\\
\Xi^j(\alpha,x) :=&  \big\langle  \psi^{*,j}_{ \alpha} ,  x - \gamma_{\alpha} \big\rangle + \int_0^{\infty}\big\langle  \psi^{*,j}_{\tilde{\phi}_s(\alpha)}, G(s,\alpha , \xi(x,\alpha,s) )  \big\rangle ds  \\
 =&   \big\langle  \psi^{*,j}_{ \alpha} ,  x - \gamma_{\alpha} \big\rangle + \int_0^{\infty}\big\langle  \psi^{*,j}_{\tilde{\phi}_s(\alpha)}, \tilde{G}(s,\alpha , x )  \big\rangle ds \text{ where } \label{eq: Xi j definition}\\
 \tilde{G}(s,\alpha,x ) =& N\big( \phi_s(x) \big)- DN\big(\gamma_{\tilde{\phi}_s(\alpha)}\big)\big[ \phi_s(x) - \gamma_{\tilde{\phi}_s(\alpha)} \big] -  N\big(\gamma_{\tilde{\phi}_s(\alpha)}\big)
\end{align}
These identities imply that $u(t) := \gamma_{\tilde{\phi}_t(\alpha)} + v(t)$ must satisfy \eqref{eq: to show u 1} and \eqref{eq: to show u 2}.
\end{proof}
The following corollary is immediate.
\begin{cor}\label{Cor isochronal phase invariant}
For all $x\in \Gamma_{\delta}$ and all $t\geq 0$,
\begin{equation}
\pi\big( \phi_t(x) \big) = \tilde{\phi}_t(x),
\end{equation}
where $\tilde{\phi}_t$ is the flow operator restricted to the manifold $\Gamma$ (as defined in \eqref{eq: tilde phi t definition}).
\end{cor}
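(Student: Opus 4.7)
The plan is to derive the identity directly from the defining property \eqref{eq:piDef} of the isochron map together with the semigroup property of the deterministic flow $\phi_t$. Note that, as stated, the corollary contains a minor typo: $\tilde{\phi}_t$ is defined on $\mathcal{S}$ in \eqref{eq: tilde phi t definition}, so the right-hand side should be understood as $\tilde{\phi}_t(\pi(x))$. With this reading, the statement is precisely the equivariance of $\pi$ with respect to the flow.

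First, I would fix $x \in \Gamma_\delta$ and $s \geq 0$, and verify that $y := \phi_s(x)$ still lies in $\Gamma_\delta$. This is exactly where one uses the fixed-point output of Theorem \ref{Theorem existence isochronal}: the component $\xi(x,\pi(x),\cdot) = v \in \mathcal V$ satisfies $\|v(t)\|_E \leq \mu^{-1} r \exp(-bt)$, and since $u(t) = \phi_t(x) = \gamma_{\tilde\phi_t(\pi(x))} + \xi(x,\pi(x),t)$, one obtains $\|\phi_t(x) - \gamma_{\tilde\phi_t(\pi(x))}\|_E \le \mu^{-1} r \exp(-bt) \leq \delta$ for all $t\ge 0$. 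In particular $y\in \Gamma_\delta$, so $\pi(y)$ is well-defined.

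Next, I would set $\alpha := \tilde\phi_s(\pi(x)) \in \mathcal S$ and show that $\alpha$ satisfies the defining asymptotic condition \eqref{eq:piDef} for $y$. Using the semigroup property $\phi_t\circ\phi_s = \phi_{t+s}$ and the defining identity \eqref{eq: tilde phi t definition}, we have
\begin{align*}
\phi_t(y) - \phi_t(\gamma_{\alpha})
&= \phi_{t+s}(x) - \phi_t\bigl(\gamma_{\tilde\phi_s(\pi(x))}\bigr) \\
&= \phi_{t+s}(x) - \phi_{t+s}(\gamma_{\pi(x)}).
\end{align*}
By the very definition of $\pi(x)$, the $E$-norm of this difference tends to $0$ as $t\to\infty$.

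Finally, uniqueness of the isochronal phase (the uniqueness clause of Theorem \ref{Theorem existence isochronal}, which characterizes $\pi(y)$ as the unique element of $\mathcal{S}$ making $\|\phi_t(y) - \phi_t(\gamma_{\pi(y)})\|_E\to 0$) forces $\pi(y) = \alpha = \tilde\phi_s(\pi(x))$, which is the claim. The only subtle point is ensuring that $\alpha$ lies in the small neighbourhood of some admissible $\kappa\in\mathcal S$ in which the fixed-point construction of Theorem \ref{Theorem existence isochronal} guarantees uniqueness; this is handled by taking $\kappa = \tilde\phi_s(\pi(x))$ itself, so that $\|\alpha-\kappa\|=0$ trivially fits within $\bar\alpha$, and the companion $v$-coordinate is $\xi(y,\alpha,\cdot)$, whose smallness is inherited from the exponential decay above. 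I do not anticipate a serious obstacle: the whole argument is the infinite-dimensional analogue of the standard fact that the classical isochron map of Guckenheimer is equivariant under the flow, and everything one needs has already been set up in Theorem \ref{Theorem existence isochronal}.
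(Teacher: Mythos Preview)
Your argument is correct and is precisely the reasoning the paper has in mind: the paper gives no explicit proof and simply declares the corollary ``immediate'' after Theorem \ref{Theorem existence isochronal}, relying on exactly the semigroup identity $\phi_t\circ\phi_s=\phi_{t+s}$ together with the uniqueness of $\pi$ from \eqref{eq:piDef}. Your observation about the typo (the right-hand side should read $\tilde{\phi}_t(\pi(x))$) is also correct.
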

Our next goal is to determine expressions for the first and second Fr{\'e}chet Derivatives of the function $\pi$. These are essential in order that we may obtain an SDE for $\pi(X_t)$. First, we prove that the linearization can be made with respect to the isochronal phase, without a significant loss of accuracy.
\begin{lemma} \label{Lemma Frechet Derivative Flow Map}
There exists $\tilde{\delta} \leq \delta$ ($\delta$ is stipulated in Theorem \ref{Theorem existence isochronal}) such that for any $x\in \Gamma_{\tilde{\delta}}$, if $y(t)$ solves the following linear equation (for any $y\in E$ and any $t\geq 0$),
\begin{align} \label{eq: y definition}
y(t) = \Lambda_t y + \int_0^t \Lambda_{t-s} DN\big( \phi_s(x) \big)[y(s)] ds,
\end{align}
then necessarily for all $t \geq 0$,
\begin{align}
\norm{y(t)}_E \leq 2 \norm{U_{\pi(x)}(t) y}_E
\end{align}
Furthermore
\begin{equation}
D\phi_t(x)[y] = y(t).
\end{equation}
\end{lemma}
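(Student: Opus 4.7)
The plan is to compare $y(t)$, the solution of the linearization along the trajectory $(\phi_s(x))_{s \geq 0}$, with $U_{\pi(x)}(t) y$, the solution of the linearization along the companion manifold trajectory $(\gamma_{\tilde\phi_s(\pi(x))})_{s \geq 0}$. The identity $D\phi_t(x)[y] = y(t)$ is essentially a restatement of Lemma \ref{Frechet Derivative Evolution Operator} (where the Fr{\'e}chet derivative of $\phi_t$ was characterised as the unique solution to \eqref{eq: y definition 0 0 }), so all the real effort goes into establishing the norm estimate.

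The key ingredient, inherited from Theorem \ref{Theorem existence isochronal}, is the exponential bound
\begin{equation}
\norm{\phi_s(x) - \gamma_{\tilde\phi_s(\pi(x))}}_E \,\leq\, C \tilde\delta^{1/2} e^{-bs}, \qquad s \geq 0,\; x \in \Gamma_{\tilde\delta},
\end{equation}
which follows from that theorem since the contraction argument is carried out with $\mu = \tilde\delta^{1/4}$ and $r = \mu^3$. Writing the generator of $y$ as $\mathcal{L}_{\tilde\phi_t(\pi(x))} + R(t)$ with $R(t) := DN(\phi_t(x)) - DN(\gamma_{\tilde\phi_t(\pi(x))})$, I would apply Duhamel's formula with respect to the two-parameter semigroup $U_{\pi(x)}(t,s) := U_{\tilde\phi_s(\pi(x))}(t-s)$ to obtain
\begin{equation}
y(t) \,=\, U_{\pi(x)}(t)\, y + \int_0^t U_{\pi(x)}(t,s)\, R(s)\, y(s)\, ds.
\end{equation}
The boundedness of $D^2 N$ from Assumption A together with the displayed estimate yields $\norm{R(s)}_{E \to E} \leq C' \tilde\delta^{1/2} e^{-bs}$, while the decomposition $U_\alpha = P_\alpha + V_\alpha$ from Assumption D, combined with \eqref{eq: bound psi inner product} and the smoothness of $\alpha \mapsto \psi^j_\alpha$, gives $\sup_{0 \leq s \leq t} \norm{U_{\pi(x)}(t,s)}_{E \to E} \leq C''$ uniformly in $\alpha$. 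A standard Gronwall iteration then produces an additive estimate of the form $\norm{y(t) - U_{\pi(x)}(t) y}_E \leq c(\tilde\delta)\, \norm{y}_E$ with $c(\tilde\delta) \to 0$ as $\tilde\delta \to 0$.

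The main obstacle is upgrading this additive bound to the multiplicative one in the statement: when $y$ lies in the transverse stable subspace, $\norm{U_{\pi(x)}(t) y}_E$ itself decays like $e^{-bt}$, so the error must be shown to decay at the same rate. I would handle this by splitting $y = y_\parallel + y_\perp$ via the projection $P_{\pi(x)}(0)$ and arguing separately on each piece. For the tangent component $y_\parallel = P_{\pi(x)}(0) y$, the quantity $\norm{U_{\pi(x)}(t) y_\parallel}_E$ is bounded both above and below by constant multiples of $\norm{y_\parallel}_E$ uniformly in $t$ (by biorthogonality \eqref{eq: orthonormal} and smoothness of $\alpha \mapsto \psi^j_\alpha$), so the additive bound directly yields the desired multiplicative one. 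For the transverse component $y_\perp$, the $e^{-b(t-s)}$ decay of $V_\alpha(t-s)$ from \eqref{eq: assumption exponential decay} must be exploited inside the Duhamel integral: this, together with the $e^{-bs}$ decay of $R(s)$, supplies the full $e^{-bt}$ factor on the error, making it comparable to $\norm{U_{\pi(x)}(t) y_\perp}_E$ itself. Closing the argument rigorously requires a short bootstrap to confirm that $y_\perp(t)$ remains approximately in the transverse subspace for all $t$, which is possible once $\tilde\delta$ is taken small enough to make the relevant Gronwall constants strictly less than $1$.
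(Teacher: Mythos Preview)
Your Duhamel formulation and the perturbation bound $\|R(s)\|_{E\to E} \leq C\tilde\delta^{1/2} e^{-bs}$ match the paper exactly. The paper closes the estimate differently, however: rather than passing through an additive bound and then splitting, it defines the stopping time $\hat\tau := \inf\{t \geq 0 : \|y(t)\|_E = 2\|U_{\pi(x)}(t)y\|_E\}$, feeds the bootstrap hypothesis $\|y(s)\|_E \leq 2\|U_{\pi(x)}(s)y\|_E$ (valid for $s \leq \hat\tau$) directly into the Duhamel integrand, and then argues that $\hat\tau = \infty$ once $\tilde\delta$ is small. This continuity device keeps the comparison with $\|U_{\pi(x)}(\cdot)y\|_E$ inside the integral from the outset, so no tangent/transverse decomposition is ever needed.

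Your identification of the additive-versus-multiplicative obstacle is accurate, and the tangent-component argument is fine. The transverse step, though, does not close as you describe. The kernel in the Duhamel integral is $U_{\pi(x)}(t,s) = P_{\tilde\phi_s(\pi(x))}(t-s) + V_{\tilde\phi_s(\pi(x))}(t-s)$; only the $V$-part carries the $e^{-b(t-s)}$ decay you invoke, whereas $P_{\tilde\phi_s(\pi(x))}(t-s)\,R(s)\,y_\perp(s)$ lies in the tangent direction and is merely $O(\tilde\delta\|y_\perp\|_E)$ uniformly in $t$. The error therefore does not inherit a global $e^{-bt}$ factor. Independently, Assumption~D gives only an upper bound on $\|V_{\pi(x)}(t)y_\perp\|_E$ and no lower bound, so even an $O(e^{-bt})$ error could not be declared ``comparable'' to $\|U_{\pi(x)}(t)y_\perp\|_E$. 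The bootstrap you allude to would require $R(s)y_\perp(s)$, not $y_\perp(s)$, to remain transverse, and the assumptions supply no mechanism for that.
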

\begin{proof}
One easily proves existence and uniqueness of the solution to \eqref{eq: y definition} by successively employing a fixed point argument over sufficiently small time intervals. We can equivalently write the solution of \eqref{eq: y definition}  as \cite{ENB00}
\begin{align}\label{eq: equivalent semigruop}
y(t) =U_{\pi(x)}(t) y(0) + \int_0^t U_{\pi(x)}(t-s) \big\lbrace DN\big( \phi_s(x) \big)[y(s)] - DN\big( \gamma_{\tilde{\phi}_s(\pi(x))}  \big)[y(s)]  \big\rbrace ds.
\end{align}
Thanks to the Taylor Remainder Theorem, there exists $\kappa_s$ in the convex hull of $\phi_s(x)$ and $ \gamma_{\tilde{\phi}_s(\pi(x))}$ such that
\[
DN\big( \phi_s(x) \big)[y(s)] - DN\big( \gamma_{\tilde{\phi}_s(\pi(x))}  \big)[y(s)] = D^2N\big( \kappa_s \big)\big[ y(s) ,  \phi_s(x) - \gamma_{\tilde{\phi}_s(\pi(x))}   \big].
\]
Since, by assumption, the second Fr{\'e}chet  Derivative of $N$ is uniformly upperbounded (in the operator topology of $E$) over $\Gamma_{\delta}$, there exists a constant such that 
\begin{align} \label{eq: taylor second temporary}
\norm{ D^2N\big( \gamma_{\tilde{\phi}_s(\pi(x))}  \big)\big[ y(s) ,  \phi_s(x) - \gamma_{\tilde{\phi}_s(\pi(x))}   \big]}_E \leq \rm{Const} \norm{y(s)}_E \norm{ \phi_s(x) - \gamma_{\tilde{\phi}_s(\pi(x))}}_E
\end{align}
Furthermore, thanks to Theorem \ref{Theorem existence isochronal}, through taking $\delta$ to be sufficiently small, we can ensure that for arbitrary $\tilde{\delta} > 0$, for all $s \geq 0$,
\begin{equation}\label{eq: difference bound}
\norm{ \phi_s(x) - \gamma_{\tilde{\phi}_s(\pi(x))}}_E \leq \tilde{\delta} \exp(-bs).
\end{equation}
Let 
\begin{align}
\hat{\tau} = \inf\big\lbrace t\geq 0 : \norm{y(t)}_E = 2 \norm{U_{\pi(x)}(t)y }_E \big\rbrace.
\end{align}
For all $t \leq \hat{\tau}$, it follows from \eqref{eq: equivalent semigruop}, \eqref{eq: taylor second temporary} and \eqref{eq: difference bound} and the fact that $\norm{U_{\alpha}(t)} \leq \rm{Const}_2$ (this is the operator norm with respect to the norm on $E$), that
\begin{align}
\norm{y(t)}_E \leq \norm{U_{\pi(x)}(t) y }_E + \rm{Const}\times \rm{Const}_2 \int_0^t \tilde{\delta}\exp(-bs)2 \norm{U_{\pi(x)}(s) y }_E  ds.
\end{align}
As long as $\tilde{\delta}$ is sufficiently small that $2b^{-1}\rm{Const}\times \rm{Const}_2 \tilde{\delta} < 1$, we find that $\hat{\tau} = \infty$ and for all $t > 0$,
\[
\norm{y(t)}_E < 2  \norm{U_{\pi(x)}(t) y }_E 
\]
\end{proof}

\begin{lemma} \label{Lemma Second Frechet Derivative Flow Map}
There exists $\tilde{\delta} \leq \delta$ ($\delta$ is stipulated in Theorem \ref{Theorem existence isochronal}) such that for any $x\in \Gamma_{\tilde{\delta}}$ and any $t\geq 0$, $\phi_t(x)$ is second Fr{\'e}chet -Differentiable. For any $z,y \in E$, the second Fr{\'e}chet  Derivative is written as $D^2\phi_t(x)[y,z]$, and is such that, writing $x_s = \phi_s(x)$,
\begin{align}
D^2\phi_t(x)[y,z] =& u(t) \text{ where } \\
u(t) =& \int_0^t \Lambda_{t-s}\big\lbrace DN(x_s)[u(s)] + D^2N(x_s)\big[ D\phi_s(x)[y] ,  D\phi_s(x)[z]  \big] \big\rbrace ds.
\end{align}
Furthermore there is a constant such that for all $x\in \Gamma_{\tilde{\delta}}$, and all $y,z \in E$,
\begin{align}
\norm{D^2\phi_t(x)[y,z]}_E \leq \rm{Const} \int_0^t \norm{D\phi_s(x)[y]}_E \norm{D\phi_s(x)[z]}_E ds
\end{align}
\end{lemma}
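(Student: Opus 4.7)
The existence and formula for $D^{2}\phi_{t}(x)[y,z]$ are already essentially contained in Lemma \ref{Frechet Derivative Evolution Operator}: a fixed-point argument in $\mathcal{C}([0,T],E)$ on successive short intervals produces a unique continuous $u$ satisfying the given integral equation, and implicit differentiation of the mild formula \eqref{eq: phi t x evolution operator} then identifies $u(t)$ with $D^{2}\phi_{t}(x)[y,z]$. The new content of the lemma is therefore the quantitative bound, and my plan is to prove it by reinterpreting $u$ as the mild solution of the linear inhomogeneous evolution equation
\begin{equation*}
\partial_{t} u = \mathcal{L}^{x}_{t} u + F(t), \qquad u(0)=0,
\end{equation*}
where $\mathcal{L}^{x}_{t} = A + DN(\phi_{t}(x))$ is the linearization along the trajectory from $x$, and $F(s) := D^{2}N(\phi_{s}(x))\big[D\phi_{s}(x)[y], D\phi_{s}(x)[z]\big]$.

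Let $\Psi^{x}(t,s)$ denote the inhomogeneous evolution family generated by $\mathcal{L}^{x}$, so that the variation-of-constants formula gives
\begin{equation*}
u(t) = \int_{0}^{t} \Psi^{x}(t,s)\,F(s)\,ds.
\end{equation*}
The key observation is that $\Psi^{x}(t,s)F(s)$ is nothing but $D\phi_{t-s}(\phi_{s}(x))[F(s)]$, since the latter solves the linearization of $\phi$ along the trajectory $\phi_{r}(x)$ for $r\geq s$ with initial condition $F(s)$. By Corollary \ref{Cor isochronal phase invariant}, $\pi(\phi_{s}(x)) = \tilde{\phi}_{s}(\pi(x))$, and by choosing $\tilde{\delta}$ small enough that $\phi_{s}(x)\in\Gamma_{\tilde{\delta}}$ for all $s\geq 0$, Lemma \ref{Lemma Frechet Derivative Flow Map} can be applied at the base point $\phi_{s}(x)$ to give
\begin{equation*}
\norm{\Psi^{x}(t,s) F(s)}_{E} \leq 2\,\norm{U_{\tilde{\phi}_{s}(\pi(x))}(t-s)\,F(s)}_{E}.
\end{equation*}

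Using the decomposition $U_{\alpha}(t) = P_{\alpha}(t) + V_{\alpha}(t)$ from Assumption D, the exponential decay of $V_{\alpha}(t)$ in $\norm{\cdot}_{E}$, and the boundedness of $P_{\alpha}(t)$ (which follows from \eqref{eq: bound psi inner product} together with the uniform $E$-boundedness of the $\psi^{i}_{\alpha}$ coming from the smoothness of $\alpha\mapsto\gamma_{\alpha}$), one gets a uniform constant $C_{U}$ with $\sup_{\alpha\in\mathcal{S},\,t\geq 0}\norm{U_{\alpha}(t)}_{E}\leq C_{U}$. Combined with the uniform bound on $D^{2}N$ over $\Gamma_{\delta}$ coming from Assumption A, one obtains
\begin{equation*}
\norm{u(t)}_{E} \leq 2C_{U}\int_{0}^{t} \norm{F(s)}_{E}\,ds \leq \mathrm{Const}\int_{0}^{t} \norm{D\phi_{s}(x)[y]}_{E}\,\norm{D\phi_{s}(x)[z]}_{E}\,ds,
\end{equation*}
which is the asserted inequality.

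The main obstacle I anticipate is rigorously justifying the identification $\Psi^{x}(t,s)F(s) = D\phi_{t-s}(\phi_{s}(x))[F(s)]$ and the attendant use of Lemma \ref{Lemma Frechet Derivative Flow Map}: the hypothesis of that lemma requires $\phi_{s}(x)\in\Gamma_{\tilde{\delta}}$ for all $s$, which in turn forces a potentially smaller choice of $\tilde{\delta}$ than the one appearing in the first-derivative lemma. This is handled by first shrinking $\delta$ so that $\Gamma_{\delta}$ is forward-invariant under $\phi_{t}$ (a consequence of the exponential stability of $\Gamma$ together with the inequality \eqref{eq: difference bound}), and then applying Lemma \ref{Lemma Frechet Derivative Flow Map} at every point $\phi_{s}(x)$ along the trajectory with a single uniform constant. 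Once this uniformity is in place, the rest of the argument reduces to the variation-of-constants computation above.
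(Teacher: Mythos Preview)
Your proposal is correct. The paper itself does not give a detailed proof of this lemma; it merely states that the argument is ``analogous to that of Lemma \ref{Lemma Frechet Derivative Flow Map} and is neglected.'' Your write-up is therefore a genuine fleshing-out of what the paper leaves implicit.

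The approach the paper hints at is to repeat the perturbative stopping-time argument of Lemma \ref{Lemma Frechet Derivative Flow Map} directly for $u$: rewrite the integral equation for $u$ against the reference semigroup $U_{\pi(x)}$, obtaining
\[
u(t)=\int_{0}^{t}U_{\pi(x)}(t-s)F(s)\,ds+\int_{0}^{t}U_{\pi(x)}(t-s)\big(DN(\phi_{s}(x))-DN(\gamma_{\tilde{\phi}_{s}(\pi(x))})\big)[u(s)]\,ds,
\]
and then use the smallness bound \eqref{eq: difference bound} to absorb the second integral via a stopping time, exactly as before. Your route is slightly different in packaging: rather than redoing the perturbation argument, you use the cocycle identity $\Psi^{x}(t,s)=D\phi_{t-s}(\phi_{s}(x))$ to reduce directly to Lemma \ref{Lemma Frechet Derivative Flow Map} applied at the shifted base point $\phi_{s}(x)$. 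This is arguably cleaner, since it reuses the first-derivative result as a black box instead of replaying its proof. Both arguments rest on the same two ingredients---the uniform operator bound $\sup_{\alpha,t}\norm{U_{\alpha}(t)}_{E}<\infty$ and the forward invariance of a sufficiently thin tube $\Gamma_{\tilde{\delta}}$---so they are equivalent in substance.
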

\begin{proof}
By assumption, $N$ is twice continuously differentiable. This proof is thus analogous to that of Lemma \ref{Lemma Frechet Derivative Flow Map} and is neglected.
\end{proof}

Our next step is to prove that $\pi$ is Fr{\'e}chet-differentiable at some $x\in \Gamma_{\delta}$. To do this, we will apply the implicit function theorem to the map $\Xi^j$ defined in \eqref{eq: Xi j definition}. First, let $D\Xi^j(\alpha,x)[y]$ denote the Fr{\'e}chet  Derivative of $\Xi^j$ with respect to $x$ only (in the direction $y\in E$), i.e. (formally)
\begin{multline}
D\Xi^j(\alpha,x)[y] := \lim_{\epsilon \to 0^+} \epsilon^{-1}\big\lbrace \Xi^j(\alpha,x + \epsilon y) -\Xi^j(\alpha,x ) \big\rbrace  \\ = \big\langle  \psi^{*,j}_{ \alpha} , y \big\rangle  + \int_0^{\infty}\big\langle  \psi^{*,j}_{\tilde{\phi}_s(\alpha)},  DN(\phi_s(x))\big[ D\phi_s(x)[y] \big] -DN\big( \gamma_{\tilde{\phi}_s(\alpha)}\big)\big[ D\phi_s(x)[y] \big]  \big\rangle ds . \label{eq: first derivative Xi}
\end{multline}

\begin{lemma} \label{Lemma Existence Limit Derivative}
The limit in \eqref{eq: first derivative Xi} exists for any $x\in \Gamma_{\delta}$, and defines a bounded linear operator $D\Xi^j(\alpha,x): E \to \mathbb{R}$. 
\end{lemma}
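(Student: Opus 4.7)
The proof reduces to verifying two things: (i) that the formal derivative written on the right-hand side of \eqref{eq: first derivative Xi} defines a finite, bounded linear functional of $y\in E$; and (ii) that the difference quotient $\epsilon^{-1}\{\Xi^j(\alpha,x+\epsilon y)-\Xi^j(\alpha,x)\}$ actually converges to this formal derivative. Both steps will hinge on the same exponential-decay estimate for the integrand, which I obtain by applying Taylor's theorem to $DN$ and combining the bound from Theorem \ref{Theorem existence isochronal} with the uniform bound on $\|D\phi_s(x)[y]\|_E$ from Lemma \ref{Lemma Frechet Derivative Flow Map}. Throughout, I take $\alpha$ sufficiently close to $\pi(x)$ (in particular $\alpha=\pi(x)$ covers the application to the implicit function theorem that follows).

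\textbf{Step 1 (the key estimate).} Apply Taylor's theorem to $DN\colon E\to L(E)$ to obtain, for some $\kappa_s$ in the convex hull of $\phi_s(x)$ and $\gamma_{\tilde\phi_s(\alpha)}$,
\[
\bigl[DN(\phi_s(x))-DN(\gamma_{\tilde\phi_s(\alpha)})\bigr]D\phi_s(x)[y] = D^2N(\kappa_s)\bigl[D\phi_s(x)[y],\,\phi_s(x)-\gamma_{\tilde\phi_s(\alpha)}\bigr].
\]
Assumption A bounds $D^2N$ on $E\times E$; Lemma \ref{Lemma Frechet Derivative Flow Map} gives $\|D\phi_s(x)[y]\|_E\le 2\|U_{\pi(x)}(s)y\|_E$, and the decomposition $U_\alpha(s)=V_\alpha(s)+P_\alpha(s)$ of Assumption D, together with the uniform bounds \eqref{eq: bound psi inner product}, yields $\|U_{\pi(x)}(s)y\|_E\le C\|y\|_E$ independently of $s$ and $x$; and Theorem \ref{Theorem existence isochronal} provides $\|\phi_s(x)-\gamma_{\tilde\phi_s(\pi(x))}\|_E\le\tilde\delta\exp(-bs)$. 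Using \eqref{eq: bound psi inner product} once more to control $|\langle\psi^{*,j}_{\tilde\phi_s(\alpha)},\cdot\rangle|$, the integrand of \eqref{eq: first derivative Xi} is pointwise bounded by $C\|y\|_E\exp(-bs)$, hence integrable with integral bounded by a constant times $\|y\|_E$. Together with the boundary term $|\langle\psi^{*,j}_\alpha,y\rangle|\le C\|y\|_E$, this shows that the formal derivative defines a bounded linear functional on $E$.

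\textbf{Step 2 (convergence of the difference quotient).} Writing $\tilde G$ as in \eqref{eq: Xi j definition}, the fundamental theorem of calculus combined with Lemma \ref{Frechet Derivative Evolution Operator} gives
\[
\tilde G(s,\alpha,x+\epsilon y)-\tilde G(s,\alpha,x)=\int_0^1\bigl[DN(w_{r,\epsilon})-DN(\gamma_{\tilde\phi_s(\alpha)})\bigr]\bigl[\phi_s(x+\epsilon y)-\phi_s(x)\bigr]dr,
\]
where $w_{r,\epsilon}$ lies on the segment joining $\phi_s(x)$ and $\phi_s(x+\epsilon y)$. Dividing by $\epsilon$ and letting $\epsilon\to 0^+$ yields pointwise convergence (in $s$) to the integrand of \eqref{eq: first derivative Xi} by differentiability of $\phi_s$. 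To pass the limit inside the $s$-integral I dominate $\epsilon^{-1}\|\phi_s(x+\epsilon y)-\phi_s(x)\|_E$ uniformly in $\epsilon$ using Lemma \ref{Lemma Second Frechet Derivative Flow Map} (mean-value theorem in $\epsilon$ gives $\le C\|y\|_E$ uniformly in $s$), and dominate $\|w_{r,\epsilon}-\gamma_{\tilde\phi_s(\alpha)}\|_E\lesssim\exp(-bs)$ via the same shadowing estimate from Theorem \ref{Theorem existence isochronal} applied to the perturbed initial data $x+\epsilon y$, valid provided $\epsilon$ is small enough that $x+\epsilon y$ remains in a slightly enlarged neighbourhood of $\Gamma$. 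Dominated convergence then produces the identity claimed in \eqref{eq: first derivative Xi}, and linearity of the resulting map $y\mapsto D\Xi^j(\alpha,x)[y]$ is inherited from linearity under the integral.

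\textbf{Main obstacle.} The only subtle point is the uniform-in-$\epsilon$ control of $\phi_s(x+\epsilon y)$ on the whole half-line $s\ge 0$: one must make sure the exponential decay estimate from Theorem \ref{Theorem existence isochronal} still applies along the perturbed trajectory and does so with a constant independent of $\epsilon$. This is handled by shrinking the domain $\Gamma_{\tilde\delta}$ if needed so that both $x$ and $x+\epsilon y$ (for $|\epsilon|$ small and $\|y\|_E$ bounded) lie inside a common $\Gamma_{\delta'}$ with $\delta'$ still satisfying the hypotheses of Theorem \ref{Theorem existence isochronal}. All other estimates are direct consequences of the previous lemmas.
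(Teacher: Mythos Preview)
Your proposal is correct and follows essentially the same route as the paper: Taylor-expand $DN$ about $\gamma_{\tilde\phi_s(\alpha)}$, use the uniform bound on $D^2N$, invoke Lemma~\ref{Lemma Frechet Derivative Flow Map} for $\|D\phi_s(x)[y]\|_E$, and combine with the exponential decay $\|\phi_s(x)-\gamma_{\tilde\phi_s(\pi(x))}\|_E\lesssim e^{-bs}$ from Theorem~\ref{Theorem existence isochronal} to get an integrable bound. The paper's proof stops at your Step~1 (well-definedness and boundedness of the formal expression) and does not spell out the dominated-convergence argument for the difference quotient; your Step~2 is a welcome addition of rigor but not a different strategy.
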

\begin{proof}
We check that the above expression is indeed well-defined. First, thanks to Lemma \ref{Lemma Frechet Derivative Flow Map}, the Fr{\'e}chet  Derivative $D\phi_t(x)$ exists for all $t \geq 0$. Furthermore, thanks to Taylor's Theorem, for some $\kappa_s$ in the convex hull of $\phi_s(x)$ and $ \gamma_{\tilde{\phi}_s(\alpha)}$
\begin{align*}
 DN(\phi_s(x))\big[ D\phi_s(x)[y] \big] -DN\big( \gamma_{\tilde{\phi}_s(x)}\big)\big[ D\phi_s(x)[y] \big]  = D^2N\big( \kappa_s \big) \big[ D\phi_s(x)[y] , \phi_s(x) - \gamma_{\tilde{\phi}_s(\alpha)} \big] .
\end{align*}
Since the second Fr{\'e}chet  Derivative of $N$ is uniformly upperbounded (by assumption), we find that
\begin{align*}
\norm{ DN(\phi_s(x))\big[ D\phi_s(x)[y] \big] -DN\big( \gamma_{\tilde{\phi}_s(x)}\big)\big[ D\phi_s(x)[y] \big] }_E \leq \rm{Const} \norm{ D\phi_s(x)[y]}_E \norm{\phi_s(x) - \gamma_{\tilde{\phi}_s(\alpha)}}_E.
\end{align*}
The existence proof in Theorem \ref{Theorem existence isochronal} implies that $\norm{ \phi_s(x) - \gamma_{\tilde{\phi}_s(\pi(x))}}_E \leq \rm{Const}\exp(-bs)$. This allows us to conclude that \eqref{eq: first derivative Xi} is well-defined.
\end{proof}
Before we can use the implicit function theorem on $\Xi$ to determine the derivative of $\pi$, we must determine an expression for the derivative of $\Xi$ with respect to $\alpha$. Write $\tilde{E}_{\delta} \subset E \times \mathcal{S}$ to be the set of all $(x,\alpha) \in \Gamma_{\delta} \times \mathcal{S}$ such that
\begin{align}
\norm{x - \gamma_{\alpha}}_E \leq \delta .
\end{align}
Define $M: \tilde{E}_{\delta} \to \mathbb{R}^{(m-1)\times (m-1)}$ as follows. Suppose that $( \alpha_{\epsilon})_{\epsilon \geq 0} \subset \mathcal{S}$ is such that $\lim_{\epsilon \to 0^+}\epsilon^{-1}\lbrace \alpha_{\epsilon} - \alpha \rbrace = \mathfrak{v}_{\alpha,k}$, and define
\begin{align}
M^{jk}(x,\alpha) = \lim_{\epsilon \to 0^+} \epsilon^{-1} \lbrace \Xi^j(\alpha_{\epsilon},x) -  \Xi^j(\alpha,x)\rbrace. \label{eq: M jk definition}
\end{align}
In other words, the derivative is in the direction $\mathfrak{v}_{\alpha,k}$ in the tangent plane of $\mathcal{S}$ at $\alpha$. Our assumptions on the smoothness of the manifold $\Gamma$ imply that the limit in \eqref{eq: M jk definition} is well-defined, regardless of the choice of the sequence $\alpha_{\epsilon}$.


\begin{lemma} \label{Lemma Invertible Matrix}
As long as $\delta$ is sufficiently small, $M(x,\alpha)$ is invertible for all $(x,\alpha) \in \tilde{H}_{\delta}$. Write the matrix-inverse as $O(x,\alpha) \in \mathbb{R}^{(m-1) \times (m-1)}$, with elements $\big( O_{ij}(x,\alpha) \big)_{1\leq i,j \leq m-1}$.
\end{lemma}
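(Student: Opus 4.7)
The plan is to exhibit $M(x,\alpha)$ as a small perturbation of an invertible matrix, namely $M(\gamma_\alpha,\alpha) = -I_{(m-1)\times(m-1)}$, and then to obtain invertibility by a Neumann series argument.

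First I would evaluate $M$ on the manifold itself. When $x = \gamma_\alpha$ one has $\phi_s(\gamma_\alpha) = \gamma_{\tilde\phi_s(\alpha)}$, so the Taylor remainder $\tilde G(s,\alpha,\gamma_\alpha)$ appearing in \eqref{eq: Xi j definition} vanishes identically and $\Xi^j(\alpha,\gamma_\alpha)=0$ for every $j$. Taking $\alpha_\epsilon\in\mathcal{S}$ with $\epsilon^{-1}(\alpha_\epsilon-\alpha)\to\mathfrak{v}_{\alpha,k}$ and expanding $\Xi^j(\alpha_\epsilon,\gamma_\alpha)$ to first order in $\epsilon$, the dominant contribution comes from the boundary term $\langle \psi^{*,j}_{\alpha_\epsilon},\gamma_\alpha-\gamma_{\alpha_\epsilon}\rangle = -\epsilon \langle \psi^{*,j}_\alpha,\psi^k_\alpha\rangle + O(\epsilon^2) = -\epsilon\,\delta(j,k) + O(\epsilon^2)$ by the biorthogonality \eqref{eq: orthonormal}. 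The remaining integral $\int_0^\infty \langle \psi^{*,j}_{\tilde\phi_s(\alpha_\epsilon)},\tilde G(s,\alpha_\epsilon,\gamma_\alpha)\rangle\,ds$ is $O(\epsilon^2)$, since $\tilde G$ is quadratic in $\gamma_{\tilde\phi_s(\alpha)}-\gamma_{\tilde\phi_s(\alpha_\epsilon)}$ and this difference remains uniformly $O(\epsilon)$ in $s$, while the exponential-decay bounds of Assumption \textbf{D} furnish integrability in $s$. Dividing by $\epsilon$ and sending $\epsilon\to 0^+$ yields $M^{jk}(\gamma_\alpha,\alpha) = -\delta(j,k)$, so $M(\gamma_\alpha,\alpha) = -I_{(m-1)\times(m-1)}$.

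Next I would prove that $(x,\alpha)\mapsto M(x,\alpha)$ is continuous on $\tilde E_\delta$. The Fr{\'e}chet differentiability of $\phi_s$ in $x$ from Lemma \ref{Lemma Frechet Derivative Flow Map}, together with the uniform decay \eqref{eq: difference bound} and the smoothness of $\alpha\mapsto\psi^{*,j}_\alpha,\psi^j_\alpha$ given in Assumption \textbf{D}, furnishes an integrable dominating function of $s$ for the integrand in $\Xi^j$ and its $\alpha$-derivative, so dominated convergence lets continuity pass under the integral. Compactness of $\mathcal{S}$ then upgrades pointwise continuity to uniform continuity in $\alpha$. Combining with the previous step, for any $\eta>0$ we may choose $\delta>0$ small enough that $\norm{M(x,\alpha)+I}_{\R^{(m-1)\times(m-1)}}<\eta$ uniformly on $\tilde E_\delta$; taking $\eta=1/2$ a Neumann series gives invertibility of $M(x,\alpha) = -(I-(I+M(x,\alpha)))$, with uniform bound $\norm{O(x,\alpha)}\le 2$.

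The main obstacle is the uniform-in-$s$ control needed for the continuity step. To secure an integrable dominating function one decomposes $\phi_s(x)-\gamma_{\tilde\phi_s(\alpha)}$ into its stable part $\phi_s(x)-\gamma_{\tilde\phi_s(\pi(x))}$, which decays like $e^{-bs}$ by \eqref{eq: difference bound}, and its tangential part $\gamma_{\tilde\phi_s(\pi(x))}-\gamma_{\tilde\phi_s(\alpha)}$, of uniform size $O(\norm{\alpha-\pi(x)})=O(\delta)$ thanks to the regularity of the flow $\tilde\phi_s$ on $\mathcal{S}$. The inner product with $\psi^{*,j}_{\tilde\phi_s(\alpha)}$, combined with the exponential decay in \eqref{eq: assumption exponential decay}, then extracts the integrable decay required, allowing Step two to close and the Neumann series conclusion in Step three to go through.
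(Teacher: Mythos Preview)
Your proposal is correct and follows essentially the same strategy as the paper: exhibit $M(x,\alpha)$ as a small perturbation of $-I_{(m-1)\times(m-1)}$ and invoke a Neumann series. The paper is slightly more direct: rather than computing $M(\gamma_\alpha,\alpha)=-I$ and then arguing continuity in $x$, it writes out the explicit formula for $M^{jk}$ by differentiating \eqref{eq: Xi j definition} in $\alpha$, identifies the dominant term $-\langle\psi^{*,j}_\alpha,\psi^k_\alpha\rangle=-\delta(j,k)$ via \eqref{eq: orthonormal}, and bounds the remaining three terms (the boundary term $\langle\partial_k\psi^{*,j}_\alpha,x-\gamma_\alpha\rangle$ and two integral terms) by $O(\delta)$ using \eqref{eq: bound psi inner product} and the quadratic estimate \eqref{eq: bound integral G}. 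This direct route makes your ``main obstacle'' paragraph unnecessary. One small correction there: your claim that the inner product with $\psi^{*,j}_{\tilde\phi_s(\alpha)}$ ``extracts the integrable decay'' via \eqref{eq: assumption exponential decay} is not right, since that assumption bounds the semigroup $V_\alpha(t)$ rather than the vectors $\psi^{*,j}$; the integrability in $s$ instead comes from the exponential decay of $\phi_s(x)-\gamma_{\tilde\phi_s(\alpha)}$ feeding into the quadratic bound on $\tilde G$, exactly as in \eqref{eq: bound integral G}.
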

\begin{proof}
 We let $\partial_k$ denote the directional derivative. In more detail, suppose that $( \alpha_{\epsilon})_{\epsilon \geq 0} \in \mathcal{S}$ is such that $\lim_{\epsilon \to 0^+}\epsilon^{-1}\lbrace \alpha_{\epsilon} - \alpha \rbrace = \mathfrak{v}_{\alpha,k}$, then we write
\begin{align}
\partial_k \psi^{*,j}_{\alpha} = \lim_{\epsilon\to 0^+}\epsilon^{-1}\lbrace \psi^{*,j}_{\alpha_{\epsilon}} -\psi^{*,j}_{\alpha}  \rbrace \label{eq: psi j alpha directional derivative}
\end{align}
We compute that
\begin{multline}
\lim_{\epsilon \to 0^+}\epsilon^{-1} \lbrace \Xi^j(\alpha_{\epsilon},x) - \Xi^j(\alpha,x) \rbrace= \big\langle \partial_{k} \psi^{*,j}_{\alpha} , x - \gamma_{\alpha} \big\rangle - \big\langle \psi^{*,j}_{\alpha}, \psi^k_{\alpha} \big\rangle 
 +  \int_0^{\infty} \big\langle \partial_{k} \psi^{*,j}_{\tilde{\phi}_s(\alpha)}, \tilde{G}(s,\alpha , x )  \big\rangle ds \\
  -  \int_0^{\infty} \big\langle \psi^{*,j}_{\tilde{\phi}_s(\alpha)}, D^2N\big( \gamma_{\tilde{\phi}_s(\alpha)}\big)\big[\phi_s(x) - \gamma_{\tilde{\phi}_s(\alpha)} , \partial_{k}\gamma_{\tilde{\phi}_s(\alpha)} \big]  \big\rangle ds 
\end{multline}
Our assumption in \eqref{eq: orthonormal} dictates that 
\begin{equation}
 \big\langle \psi^{*,j}_{\alpha}, \psi^k_{\alpha} \big\rangle = \delta(j,k).\label{eq: infimum of determinant}
\end{equation}
%
Furthermore, it follows from our assumption in \eqref{eq: bound psi inner product}  that
\begin{align}
\big| \big\langle \partial_{k} \psi^{*,j}_{\alpha} , x - \gamma_{\alpha} \big\rangle \big| &\leq \rm{Const}\norm{x-\gamma_{\alpha}}  \\
\big|  \int_0^{\infty} \big\langle \partial_{k} \psi^{*,j}_{\tilde{\phi}_s(\alpha)}, \tilde{G}(s,\alpha , x )  \big\rangle ds \big| &\leq \rm{Const} \int_0^{\infty} \norm{ \tilde{G}(s,\alpha ,x ) }_E ds
\end{align}
Our assumptions dictate that $ \norm{ \partial_{k} \psi^{*,j}_{\tilde{\phi}_s(\alpha)}}$ (note that this is the $H$-norm) is uniformly upperbounded. The bound in \eqref{eq: bound integral G} implies that
\begin{align}
 \int_0^{\infty} \norm{ \tilde{G}(s,\alpha ,x ) }_E ds\leq  \delta K / (2b), \label{eq: norm integral G square}
\end{align}
which becomes arbitrarily small as $\delta \to 0$. It thus follows from \eqref{eq: infimum of determinant} and \eqref{eq: norm integral G square} that as long as $\delta$ and $r$ are sufficiently small, it must be that $M(x,\alpha)$ is invertible.
\end{proof}
The following corollary follows easily from the previous two lemmas (and its proof is neglected).
\begin{cor} \label{Corollary pi Frechet Differentiable}
$\pi$ is Fr{\'e}chet  Differentiable for all $x\in \Gamma_{\delta}$. The derivative of $\pi$ at $x \in \Gamma_{\delta}$ in the direction $y\in E$ is written as $D\pi(x)[y] \in \mathcal{T}_{\alpha}(\mathcal{S})$. Write $\big\lbrace D\pi_i(x)[y]  \big\rbrace_{i=1}^{m-1}$ to be the component in the direction of $\mathfrak{v}_{\alpha,i}$, that is 
\begin{align}
D\pi^k(x)[y] := \sum_{i=1}^{m-1} \mathfrak{v}_{\alpha,i}^k D\pi_i(x)[y]. \label{equation first frechet derivative}
\end{align}
These components assume the form
\begin{align}\label{eq: R definition}
D\pi_i(x)[y] = -\sum_{j=1}^{m-1}O_{ij}\big(x, \pi(x) \big)D\Xi^j\big( \pi(x),x \big)[y]  := \mathcal{R}_i(x,y).
\end{align}
Finally, there exists a constant such that (as long as $\delta$ is sufficiently small),
\begin{align}
\sup_{1\leq i \leq m-1}\sup_{x\in \Gamma_{\delta}, y\in E}\big| D\pi_i(x)[y]  \big| \leq \rm{Const}\norm{y}_E .
\end{align}
\end{cor}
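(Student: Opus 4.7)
The plan is to deduce the corollary from an implicit function theorem applied to the defining relation
\[
\Xi^j(\pi(x), x) = 0, \qquad 1 \le j \le m-1,
\]
which was established in the proof of Theorem \ref{Theorem existence isochronal}. Since both partial derivatives of $\Xi$ are in hand (Lemma \ref{Lemma Existence Limit Derivative} for the $x$-derivative and Lemma \ref{Lemma Invertible Matrix} for the invertibility of the $\alpha$-derivative $M(x,\alpha)$), the corollary reduces to verifying the hypotheses of the implicit function theorem on Banach spaces and reading off the formula.

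First I would verify that $\Xi: \mathcal{S} \times \Gamma_\delta \to \mathbb{R}^{m-1}$ is continuously Fréchet differentiable in a neighborhood of $(\pi(x),x)$. For the $x$-variable, Lemma \ref{Lemma Existence Limit Derivative} gives existence of $D\Xi^j(\alpha,x)[y]$ as a bounded linear operator on $E$; continuity in $(\alpha,x)$ follows from the smoothness of $\alpha \mapsto \psi^{*,j}_{\tilde{\phi}_s(\alpha)}$ (Assumption {\bf D}) together with the exponential decay estimate $\norm{\phi_s(x)-\gamma_{\tilde{\phi}_s(\pi(x))}}_E \le \text{Const}\,e^{-bs}$ from Theorem \ref{Theorem existence isochronal}, which allows one to bring the derivative under the integral by dominated convergence. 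For the $\alpha$-variable, the computation carried out in Lemma \ref{Lemma Invertible Matrix} identifies the directional derivative as the matrix $M(x,\alpha)$, whose inverse $O(x,\alpha)$ exists by that same lemma.

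With these ingredients, the implicit function theorem applies at each $x_0 \in \Gamma_\delta$, yielding a $C^1$ map $\tilde{\pi}$ defined on a neighborhood of $x_0$ in $E$ such that $\Xi^j(\tilde{\pi}(x),x)=0$. Uniqueness in Theorem \ref{Theorem existence isochronal} then forces $\tilde{\pi}=\pi$ on that neighborhood, so $\pi$ is Fréchet differentiable. Implicitly differentiating $\Xi^j(\pi(x),x)=0$ in the direction $y \in E$ and multiplying by $O_{ij}(x,\pi(x))$ gives
\[
D\pi_i(x)[y] = -\sum_{j=1}^{m-1} O_{ij}(x,\pi(x))\, D\Xi^j(\pi(x),x)[y],
\]
which is the claimed expression \eqref{eq: R definition}. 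That $D\pi(x)[y]$ lies in $T_\alpha \mathcal{S}$ is encoded in the decomposition \eqref{equation first frechet derivative} along the basis $\{\mathfrak{v}_{\alpha,i}\}_{i=1}^{m-1}$.

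Finally, for the uniform bound, I would combine the estimate $\norm{D\phi_s(x)[y]}_E \le 2\norm{U_{\pi(x)}(s)y}_E$ from Lemma \ref{Lemma Frechet Derivative Flow Map} with the exponential decay of $V_{\pi(x)}(s)$ (Assumption {\bf D}) and the boundedness of the inner product against $\psi^{*,j}_{\tilde{\phi}_s(\alpha)}$ (cf.\ \eqref{eq: bound psi inner product}) to bound $|D\Xi^j(\pi(x),x)[y]|$ uniformly in $x \in \Gamma_\delta$ by $\text{Const}\,\norm{y}_E$. The matrix entries $O_{ij}(x,\pi(x))$ are uniformly bounded by Lemma \ref{Lemma Invertible Matrix} (provided $\delta$ is small, since $M$ is then a uniformly small perturbation of the identity), which yields the desired uniform bound on $|D\pi_i(x)[y]|$. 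The main obstacle here is the interchange of derivative and infinite-time integral in $\Xi$; the exponential decay in Assumption {\bf D} and the quadratic bound in \eqref{eq: bound integral G} are exactly what is needed to justify it.
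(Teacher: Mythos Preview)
Your proposal is correct and follows exactly the approach the paper intends: the paper itself omits the proof, stating only that the corollary ``follows easily from the previous two lemmas,'' and those lemmas are precisely Lemma \ref{Lemma Existence Limit Derivative} (boundedness of $D\Xi^j(\alpha,x)$) and Lemma \ref{Lemma Invertible Matrix} (invertibility of $M(x,\alpha)$), combined via the implicit function theorem as you describe. Your treatment of the uniform bound and the justification for differentiating under the integral are the details the paper leaves to the reader.
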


Next, we must determine the second Fr{\'e}chet  Derivative of $\pi$. First, we note that the second Fr{\'e}chet  derivative of $\Xi^j(\alpha,x)$ (with respect to $x$ only) in the directions $y,z \in E$, can be formally written as 
\begin{multline} \label{eq: second derivative of Xi}
D^2\Xi^j(\alpha,x)[y,z] =   \int_0^{\infty}\bigg\langle  \psi^{*,j}_{\tilde{\phi}_s(\alpha)},  D^2N(\phi_s(x))\big[ D\phi_s(x)[y] , D\phi_s(x)[z] \big] \\+ DN(\phi_s(x))\big[ D^2\phi_s(x)[y,z] \big] -DN\big( \gamma_{\tilde{\phi}_s(\alpha)}\big)\big[ D^2\phi_s(x)[y,z] \big]  \bigg\rangle ds .
\end{multline}
The directional derivative (using the same notation as in \eqref{eq: psi j alpha directional derivative}) is
\begin{multline} \label{eq: second derivative of Xi again}
\partial_{k} D\Xi^j(\alpha,x)[y] = \big\langle \partial_{k} \psi^{*,j}_{ \alpha} , y \big\rangle  \\+ \int_0^{\infty}\big\langle \partial_{k} \psi^{*,j}_{\tilde{\phi}_s(\alpha)},  DN(\phi_s(x))\big[ D\phi_s(x)[y] \big] -DN\big( \gamma_{\tilde{\phi}_s(\alpha)}\big)\big[ D\phi_s(x)[y] \big]  \big\rangle ds \\
- \int_0^{\infty}\big\langle  \psi^{*,j}_{\tilde{\phi}_s(\alpha)}, \partial_{k} DN\big( \gamma_{\tilde{\phi}_s(\alpha)}\big)\big[ D\phi_s(x)[y] \big]  \big\rangle ds.
\end{multline}

\begin{lemma}
The derivatives in \eqref{eq: second derivative of Xi} and \eqref{eq: second derivative of Xi again} exist (defining (respectively) a bounded bilinear and bounded linear operator on $E$). $\pi$ is twice Fr{\'e}chet -differentiable in $\Gamma_{\delta}$ (as long as $\delta$ is small enough). For $y,z \in E$,
\begin{multline}
D^2 \pi_i(x)[y,z] = -\sum_{j=1}^{m-1}\big\lbrace DO^{ij}\big(x, \pi(x) \big)[z]D\Xi^j\big( \pi(x),x \big)[y] + O^{ij}\big(x, \pi(x) \big)D^2\Xi^j\big( \pi(x),x \big)[y,z]  \big\rbrace  \\
+ \sum_{j=1}^{m-1} \sum_{a=1}^m \mathcal{R}_j(x,z) \mathfrak{v}_{\pi,j}^a \partial_{a} \mathcal{R}_i(x,y) \label{eq: second pi derivative}
\end{multline}
\end{lemma}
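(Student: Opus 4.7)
The plan is to mimic the proof of Corollary \ref{Corollary pi Frechet Differentiable} one order higher, in three stages: first establish that the formal expressions in \eqref{eq: second derivative of Xi} and \eqref{eq: second derivative of Xi again} define bounded multilinear operators, then invoke the implicit function theorem applied to $\Xi^j(\pi(x),x) \equiv 0$ to conclude that $\pi$ is twice Fr{\'e}chet-differentiable, and finally obtain the explicit formula \eqref{eq: second pi derivative} by differentiating the first-derivative identity from \eqref{eq: R definition}.

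For the well-definedness step, I would write $\xi_s := \phi_s(x) - \gamma_{\tilde{\phi}_s(\alpha)}$, which satisfies $\|\xi_s\|_E \le C\exp(-bs)$ by \eqref{eq: difference bound}. The integrand of \eqref{eq: second derivative of Xi} is precisely $D_x^2 \tilde{G}(s,\alpha,x)[y,z]$. Rather than estimating term by term, I would exploit the Taylor identity
\[
\tilde{G}(s,\alpha,x) = \int_0^1 (1-r)\, D^2 N\bigl(\gamma_{\tilde{\phi}_s(\alpha)} + r\xi_s\bigr)[\xi_s,\xi_s]\,dr,
\]
and differentiate this quadratic-in-$\xi_s$ representation twice in $x$; every resulting term carries at least one factor of $\xi_s$, and hence an $\exp(-bs)$ decay. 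Combined with the at-most-polynomial-in-$s$ growth of $\|D\phi_s(x)[y]\|_E$ and $\|D^2\phi_s(x)[y,z]\|_E$ supplied by Lemmas \ref{Lemma Frechet Derivative Flow Map} and \ref{Lemma Second Frechet Derivative Flow Map}, together with the uniform bound \eqref{eq: bound psi inner product} on the projections, this produces absolute convergence. The expression \eqref{eq: second derivative of Xi again} is handled analogously, using in addition the three-times continuous differentiability of $\alpha \mapsto \psi^{*,j}_\alpha$ guaranteed by Assumption D, which controls the new directional-derivative factors.

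The second stage is an essentially direct application of the implicit function theorem: twice continuous differentiability of $\Xi^j$ in both arguments, combined with invertibility of $M(x,\pi(x))$ from Lemma \ref{Lemma Invertible Matrix} and smoothness of matrix inversion, yields twice continuous Fr{\'e}chet-differentiability of $\pi$. For the third stage, differentiating $\mathcal{R}_i(x,y) = -\sum_j O^{ij}(x,\pi(x)) D\Xi^j(\pi(x),x)[y]$ in direction $z$ splits naturally into a direct $x$-derivative with $\pi(x)$ held fixed (yielding the bracketed terms involving $DO^{ij}$ and $D^2\Xi^j$) and a chain-rule contribution through $\pi(x)$, which via the tangent-plane expansion \eqref{equation first frechet derivative} becomes precisely $\sum_{j,a} \mathcal{R}_j(x,z)\,\mathfrak{v}_{\pi,j}^a\, \partial_a \mathcal{R}_i(x,y)$.

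The main obstacle is the first stage. The naive expression $D^2N(\phi_s(x))[D\phi_s(x)[y], D\phi_s(x)[z]]$ appearing in the integrand of \eqref{eq: second derivative of Xi} has no intrinsic decay when $y,z$ have components tangent to $\Gamma$, so term-by-term estimation fails; the key idea is not to separate the three terms of $D_x^2 \tilde{G}$, but instead to differentiate the Taylor representation of $\tilde{G}$ directly, where the quadratic dependence on the decaying quantity $\xi_s$ is manifest and yields the common factor responsible for integrability.
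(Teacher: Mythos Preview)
Your stages 2 and 3 coincide with the paper's proof, which simply says that \eqref{eq: second pi derivative} is obtained by ``directly differentiat[ing] the expression in \eqref{eq: R definition}'', and that the existence of \eqref{eq: second derivative of Xi}--\eqref{eq: second derivative of Xi again} follows by an argument ``very similar to the proof of Lemma \ref{Lemma Existence Limit Derivative}.''

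The gap is in your stage 1. You claim that after differentiating the representation
\[
\tilde G(s,\alpha,x) \,=\, \int_0^1 (1-r)\, D^2N\bigl(\gamma_{\tilde\phi_s(\alpha)} + r\xi_s\bigr)[\xi_s,\xi_s]\,dr
\]
twice in $x$, ``every resulting term carries at least one factor of $\xi_s$.'' That is not true: the second derivative of any quadratic expression $F[\xi,\xi]$ necessarily contains the term $2F[D_y\xi,D_z\xi]$ with no surviving $\xi$, just as $(d^2/dt^2)\,t^2=2$. Concretely, one of the terms in $D^2_x\tilde G[y,z]$ is
\[
2\int_0^1 (1-r)\, D^2N\bigl(\gamma_{\tilde\phi_s(\alpha)} + r\xi_s\bigr)\bigl[D\phi_s(x)[y],\,D\phi_s(x)[z]\bigr]\,dr,
\]
which is (up to the harmless $r$-average) exactly the first summand of \eqref{eq: second derivative of Xi} --- the very ``naive expression'' you correctly flagged in your final paragraph as lacking intrinsic decay for tangent $y,z$. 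The Taylor rewriting therefore reproduces the obstacle rather than removing it. A secondary issue: fully executing this route also generates a $D^4N$ term, one order beyond what Assumption~A supplies.

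The paper's own one-line proof does not explicitly isolate how this tangent--tangent contribution is controlled either. If you want to close the gap, the decay does not come from a bookkeeping identity but from a structural cancellation tied to $\alpha=\pi(x)$: for instance, when $\Gamma$ consists of fixed points, differentiating $\mathcal{L}_\alpha\psi^i_\alpha=0$ once more in $\alpha$ and pairing with $\psi^{*,j}_\alpha\in\ker\mathcal{L}^*_\alpha$ gives $\langle \psi^{*,j}_\alpha, D^2N(\gamma_\alpha)[\psi^i_\alpha,\psi^k_\alpha]\rangle=0$, and the deviation from this identity along the flow is $O(\|\xi_s\|_E)=O(e^{-bs})$. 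Splitting $y,z$ into tangent and normal parts and using this cancellation for the tangent--tangent piece, together with the $V_\alpha$-decay for any piece containing a normal component, is what actually makes \eqref{eq: second derivative of Xi} absolutely convergent.
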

\begin{proof}
The proof of the existence of the derivatives in \eqref{eq: second derivative of Xi} and \eqref{eq: second derivative of Xi again} is very similar to the proof of Lemma \ref{Lemma Existence Limit Derivative}. One can then directly differentiate the expression in \eqref{eq: R definition} to obtain \eqref{eq: second pi derivative}.
\end{proof}

In order that we may employ a modified  It{\^o} Lemma to obtain an SDE expression for $\pi(x_t)$, we require that the second derivative of the map $\pi$ is `trace class'. The following lemma obtains a bound on the second-derivative of the isochronal phase map with respect to the basis vectors $\lbrace e_{\pi(x),j} \rbrace_{j\geq 1}$ (these are defined in Assumptions D and E ).
\begin{lemma} \label{Lemma bound decay basis vectors}
Recall the orthonormal basis $\lbrace e_{\alpha,j} \rbrace_{j\geq 1} \subset E$ of $H$ outlined in \eqref{eq: assumption basis 1}-\eqref{eq: assumption basis 3}. There exists a constant such that for all $j,k \geq 1$,
\begin{align}
\sup_{x\in \Gamma_{\delta}, 1\leq i \leq m-1}\bigg| D^2 \pi_i(x)[e_{\pi(x),j},e_{\pi(x),k}]  \bigg| \leq \rm{Const} \times (\lambda_j + \lambda_k )^{-1}. 
\end{align}
Furthermore the following map is continuous
\[
x \mapsto D^2 \pi_i(x)[e_{\pi(x),j},e_{\pi(x),k}] .
\]
\end{lemma}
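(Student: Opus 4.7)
The plan is to bound each term in the formula \eqref{eq: second pi derivative} at the basis pair $(y,z) = (e_{\pi(x),j}, e_{\pi(x),k})$. Since the matrix entries $O^{il}(x,\pi(x))$ are uniformly bounded by Lemma \ref{Lemma Invertible Matrix}, the main work lies in controlling $D^2\Xi^l(\pi(x),x)[e_{\pi(x),j},e_{\pi(x),k}]$, which is given by the integral \eqref{eq: second derivative of Xi}. The remaining summands in \eqref{eq: second pi derivative}, namely the $DO^{ij}\cdot D\Xi^j$ products and the $\mathcal{R}_j\,\mathfrak{v}\,\partial_a\mathcal{R}_i$ products, each contain a first-derivative factor whose decay in $j$ or $k$ will be a byproduct of the same estimates, so I would defer them until after the main integral bound is in place.

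To bound the first integrand in \eqref{eq: second derivative of Xi}, I would use Lemma \ref{Lemma Frechet Derivative Flow Map} to write $\|D\phi_s(x)[e_{\pi(x),j}]\|_E \leq 2\|U_{\pi(x)}(s) e_{\pi(x),j}\|_E$, then decompose $U_{\pi(x)}(s) = V_{\pi(x)}(s) + P_{\pi(x)}(s)$. Assumption \eqref{eq: assumption basis 3} produces the key estimate $\|V_{\pi(x)}(s) e_{\pi(x),j}\|_E \leq C\exp(-s\lambda_j)$. Multiplied against the analogous estimate in the $k$-direction, this yields a factor $\exp(-s(\lambda_j+\lambda_k))$, and after combining with the uniform bounds on $\psi^{*,l}_{\tilde{\phi}_s(\pi(x))}$ from \eqref{eq: bound psi inner product} and on $D^2N$, integration over $s\in[0,\infty)$ gives the desired $(\lambda_j+\lambda_k)^{-1}$ contribution. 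For the second integrand, I would use Lemma \ref{Lemma Second Frechet Derivative Flow Map} to bound $D^2\phi_s(x)$ by the double integral of $\|D\phi_r(x)[e_{\pi(x),j}]\|_E\,\|D\phi_r(x)[e_{\pi(x),k}]\|_E$, and exploit the Taylor estimate $\|DN(\phi_s(x)) - DN(\gamma_{\tilde{\phi}_s(\pi(x))})\|_{E\to E} \leq C\exp(-bs)$ furnished by the exponential convergence $\|\phi_s(x) - \gamma_{\tilde{\phi}_s(\pi(x))}\|_E \leq C\exp(-bs)$ from Theorem \ref{Theorem existence isochronal}; this extra decay ensures that this piece is also bounded by $C(\lambda_j+\lambda_k)^{-1}$.

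The hard part is the $P_{\pi(x)}(s)\,e_{\pi(x),j}$ components that arise in the $U=V+P$ decomposition. These lie in the tangent space of $\Gamma$, are only uniformly bounded in $s$, and would a priori generate divergent tangent-tangent integrands of the form $\int_0^\infty \langle\psi^{*,l}_{\tilde{\phi}_s(\pi(x))},\,D^2N(\phi_s(x))[\psi^i_{\tilde{\phi}_s(\pi(x))},\psi^{i'}_{\tilde{\phi}_s(\pi(x))}]\rangle\,ds$. To tame these I would differentiate twice the invariance relation $\pi(\phi_s(x)) = \tilde{\phi}_s(\pi(x))$ of Corollary \ref{Cor isochronal phase invariant}, producing a recursion expressing $D^2\pi(x)$ in terms of $D^2\pi(\phi_s(x))$ composed with $D\phi_s$, together with $D^2\tilde{\phi}_s(\pi(x))$ pulled back by $D\pi$. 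Sending $s\to\infty$ collapses the normal contributions via $V_{\pi(x)}(s)$, while the tangent piece reduces to the intrinsic second derivative $D^2\tilde{\phi}_s$ restricted to the compact manifold $\mathcal{S}$, which is bounded uniformly. Combining this with the first estimate above caps the total at $C(\lambda_j+\lambda_k)^{-1}$.

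Finally, continuity of $x\mapsto D^2\pi_i(x)[e_{\pi(x),j},e_{\pi(x),k}]$ follows by dominated convergence applied to the integral representation: the exponential majorants obtained in the bound above serve as the dominating function, while pointwise continuity in $x$ of the integrand is a consequence of the smoothness of $N$ together with the smoothness of $\alpha\mapsto e_{\alpha,i}$, $\alpha\mapsto\psi^j_\alpha$, and $\alpha\mapsto\psi^{*,j}_\alpha$ assumed in Assumption D, along with continuity of $x\mapsto\pi(x)$ established in Theorem \ref{Theorem existence isochronal}.
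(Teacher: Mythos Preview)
Your core strategy matches the paper's: bound the two integrands in $D^2\Xi^l(\pi(x),x)[e_{\pi(x),j},e_{\pi(x),k}]$ from \eqref{eq: second derivative of Xi} using the exponential decay of $D\phi_s(x)[e_{\pi(x),j}]$ and the bound on $D^2\phi_s$ from Lemma~\ref{Lemma Second Frechet Derivative Flow Map}, together with the extra $e^{-bs}$ factor coming from $\|\phi_s(x)-\gamma_{\tilde\phi_s(\pi(x))}\|_E$. The paper's proof is much terser than yours: it simply asserts in \eqref{eq: decay linearization j again} that $\|D\phi_t(x)[e_{\pi(x),j}]\|_E \leq C\exp(-\lambda_j t)$, citing Lemma~\ref{Lemma Frechet Derivative Flow Map} and the basis hypothesis \eqref{eq: assumption basis 3}, and then performs the two integral estimates and stops. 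It does not decompose $U=V+P$, does not separately treat the $DO^{ij}\cdot D\Xi^j$ or $\mathcal{R}_j\,\mathfrak{v}\,\partial_a\mathcal{R}_i$ summands from \eqref{eq: second pi derivative}, and does not spell out the dominated-convergence continuity argument.

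Your ``hard part'' paragraph identifies a real subtlety that the paper simply passes over: hypothesis \eqref{eq: assumption basis 3} bounds $V_\alpha(t)e_{\alpha,j}$, not $U_\alpha(t)e_{\alpha,j}$, so without $P_\alpha e_{\alpha,j}=0$ the leap to \eqref{eq: decay linearization j again} is not automatic. The paper evidently reads the basis assumption as delivering the full decay (consistent with the parenthetical remark after \eqref{eq: assumption basis 3} that one constructs $\{e_{\alpha,j}\}$ from eigenvectors of $\mathcal{L}_\alpha$, in which case the stable ones satisfy $P_\alpha e_{\alpha,j}=0$). Your proposed workaround---differentiating the invariance relation $\pi\circ\phi_s=\tilde\phi_s\circ\pi$ and sending $s\to\infty$---is an interesting idea but remains vague as written; in particular you would need $D^2\tilde\phi_s$ uniformly bounded in $s$, which is not clear in general. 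The cleaner path, and the one the paper implicitly takes, is to regard \eqref{eq: assumption basis 3} as already encoding $\|U_\alpha(t)e_{\alpha,j}\|_E\le C\exp(-\lambda_j t)$, i.e.\ that the basis is adapted to the stable subspace.
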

\begin{proof}
We start by bounding the terms in $D^2\Xi^l(\alpha,x)[ e_{\pi(x),j}, e_{\pi(x),k}] $ (as written in \eqref{eq: second derivative of Xi}). Thanks to Lemma \ref{Lemma Frechet Derivative Flow Map}, and the definition of the orthonormal basis in \eqref{eq: assumption basis 1}-\eqref{eq: assumption basis 3}, it must be that 
\begin{align}
\norm{D\phi_t(x)[e_{\pi(x),j}]}_E \leq C\exp(-\lambda_j t). \label{eq: decay linearization j again}
\end{align}
Thanks to the upperboundedness of the second Fr{\'e}chet  Derivative of $N$, and our assumption in \eqref{eq: bound psi inner product}, it must be that there is a constant such that for any $l$,
\begin{multline}
\bigg| \int_0^{\infty}\bigg\langle  \psi^{*,l}_{\tilde{\phi}_s(\alpha)},  D^2N(\phi_s(x))\big[ D\phi_s(x)[e_{\pi(x),j}] , D\phi_s(x)[e_{\pi(x),k}] \big] \bigg\rangle ds \bigg| \\ \leq \rm{Const} \times \int_0^\infty \exp\big(-s(\lambda_j + \lambda_k) \big)ds = \rm{Const}(\lambda_j + \lambda_k)^{-1}.
\end{multline}
We similarly obtain that there is a constant $K > 0$ such that
\begin{align*}
\bigg| \int_0^{\infty}\bigg\langle  \psi^{*,l}_{\tilde{\phi}_s(\alpha)}, &DN(\phi_s(x))\big[ D^2\phi_s(x)[e_{\pi(x),j}, e_{\pi(x),k}] \big] - DN\big( \gamma_{\tilde{\phi}_s(\alpha)}\big)\big[ D^2\phi_s(x)[e_{\pi(x),j}, e_{\pi(x),k}] \big]  \bigg\rangle ds \bigg| \\
\leq & \int_0^{\infty} K \exp(-bs) \int_0^s \exp\big(-r(\lambda_j + \lambda_k ) \big) dr ds \\
\leq & \rm{Const}(\lambda_j + \lambda_k)^{-1},
 \end{align*}
 using the bound in Lemma \ref{Lemma Second Frechet Derivative Flow Map}, and the bound $\norm{\phi_t(x) -  \gamma_{\tilde{\phi}_s(\alpha)}}_E \leq \rm{Const}\times \exp(-bt) $.
\end{proof}
\begin{lemma}\label{Lemma bound remainder}
Let $\lbrace e_j \rbrace_{j\geq 1} \subset E$ be any orthonormal basis of $H$. Then for any $\epsilon > 0$, there exists $k_{\epsilon}$ such that
\begin{align}
\sup_{x\in \Gamma_{\delta}}\big| \sum_{j\geq k_{\epsilon}} D^2 \pi_i(x)[B(x)e_{j},B(x)e_{j}] \big|\leq \epsilon .
\end{align}
\end{lemma}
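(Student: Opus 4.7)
The plan is to reduce the tail sum to the quantity controlled by Assumption E's equation \eqref{eq: bounded sum} by expanding each noise direction $B(x)e_j$ in the $x$-dependent orthonormal basis $\{e_{\pi(x),k}\}_{k \geq 1}$ guaranteed by Assumption D. Setting $c_{k,j}(x) := \langle e_{\pi(x),k}, B(x) e_j \rangle$, so that $B(x)e_j = \sum_{k} c_{k,j}(x)\, e_{\pi(x),k}$ in $H$, bilinearity and continuity of $D^2\pi_i(x) : E \times E \to \mathbb{R}$ yield
\[
D^2\pi_i(x)[B(x)e_j, B(x)e_j] \;=\; \sum_{k,l=1}^{\infty} c_{k,j}(x)\, c_{l,j}(x)\, D^2\pi_i(x)[e_{\pi(x),k}, e_{\pi(x),l}].
\]
The key quantitative input is Lemma \ref{Lemma bound decay basis vectors}, which supplies the uniform estimate $|D^2\pi_i(x)[e_{\pi(x),k}, e_{\pi(x),l}]| \leq \rm{Const}\,(\lambda_k+\lambda_l)^{-1}$ over all $x \in \Gamma_\delta$ and $k,l \geq 1$.

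From here I would sum over $j \geq p$ and apply the triangle inequality to reach
\[
\Big|\sum_{j\geq p} D^2\pi_i(x)[B(x)e_j, B(x)e_j]\Big| \;\leq\; \rm{Const}\sum_{j\geq p}\sum_{k,l=1}^{\infty} (\lambda_k+\lambda_l)^{-1}\, |c_{k,j}(x)|\, |c_{l,j}(x)|,
\]
and argue that the right-hand side tends to $0$ uniformly in $x \in \Gamma_\delta$ as $p \to \infty$. Given such uniform convergence, one simply selects $k_\epsilon$ large enough that this upper bound is at most $\epsilon$, and the lemma follows.

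The main obstacle is the final step, because Assumption E as written controls the analogous sum \emph{without} absolute values on the coefficient products $c_{k,j}(x)c_{l,j}(x)$. To bridge this gap, I would exploit the integral representation $(\lambda_k+\lambda_l)^{-1} = \int_0^{\infty} e^{-(\lambda_k+\lambda_l)t}\,dt$, which rewrites the assumed quantity as $\sum_{j\geq p}\int_0^\infty \big(\sum_k c_{k,j}(x) e^{-\lambda_k t}\big)^2 dt$ and our target as the same integral with $|c_{k,j}(x)|$ in place of $c_{k,j}(x)$. Combining an AM--GM weighting with the Hilbert--Schmidt bound $\norm{P^A_t B(x)}_{HS} \leq K(t)$ from Assumption E should dominate the absolute-value integrand by a non-negative kernel whose tail in $j$ is forced to zero by \eqref{eq: bounded sum}; as the remark following Assumption E points out, in the concrete reaction-diffusion setting with $\lambda_k \simeq k^2$ and $B$ bounded, all the requisite sums are manifestly convergent, so uniformity in $x \in \Gamma_\delta$ follows without further difficulty from the supremum already present in \eqref{eq: bounded sum}.
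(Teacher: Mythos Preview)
Your approach is essentially identical to the paper's: expand $B(x)e_j$ in the basis $\{e_{\pi(x),k}\}$, use bilinearity, apply Lemma~\ref{Lemma bound decay basis vectors} to get the $(\lambda_k+\lambda_l)^{-1}$ bound, and then invoke the tail condition \eqref{eq: bounded sum} from Assumption~E (the paper writes ``Assumption D'' in the last line, evidently a slip). You are actually more careful than the paper on one point: after applying the triangle inequality the paper also arrives at $\sum_{k,l}|c_{k,j}c_{l,j}|(\lambda_k+\lambda_l)^{-1}$ with absolute values, and then simply cites the assumption without commenting on the sign discrepancy you flag. Your proposed bridge via the integral representation and the Hilbert--Schmidt bound is not made fully rigorous here, but the paper does not close this gap either; both arguments effectively read \eqref{eq: bounded sum} as an assumption on the absolute-value sum (which, as the remark after Assumption~E and your final sentence indicate, is automatic in the main reaction--diffusion examples where $\lambda_k\simeq k^2$ and $B$ is bounded).
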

\begin{proof}
Using the linearity of the derivative,
\begin{align}
D^2 \pi_i(x)[B(x)e_{j},B(x)e_{j}] = \sum_{k,l =1}^{\infty}\langle B(x)e_j , e_{\pi(x),k} \rangle \langle B(x)e_j , e_{\pi(x),l} \rangle  D^2 \pi_i(x)[ e_{\pi(x),k}, e_{\pi(x),l} ] 
\end{align}
It thus follows from Lemma \ref{Lemma bound decay basis vectors} that
\begin{align}
\big| D^2 \pi_i(x)[B(x)e_{j},B(x)e_{j}] \big| \leq \rm{Const} \sum_{k,l =1}^{\infty} \big| \langle B(x)e_j , e_{\pi(x),k} \rangle \langle B(x)e_j , e_{\pi(x),l} \rangle \big| \big( \lambda_k + \lambda_l \big)^{-1}.
\end{align}
The Lemma now follows from Assumption D. 

\end{proof}

The following Lemma is needed to prove that the stochastic integral in the isochronal SDE is well-defined.
\begin{lemma} \label{Lemma for the stochastic integral}
For any orthonormal basis $\lbrace e_j \rbrace_{j\geq 1} \subset E$ of $H$, and for any $\epsilon > 0$, there exists $k_{\epsilon}$ such that 
\begin{align}
\sup_{x\in B_{\delta}(\Gamma)} \sum_{k \geq k_{\epsilon}}\big| D\Xi^j(\pi(x),x)[ B(x)e_k ] \big|^2 \leq \epsilon
\end{align}
\end{lemma}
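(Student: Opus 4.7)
The plan is to mirror the proof of Lemma \ref{Lemma bound remainder}: expand $B(x) e_k$ in the $x$-adapted orthonormal basis $\{e_{\pi(x), l}\}_{l \geq 1}$ introduced in Assumption D, and combine decay estimates on $D\Xi^j(\pi(x), x)[e_{\pi(x), l}]$ with Assumption \eqref{eq: bounded sum}. By the linearity of $D\Xi^j(\pi(x), x)[\cdot]$,
\[
D\Xi^j(\pi(x), x)[B(x) e_k] = \sum_{l \geq 1} \langle e_{\pi(x), l}, B(x) e_k \rangle \, D\Xi^j(\pi(x), x)[e_{\pi(x), l}],
\]
so by Cauchy-Schwarz,
\[
|D\Xi^j(\pi(x), x)[B(x) e_k]|^2 \leq \sum_{l, l' \geq 1} |\langle e_{\pi(x), l}, B(x) e_k \rangle \langle e_{\pi(x), l'}, B(x) e_k \rangle| \cdot |D\Xi^j[e_{\pi(x), l}] D\Xi^j[e_{\pi(x), l'}]|.
\]
The strategy is to show that $|D\Xi^j(\pi(x), x)[e_{\pi(x), l}]| \leq C \lambda_l^{-1} + R(x,l)$ where $R(x,l)$ represents a residual term that is $\ell^2$-summable in $l$ uniformly in $x$. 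Upon substitution, the first contribution reduces to a sum of exactly the form bounded by Assumption \eqref{eq: bounded sum}, whose tail vanishes uniformly in $x$.

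The bound on $|D\Xi^j(\pi(x), x)[e_{\pi(x), l}]|$ is obtained from the explicit formula \eqref{eq: first derivative Xi}: applying Taylor's theorem to the difference $DN(\phi_s(x))[\cdot] - DN(\gamma_{\tilde\phi_s(\pi(x))})[\cdot]$ exactly as in the proof of Lemma \ref{Lemma Existence Limit Derivative}, the integrand becomes
\[
\langle \psi^{*,j}_{\tilde\phi_s(\pi(x))}, D^2 N(\kappa_s)[D\phi_s(x)[e_{\pi(x), l}], \phi_s(x) - \gamma_{\tilde\phi_s(\pi(x))}] \rangle.
\]
Using (i) $\|\phi_s(x) - \gamma_{\tilde\phi_s(\pi(x))}\|_E \leq \tilde\delta e^{-bs}$ from Theorem \ref{Theorem existence isochronal}, (ii) $\|D\phi_s(x)[e_{\pi(x), l}]\|_E \leq 2 \|U_{\pi(x)}(s) e_{\pi(x), l}\|_E$ from Lemma \ref{Lemma Frechet Derivative Flow Map}, (iii) the decomposition $U_{\pi(x)}(s) = V_{\pi(x)}(s) + P_{\pi(x)}(s)$ with $\|V_{\pi(x)}(s) e_{\pi(x), l}\|_E \leq C e^{-\lambda_l s}$ from Assumption D, and (iv) the uniform bound on $\|\psi^{*,j}_\alpha\|$, one obtains
\[
|D\Xi^j(\pi(x), x)[e_{\pi(x), l}]| \leq C \lambda_l^{-1} + C \sum_{i=1}^{m-1} |\langle \psi^{*,i}_{\pi(x)}, e_{\pi(x), l} \rangle| + |\langle \psi^{*,j}_{\pi(x)}, e_{\pi(x), l} \rangle|.
\]

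Finally, substituting this bound into the Cauchy-Schwarz expression above and splitting into the $\lambda_l^{-1}$ part and the residual part: the first contribution is controlled uniformly by Assumption \eqref{eq: bounded sum} in exactly the manner used in Lemma \ref{Lemma bound remainder}. The residual contributions, involving factors of $|\langle \psi^{*,i}_{\pi(x)}, e_{\pi(x), l} \rangle|$, are summable in $l$ because $\psi^{*,i}_{\pi(x)} \in H$ and $\{e_{\pi(x), l}\}$ is orthonormal, and the corresponding tail in $k$ is handled by noting that $B(x)^* \psi^{*,i}_{\pi(x)} \in H$ depends continuously on $x$ through the compact family $\pi(x) \in \mathcal{S}$, together with the Lipschitz continuity of $B$.

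The main obstacle is the $P_{\pi(x)}(s) e_{\pi(x), l}$ component of the linearized flow: it does not decay in $s$ at a rate depending on $l$, so that naively the estimate on $D\Xi^j[e_{\pi(x), l}]$ would fail to decay in $l$. This obstacle is resolved by exploiting the $e^{-bs}$ factor in $\|\phi_s(x) - \gamma_{\tilde\phi_s(\pi(x))}\|_E$ to render the relevant integral finite, and by observing that the $P$-contribution yields terms of the form $\langle \psi^{*,i}_{\pi(x)}, e_{\pi(x), l} \rangle$, which are $\ell^2$-summable in $l$ --- enough to feed into the matrix-type estimate that extracts the uniform tail bound from Assumption \eqref{eq: bounded sum}.
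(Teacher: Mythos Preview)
Your overall strategy --- expand $B(x)e_k$ in the adapted basis $\{e_{\pi(x),l}\}$, obtain decay $\lambda_l^{-1}$ for the integral part of $D\Xi^j$, and invoke Assumption~\eqref{eq: bounded sum} --- is the same as the paper's. The paper, however, begins by splitting $D\Xi^j(\alpha,x)[y]$ at the \emph{operator} level as
\[
D\Xi^j(\alpha,x)[y]=\big\langle \psi^{*,j}_\alpha,\,y\big\rangle+\tilde\Xi^j(\alpha,x)[y],
\]
handles the first piece via Parseval for $B(x)^*\psi^{*,j}_\alpha\in H$ (so $\sum_{k\ge k_\epsilon}\langle\psi^{*,j}_\alpha,B(x)e_k\rangle^2=\sum_{k\ge k_\epsilon}\langle B(x)^*\psi^{*,j}_\alpha,e_k\rangle^2\to0$), and only then runs the basis expansion on $\tilde\Xi^j$ with the pointwise bound $|\tilde\Xi^j(\pi(x),x)[e_{\pi(x),l}]|\le C\lambda_l^{-1}$ coming from \eqref{eq: decay linearization j again}.

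Your reorganization introduces a real gap in the residual term. Once you pass to absolute values in the expansion, namely
\[
|D\Xi^j[B(x)e_k]|^2\le\sum_{l,l'}|\langle e_{\pi(x),l},B(x)e_k\rangle|\,|\langle e_{\pi(x),l'},B(x)e_k\rangle|\,|D\Xi^j[e_{\pi(x),l}]|\,|D\Xi^j[e_{\pi(x),l'}]|,
\]
the residual contribution carries factors $R(x,l)=C\sum_i|\langle\psi^{*,i}_{\pi(x)},e_{\pi(x),l}\rangle|$, and the resulting tail sum over $k$ cannot be collapsed back into $\langle B(x)^*\psi^{*,i}_{\pi(x)},e_k\rangle$. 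Any attempt to bound it (e.g.\ Cauchy--Schwarz in $l$) leads to $\sum_k\|B(x)e_k\|_H^2=\|B(x)\|_{HS}^2$, and $B(x)$ is \emph{not} assumed to be Hilbert--Schmidt (only $\Lambda_t B(x)$ is). So the sentence ``the corresponding tail in $k$ is handled by noting that $B(x)^*\psi^{*,i}_{\pi(x)}\in H$'' does not go through as written: the absolute values destroy precisely the linear structure you need for Parseval. The fix is to split off the finite-rank pieces (the leading $\langle\psi^{*,j}_\alpha,\cdot\rangle$ and, if you want to be careful about the $P_{\pi(x)}$-contribution to $D\phi_s$, that finite-rank part as well) \emph{before} expanding in the basis and before taking moduli --- exactly as the paper does.
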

\begin{proof}
The expression in \eqref{eq: R definition} implies that $D\pi$ can be written as a linear combination of the elements $\lbrace D\Xi^j(\pi(x),x)[y]  \rbrace_{1\leq j \leq m-1} $. Furthermore the coefficients are uniformly bounded (as long as $\delta$ is sufficiently small). It thus suffices that we bound $\big| D\Xi^j(\pi(x),x)[y]  \big|$. To this end, we make the decomposition
\begin{align}
D\Xi^j(\alpha,x)[y] =& \big\langle  \psi^{*,j}_{ \alpha} , y \big\rangle + \tilde{\Xi}^j(\alpha,x)[y]  \text{ where }\\
\tilde{\Xi}^j(\alpha,x)[y] =& \int_0^{\infty}\big\langle  \psi^{*,j}_{\tilde{\phi}_s(\alpha)},  DN(\phi_s(x))\big[ D\phi_s(x)[y] \big] -DN\big( \gamma_{\tilde{\phi}_s(\alpha)}\big)\big[ D\phi_s(x)[y] \big]  \big\rangle ds .
\end{align}
It suffices that we prove that for any $\epsilon > 0$, we can find $k_{\epsilon} \in \mathbb{Z}^+$ such that
\begin{align}
\sup_{x\in B_{\delta}(\Gamma)} \sum_{k \geq k_{\epsilon}} \big| \tilde{\Xi}^j(\pi(x),x)[ B(x)e_k ] \big|^2 \leq \epsilon \\
\sup_{x\in B_{\delta}(\Gamma)} \sum_{k \geq k_{\epsilon}} \big\langle  \psi^{*,j}_{ \alpha} , B(x)e_k \big\rangle^2 \leq \epsilon
\end{align}
Let $\lbrace \alpha_{kl} \rbrace_{k,l \geq 1}$ be such that
\begin{align}
B(x)e_k = \sum_{l=1}^{\infty} \alpha_{kl}(x) e_{\pi(x),l}
\end{align}
Using the fact that $N$ is twice continuously differentiable, our Assumption \eqref{eq: bound psi inner product}, and the decay bound in \eqref{eq: decay linearization j again}, it must be that
\begin{align}
\big| \tilde{\Xi}^j(\pi(x),x)[ e_{\pi(x),l} ] \big| \leq \rm{Const} \times \int_0^\infty \exp\big( - \lambda_l t \big) dt = \rm{Const} \times \lambda_l^{-1}
\end{align}
and hence for some $k_{\epsilon} \in \mathbb{Z}^+$,
\begin{align}
\sup_{x\in B_{\delta}(\Gamma)} \sum_{k \geq k_{\epsilon}} \big| \tilde{\Xi}^j(\pi(x),x)[ B(x)e_k ] \big|^2 &\leq \rm{Const} \sum_{k \geq k_{\epsilon}}\bigg( \sum_{l=1}^\infty \alpha_{kl}(x) \lambda_l^{-1} \bigg)^2 \\
&= \rm{Const} \sum_{k \geq k_{\epsilon}}\bigg( \sum_{l=1}^\infty \big\langle B(x)e_k , e_{\pi(x),l} \big\rangle \lambda_l^{-1} \bigg)^2  \\
&= \rm{Const} \sum_{k \geq k_{\epsilon}}\sum_{l,j=1}^\infty \big\langle B(x)e_k , e_{\pi(x),l} \big\rangle \big\langle B(x)e_k , e_{\pi(x),j} \big\rangle  \lambda_l^{-1} \lambda_j^{-1} 
\end{align}
Thanks to Assumption E, the above can be made arbitrarily small by taking $k_{\epsilon}$ to be sufficiently large.
\end{proof}

\subsection{Ito Formula for the Isochronal Phase}

We let $\pi_t=\pi(X_t)$ be the isochronal phase of $X_t$  (as long as $X_t \in \Gamma_{\delta}$). In this subsection we obtain a specific SDE expression for $\pi_t$. Note that, unless additional assumptions are placed on $x\mapsto B(x)$, the noise in \eqref{eq:SDE}~may push the solution out of $B(\Gamma)$ in finite time.
We define the exit time from $\Gamma_\delta$ as 
\begin{equation}
\label{eq:tau_delta}
\tau \,\coloneqq\,\inf\left\{t>0\,:\,\norm*{X_t-\gamma_{\pi(X_t)}}_E=\delta\right\}. 
\end{equation}
(Furthermore we require $\delta$ to be sufficiently small that the regularity results in the first half of Section 3 all hold). This allows us to prove the following strong It{\^o}~formula for $\pi(X_t)$ that holds for $t<\tau$ (see \cite{antonopoulou2016motion}, \cite{IM16} and \cite{M22} for analogous proofs for the `variational phase'). 
\begin{thm}
\label{thm:Ito} 
For any orthonormal basis $\{e_k\}_{k\in\N}$ of $H$ contained in $E$, 
\begin{equation}
\label{eq:piIto}
\begin{aligned}
\pi(X_{t\wedge\tau})\,&=\, \pi(X_0) + \int_0^{t\wedge\tau} \tilde{N}\big( \pi(X_s) \big) \,ds + \int_0^{t\wedge\tau}\sigma^2 \sum_{k\in\N} D^2\pi(X_s)[B(X_s)e_k, B(X_s)e_k]\,ds \\
&\qquad + \sigma\int_0^{t\wedge\tau} D\pi(X_s)B(X_s)\,dW_s,   
\end{aligned} 
\end{equation}
and all of the above integrals are well-defined. (Note that $\tilde{N}: \mathcal{S} \to \mathcal{T}_{\mathcal{S}}$ is defined in \eqref{eq: tilde N definition}). 
\begin{proof}

First, we prove that the It{\^o}~integral 
\begin{equation}
\label{eq:piWint}
\int_0^{t\wedge\tau}D\pi(X_s)B(X_s)\,dW_s
\end{equation}
is well-defined.  Indeed, in more detail, if $\lbrace e_k \rbrace_{k\geq 1} \subset E$ is any orthonormal basis of $H$, and $\lbrace \beta^k_t \rbrace_{k\geq 1}$ are independent one-dimensional Brownian Motions, then the integral
\begin{equation}
\label{eq:piWint again}
\sum_{1\leq j \leq k} e_j  \int_0^{t\wedge\tau}D\pi(X_s)B(X_s)\,d\beta^j_s
\end{equation}
is well-defined for any $k\geq 1$. However thanks to Lemma \ref{Lemma for the stochastic integral}, \eqref{eq:piWint again} converges in mean square as $k\to \infty$, and the limit is by definition the integral in \eqref{eq:piWint}.

We now prove \eqref{eq:piIto}. 
Take $t>0$ and a partition of $[0,t]$ with boundary points $\{t_k\}_{k=1}^M$. 
Let $t^*\coloneqq t\wedge\tau$ and $t_i^*\coloneqq t_i\wedge\tau$ for $i\in\{1,\ldots,M\}$. 
By Theorem \ref{thm:C2}, $\pi(X_{t^*})$ may be written as 
\begin{equation}
\label{eq:PartitionTaylor} 
\pi(X_{t^*})\,=\,\pi(X_{t_0}) + \sum_{i=1}^M\Big( D\pi(X_{t^*_i})[X_{t^*_{i+1}}-X_{t^*_i}] + D^2\pi(w_i)[X_{t^*_{i+1}}-X_{t^*_i},X_{t^*_{i+1}}-X_{t^*_i}] \Big), 
\end{equation}
where $w_i = a_i X_{t^*_i} + (1-a_i)X_{t^*_{i+1}}$ for some $a_i\in[0,1]$. 
Since $(X_t)_{t\ge0}$ is a mild solution of \eqref{eq:SDE}, 
\[
\begin{aligned}
X_{t^*_{i+1}} - X_{t^*_i}\,&=\,\Lambda_{t^*_{i+1}-t^*_i}X_{t^*_i} - X_{t^*_i}+ \int_{t^*_i}^{t^*_{i+1}} \Lambda_{t^*_{i+1}-s}N(X_s)\,ds + 
 \sigma\int_{t^*_i}^{t^*_{i+1}} \Lambda_{t^*_{i+1}-s}B(X_s)\,dW_s \\ 
&\eqqcolon\, U^1_i + U^2_i + \sigma U_i^3. 
\end{aligned}
\]
Inserting this into \eqref{eq:PartitionTaylor}, we then have 
\begin{equation}
\label{eq:PartitionTaylorDecomp}
\begin{aligned}
\pi(X_{t^*})-\pi(X_0) \,&=\, \sum_{i=1}^M D\pi(X_{t^*_i})\left[ U^1_i \right] + \sum_{i=1}^M D\pi(X_{t^*_i})\left[ U^2_i \right] + \sigma\sum_{i=1}^M D\pi(X_{t^*_i})\left[ U_i^3\right] \\
&\quad + \sum_{i=1}^M D^2\pi(w_i)\left[U^1_i+U^2_i,U^1_i+U^2_i\right] +2\sigma\sum_{i=1}^M D^2\pi(w_i)\left[U^1_i+U^2_i,U_i^3\right] \\
&\qquad +\sigma^2\sum_{i=1}^M D^2\pi(w_i)\left[U^3_i,U^3_i\right] \\
&\eqqcolon I + II + III + IV + V + VI. 
\end{aligned}
\end{equation}
We will prove that, as the mesh size of the partition $\{t_i\}_{i=1}^M$ tends to zero, we have 
\begin{align}
(I+II)\,&\rightarrow\,\int_0^{t^*}\tilde{N}\big( \pi(X_s) \big) \,ds, \label{eq: to prove I II} \\
 III\,&\rightarrow\,\int_0^{t^*}D\pi(X_s)B(X_s)\,dW_s, \\
IV\,&\rightarrow\,0, \\ 
V\,&\rightarrow\,0, \\
VI\,&\rightarrow\,\int_0^{t^*}\sum_{k\in\N}D^2\pi(X_s)\left[B(X_s)e_k,B(X_s)e_k\right]\,ds. 
\end{align}
We first handle the nonzero limits. For a fixed partition $\{t_i\}_{i=1}^M$ with mesh size $h>0$, choose arbitrary $i\in\{1,\ldots,M\}$ and let $(\hat{X}_s)_{s\in[t_i,t_{i+1}]}$ be given by 
\[
\hat{X}_{t_i}\,=\,X_{t_i},\quad \hat{X}_s\,=\,\Lambda_{s-t_i}X_{t_i}+\int_{t_i}^{s}\Lambda_{t-r}N(\hat{X}_r)\,dr\quad\text{ for }\quad s\in[t_i,t_{i+1}]. 
\]
Using Corollary \ref{Cor isochronal phase invariant}, for $s\in [t_i,t_{i+1}]$,
\begin{align}
\pi\big( \hat{X}_s \big) &= \hat{\pi}(s) \text{ where } \\
\frac{d\hat{\pi}}{ds}(s) &= \tilde{N}\big( \hat{\pi}(s) \big) \text{ and } \hat{\pi}(t_i) = \pi\big( X_{t_i}\big).
\end{align}
Taking $h \to 0^+$, and employing our assumption that $\tilde{N}: \mathcal{S} \to \mathcal{T}_{\mathcal{S}}$ is continuous, we obtain \eqref{eq: to prove I II}.
For the limit of $III$, note that for small $h>0$ we have that, since $\norm{\Lambda_{t^*_{i+1}-s} - \mathcal{I} }= O(h)$,
\[
\int_{t^*_i}^{t^*_{i+1}}\Lambda_{t^*_{i+1}-s} B(X_s)\,dW_s\,=\, B(X_{t^*_i})[W_{t^*_{i+1}}-W_{t^*_i}] + o(h). 
\]
Hence, observing that $K=t/h$, we have 
\[
\begin{aligned}
III\,&=\, \sum_{i=1}^MD\pi(X_{t^*_i})\left[\int_{t^*_i}^{t^*_{i+1}} \Lambda_{t^*_{i+1}-s}B(X_s)\,dW_s\right]\\
&=\,\sum_{i=0}^K D\pi(X_{t^*_i})B(X_{t^*_i})[W_{t^*_{i+1}}-W_{t^*_i}] + o(h)\\
&\qquad\xrightarrow[h\rightarrow0]{}\,\int_0^{t^*}D\pi(X_s)B(X_s)\,dW_s. 
\end{aligned}
\]
To obtain $VI$, we formally decompose $(W_t)_{t\ge0}$ as 
\begin{equation}
\label{eq:WienerSum}
W_t=\sum_{k\in\N}e_k \beta^k_t 
\end{equation}
for some collection of independent identically distributed Brownian motions $\{ \beta^k_t \}_{k\in\N}$, and we recall that $\{e_k\}_{k\in\N}$ is the orthonormal basis of $H$ from Assumption {\bf D}.

We thus find that
\begin{align}
D^2\pi(w_i)\left[U^3_i,U^3_i\right] = D^2\pi(w_i)\left[\int_{t^*_i}^{t^*_{i+1}} \Lambda_{t^*_{i+1}-s}B(X_s)\sum_{k\in\N}e_k d\beta_s^k,\,\int_{t^*_i}^{t^*_{i+1}} \Lambda_{t^*_{k+1}-s}B(X_s)\sum_{\ell\in\N}e_\ell d\beta_s^\ell\right].
\end{align}
Now as $h\to 0$, it must be that for any $a \in \mathbb{Z}^+$,
\begin{align*}
\sum_{k,\ell \leq a} \sum_{i=1}^{M} D^2\pi(w_i)\left[\int_{t^*_i}^{t^*_{i+1}} \Lambda_{t^*_{i+1}-s}B(X_s)e_k d\beta_s^k,\,\int_{t^*_i}^{t^*_{i+1}} \Lambda_{t^*_{k+1}-s}B(X_s)\sum_{\ell\in\N}e_\ell d\beta_s^\ell\right] \\
\to \sum_{k\leq a} \int_0^{t\wedge \tau} D^2\pi(X_s)[ B(X_s)e_k, B(X_s)e_k ] ds.
\end{align*}
(One proves this by taking expectations of the square difference and proving that it goes to zero). Furthermore, thanks to Lemma \ref{Lemma bound remainder}, 
\begin{align}
\mathbb{E}\bigg[ \bigg\lbrace \sum_{k,\ell > a} \sum_{i=0}^{M-1} D^2\pi(w_i)\left[\int_{t^*_i}^{t^*_{i+1}} \Lambda_{t^*_{i+1}-s}B(X_s)e_k d\beta_s^k,\,\int_{t^*_i}^{t^*_{i+1}} \Lambda_{t^*_{k+1}-s}B(X_s)\sum_{\ell\in\N}e_\ell d\beta_s^\ell\right] \bigg\rbrace^2 \bigg] 
\end{align}
becomes arbitrarily small as $a \to \infty$. This implies VI.
Finally, one easily checks that $IV$ and $V$ tend to zero.

\end{proof}
\end{thm}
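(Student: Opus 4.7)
The plan is to adapt the standard partition-based derivation of It\^{o}'s formula to this infinite-dimensional, stopped setting, using the second-order Fr\'{e}chet differentiability of $\pi$ established earlier in Section \ref{sec:Regularity}. First I would check that the stochastic integral in \eqref{eq:piIto} is well-defined: writing $W_t = \sum_k e_k \beta^k_t$ for independent scalar Brownian motions $\beta^k$, the truncated sums $\sum_{1\le j\le k} \int_0^{t\wedge\tau} D\pi(X_s) B(X_s) e_j \, d\beta^j_s$ have Doob--It\^{o} $L^2$-norms governed by $\sum_{j\ge1}\int_0^{t\wedge\tau}|D\pi(X_s)B(X_s)e_j|^2\,ds$, and Lemma \ref{Lemma for the stochastic integral} ensures the tails vanish uniformly in $x \in \Gamma_\delta$, so the truncated sums form a Cauchy sequence in $L^2$ whose limit we take as the definition of the integral.

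Next, following the decomposition in \eqref{eq:PartitionTaylorDecomp}, I would analyze each of the six terms as the mesh $h\to 0$. For $(I+II)$, the key is to replace the mild increment by the deterministic flow $\hat{X}_s$ on each subinterval $[t_i, t_{i+1}]$; since $\pi(\hat X_s) = \tilde{\phi}_{s-t_i}(\pi(X_{t_i}))$ by Corollary \ref{Cor isochronal phase invariant}, the first-order contribution collapses to a Riemann sum for $\int_0^{t\wedge\tau}\tilde N(\pi(X_s))\,ds$, with errors of order $o(h)$ per subinterval controlled by the boundedness of $D\pi$ from Corollary \ref{Corollary pi Frechet Differentiable}. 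For $III$, I would use $\Lambda_{t^*_{i+1}-s} = \mathcal{I} + O(h)$ to reduce the inner integral to $B(X_{t^*_i})[W_{t^*_{i+1}} - W_{t^*_i}]$ up to $o(h)$, yielding a Riemann--Stieltjes--type approximation to the stochastic integral established in the first step.

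The main work lies in term $VI$. Expanding $U_i^3$ in the basis $\{e_k\}$, one gets a double sum
\[
\sum_{k,\ell\in\mathbb N}\sum_{i}D^2\pi(w_i)\!\left[\int_{t_i^*}^{t_{i+1}^*}\Lambda_{t_{i+1}^*-s}B(X_s)e_k\,d\beta^k_s,\;\int_{t_i^*}^{t_{i+1}^*}\Lambda_{t_{i+1}^*-s}B(X_s)e_\ell\,d\beta^\ell_s\right].
\]
For any fixed truncation level $a$, the off-diagonal terms $(k\ne\ell)$ vanish in $L^2$ by It\^{o} isometry and independence, while the diagonal terms converge to $\sum_{k\le a}\int_0^{t\wedge\tau}D^2\pi(X_s)[B(X_s)e_k, B(X_s)e_k]\,ds$ by a standard $L^2$-computation using the continuity of $x\mapsto D^2\pi(x)[B(x)e_k,B(x)e_k]$ from Lemma \ref{Lemma bound decay basis vectors}. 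The tail uniformly in $h$ is controlled by Lemma \ref{Lemma bound remainder}, which is precisely the ``trace class'' property that makes the full sum meaningful.

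Finally, terms $IV$ and $V$ are routine: each factor of $U^1_i + U^2_i$ is of order $h$ in $E$ by the semigroup bound \eqref{eq:Lambda} and the boundedness of $N$ on $\Gamma_\delta$, so $IV$ is bounded by $\mathrm{Const}\cdot h \cdot \sum_i \|U_i^1+U_i^2\|_E \to 0$, and $V$ is bounded in $L^2$ by a Cauchy--Schwarz estimate combining the $O(h)$ deterministic factor with the $O(h^{1/2})$ stochastic factor. The key difficulty throughout is the term $VI$, where one must carefully justify exchanging the limit $h\to 0$ with the infinite sum over $k$; this is exactly why the regularity lemmas of Section \ref{sec:Regularity}, culminating in Lemmas \ref{Lemma bound decay basis vectors} and \ref{Lemma bound remainder}, were developed in advance.
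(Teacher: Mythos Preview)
Your proposal is correct and follows essentially the same route as the paper's proof: the same well-definedness argument via Lemma \ref{Lemma for the stochastic integral}, the same partition/Taylor decomposition into terms $I$--$VI$, and the same treatment of each term (deterministic flow plus Corollary \ref{Cor isochronal phase invariant} for $I+II$, $\Lambda_{h}\approx\mathcal{I}$ for $III$, truncation plus Lemma \ref{Lemma bound remainder} for $VI$). If anything, you are slightly more explicit than the paper in noting that the off-diagonal terms in $VI$ vanish by independence and in isolating the Cauchy--Schwarz estimate for $V$.
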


\section{Concentration \&~Metastability of the Isochronal Phase}
\label{sec:Ergodic}
In this section, we determine a measure $P_*$ that describes the average behaviour of $\pi_t\coloneqq\pi(X_t)$ on a particular large time scale in the small noise regime. 
The measure $P_*$ is explicitly determined (for small values of $\sigma$) in terms of the first and second derivatives of the isochron map and the diffusion coefficient $B$, as stated in Theorem \ref{thm:ApproximateErgodic}. 

We assume throughout this section $\Gamma$ consists of fixed points. That is, we assume that
\begin{equation}
\tilde{\phi}_t(\alpha) = \alpha \text{ for all } t\geq 0.
\end{equation}
We define the exit time from $\Gamma_\delta$ (the $\delta$ neighbourhood of $\Gamma$) to be 
\begin{equation}
\label{eq:tau_delta}
\tau_{\delta} \,\coloneqq\,\inf\left\{t>0\,:\,\norm*{X_t-\gamma_{\pi(X_t)}}_E=\delta\right\}, 
\end{equation}
and it is assumed that $\delta$ is sufficiently small that all of the regularity results of Section 3 hold. Recall that in \eqref{eq:Concentration}, we assumed that the following lemma holds. 

\begin{lemma}
\label{lemma:LDT2}
There exist constants $c>0$, $\sigma_*>0$, and $\delta_*>0$ such that if  $\sigma\in(0,\sigma_*)$ and $\delta\in(\sigma^2,\delta^*)$, then for $t>0$ it holds that 
\begin{equation}
\label{eq:LDT2}
\pr\left(t>\tau_\delta\right)\,\le\,t\exp\left(-c\sigma^{-2}\delta^2\right). 
\end{equation}
\end{lemma}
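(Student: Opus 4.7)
The plan is to exploit the exponential stability of $\Gamma$ (Assumption D) to show that the squared distance from $\Gamma$ satisfies an SDE with a contracting drift, and then combine this with an exponential martingale concentration inequality of Freidlin-Wentzell type. Throughout, all manipulations are valid for $t < \tau_\delta$ since then the isochron map $\pi(X_t)$ is well-defined and $\|X_t - \gamma_{\pi(X_t)}\|_E \leq \delta$, keeping us inside $\Gamma_\delta$ where the regularity results of Section \ref{sec:Regularity} apply.

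First I would introduce the Lyapunov functional $V(x) = \tfrac{1}{2}\|x - \gamma_{\pi(x)}\|_E^2$ (or a spectrally-weighted variant such as $\tfrac12 \sum_j \lambda_j^{-1} \langle x - \gamma_{\pi(x)}, e_{\pi(x),j}\rangle^2$ to ensure compatibility with the Hilbert-Schmidt assumptions in E and with the Ito formula in Theorem \ref{thm:Ito}). Since $V$ is built from $\pi$ and $\gamma$, both of which are twice continuously differentiable on $\Gamma_\delta$ by Theorem \ref{thm:C2} and Corollary \ref{Corollary pi Frechet Differentiable}, the Ito formula of Theorem \ref{thm:Ito} applies and yields a decomposition
\[
V(X_{t\wedge\tau_\delta}) = V(X_0) + \int_0^{t\wedge\tau_\delta} \mathcal{D}V(X_s)\,ds + \sigma \int_0^{t\wedge\tau_\delta} DV(X_s) B(X_s)\,dW_s,
\]
where $\mathcal{D}V$ gathers the drift from $V(x) = Ax + N(x)$ and the $\sigma^2$-order Ito correction.

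The next step is to bound the drift. Splitting $X_t - \gamma_{\pi(X_t)}$ along and transverse to the manifold using the projections $P_\alpha(t)$, the deterministic part of $\mathcal{D}V$ is dominated by $-2b V(X_t) + C \|X_t - \gamma_{\pi(X_t)}\|_E^3$ thanks to the exponential decay \eqref{eq: assumption exponential decay} and Taylor expansion of $N$ (using that $N$ is $C^3$ by Assumption A). Because we are on the event $\{t < \tau_\delta\}$, the cubic correction is at most $C\delta \cdot V(X_t)$, so for sufficiently small $\delta$ the net deterministic drift is bounded by $-b V(X_t)$. The Ito correction is of order $\sigma^2 \|B(X_s)\|_{HS}^2$, which Assumption E bounds by $C\sigma^2$. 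Altogether,
\[
\mathcal{D}V(X_s) \leq -b V(X_s) + C\sigma^2 \qquad \text{for } s < \tau_\delta.
\]
Moreover the quadratic variation of the martingale part satisfies $d\langle M\rangle_s \leq C\sigma^2 V(X_s)\,ds$, since $DV(x) B(x)$ is proportional to the transverse component of $x - \gamma_{\pi(x)}$ and $B$ is bounded by $C_B$.

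Now I would apply an exponential martingale argument. Consider $Z_t = \exp\bigl(\lambda V(X_{t\wedge\tau_\delta}) - \lambda C \sigma^2 \,(t\wedge\tau_\delta)\bigr)$ for a parameter $\lambda > 0$. By Ito's formula on $\exp$ and the drift/quadratic-variation bounds just established,
\[
dZ_t \leq \lambda Z_t \bigl(-b V(X_t) + \tfrac12 \lambda C \sigma^2 V(X_t)\bigr) dt + \text{martingale},
\]
so choosing $\lambda = b/(C\sigma^2)$ makes $(Z_t)$ a supermartingale. Markov's inequality then gives
\[
\pr\bigl(V(X_{t\wedge\tau_\delta}) \geq \tfrac12 \delta^2\bigr) \leq e^{-\lambda \delta^2/2 + \lambda C\sigma^2 t} \leq \exp(-c' \sigma^{-2}\delta^2 + c'' t).
\]
Since the event $\{\tau_\delta \leq t\}$ is contained in $\{\sup_{s \leq t} V(X_s) \geq \tfrac12\delta^2\}$, applying Doob's maximal inequality to $Z_t$ (or a union bound over a partition of $[0,t]$ with mesh $O(1)$) upgrades this to $\pr(\tau_\delta \leq t) \leq c t \exp(-c\sigma^{-2}\delta^2)$, provided $\delta \geq \sigma^2$ so the $\sigma^2 t$ absorption term does not dominate.

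The main obstacle will be carefully handling the infinite-dimensional Ito calculus: verifying that $V$ is a legitimate input to Theorem \ref{thm:Ito}, that the projection-based decomposition of the drift really gives the $-bV$ bound uniformly in $\alpha \in \mathcal{S}$ (this uses the uniformity in Assumption D), and that the trace-class sums in Assumption E are preserved under the choice of Lyapunov weighting. Once those Hilbert-space technicalities are in place, the exponential martingale argument is essentially the standard Freidlin-Wentzell / orbital stability calculation carried out in \cite{M22} and \cite{KP13}, and it produces a constant $c > 0$ depending only on $b$, $C_B$, and the geometry of $\Gamma$, as required.
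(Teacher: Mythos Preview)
Your approach is plausible in outline but takes a genuinely different route from the paper. The paper does not attempt a direct Lyapunov/martingale argument for the isochronal distance at all. Instead, it introduces the \emph{variational phase} $\beta(x)$ (defined by $P_{\beta(x)}[x-\gamma_{\beta(x)}]=0$), proves in Lemma~\ref{lemma:IsoVar} that $\norm{\gamma_{\pi(x)}-\gamma_{\beta(x)}}\le\delta^2$ whenever $\norm{x-\gamma_{\pi(x)}}\le\delta$, and then reduces the isochronal exit event to a variational exit event via a union bound:
\[
\pr\left[\sup_{s\le t}\norm{X_s-\gamma_{\pi_s}}>\delta\right]\le\pr\left[\sup_{s\le t}\norm{X_s-\gamma_{\beta_s}}>\tfrac{\delta}{2}\right]+\pr\left[\sup_{s\le t}\norm{\gamma_{\beta_s}-\gamma_{\pi_s}}>\tfrac{\delta}{2}\right],
\]
the second term vanishing for small $\delta$. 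The first term is then bounded by directly citing Theorem~5.1 of \cite{M22}, where the Freidlin--Wentzell style argument you describe has already been carried out for the variational phase. So the paper's proof is a two-line reduction to an external result, whereas you are proposing to redo the core estimate from \cite{M22} with $\pi$ in place of $\beta$.

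There is a genuine technical gap in your proposal: Theorem~\ref{thm:Ito} gives an It\^o formula only for the $\mathbb{R}^m$-valued map $x\mapsto\pi(x)$, not for arbitrary $C^2$ functionals of the mild solution $X_t$. Your Lyapunov function $V(x)=\tfrac12\norm{x-\gamma_{\pi(x)}}_E^2$ is an $E$-norm of an infinite-dimensional object, and applying It\^o to it requires (i) that $\norm{\cdot}_E^2$ be twice Fr\'echet differentiable, which fails for general Banach $E$, and (ii) an It\^o formula for $X_t$ itself in $E$, which the paper never establishes. Your spectrally-weighted alternative $\tfrac12\sum_j\lambda_j^{-1}\langle x-\gamma_{\pi(x)},e_{\pi(x),j}\rangle^2$ is more promising, but note that the basis $e_{\pi(x),j}$ depends on $\pi(x)$, so differentiating this functional produces extra terms from $\partial_\alpha e_{\alpha,j}$ that must be controlled, and the trace-class condition \eqref{eq: bounded sum} would need to be checked for the resulting second derivative. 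These are exactly the difficulties that \cite{M22} handles for the variational phase (where the projection $P_\beta$ is more explicit), and the paper sidesteps them entirely by reducing to that case rather than repeating the analysis.
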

 Lemma \ref{lemma:LDT2} has been proved in \cite{M22} in the case that $\Gamma$ consists of stationary points, so long as the isochronal and variational phases are close to one another.  Such a closeness result is proven in Appendix \ref{sec:IsoVar}. 

In the proof of the next theorem, we demonstrate the existence of the measure $P_*$ on $\mathcal{S}$ alluded to above. The measure $P_*$ can be characterized as the unique invariant measure of a Markov process $\grave{\pi}_t$ on $\mathcal{S}$. To specify this process, for any $\alpha \in \mathcal{S}$, define $\mathcal{H}(\alpha) \in \mathbb{R}^{m-1 \times m-1}$ to be the square matrix, with components, for $1\leq j,k \leq m-1$,
\begin{align}
\mathcal{H}^{jk}(\alpha) &=\sum_{k\in \mathbb{N}}\inner*{ \mathcal{B}(\alpha)e_k}{ \mathfrak{v}_{\alpha,j}}  \inner*{\mathcal{B}(\alpha)e_k} {\mathfrak{v}_{\alpha,k}},
\end{align}
and $\lbrace e_k \rbrace_{k \geq 1} \subset E$ is any orthonormal basis of $H$, and in the above $\langle \cdot , \cdot \rangle$ is the Euclidean dot product over $\mathbb{R}^m$. It is immediate from the definition that $\mathcal{H}$ is symmetric and positive semi-definite, and we assume that there exists a non-zero lower bound for its eigenvalues, which holds uniformly for all $\alpha \in \mathcal{S}$. Write $G(\alpha) := \big(G_{jk}(\alpha)\big)_{1\leq j,k \leq m-1}$ to be the $\mathbb{R}^{(m-1) \times (m-1)}$ square matrix that is (i) co-diagonal with $H(\alpha)$, and (ii) with eigenvalues that are the positive square roots of the eigenvalues of $H(\alpha)$. Define $\grave{\pi}_t = (\grave{\pi}^i_t)_{1\leq i \leq m}$ to be the folllowing $\mathbb{R}^m$-valued stochastic process,
\begin{align}
d\grave{\pi}^i_t =& \mathcal{V}_i(\grave{\pi}_t)dt + 
 \sum_{1\leq j,k \leq m-1} \mathfrak{v}_{\grave{\pi}_t,j}^i G_{jk}(\grave{\pi}_t) dW^k_t \text{ where } \label{eq: hat pi stochastic process} \\
\mathcal{V}_i\big( \grave{\pi}_t \big) =& \frac{1}{2} \sum_{k=1}^{\infty}\sum_{j=1}^{m-1}\mathfrak{v}_{\grave{\pi}_t,j}^i D^2\pi_j(\gamma_{\grave{\pi}_t})[B(\gamma_{\grave{\pi}_t})e_k,B(\gamma_{\grave{\pi}_t})e_k] \label{eq: V vector field}
\end{align}
and $ \lbrace W^k_t \rbrace_{1\leq k \leq m-1}$ are independent $\mathbb{R}$-valued Brownian Motions, and $\grave{\pi}_0^i$ is any constant in $\mathcal{S}$. 
\begin{thm}
\label{thm:ApproximateErgodic}
With unit probability a modification of $\grave{\pi}_t$ stays on $\mathcal{S} \subset \mathbb{R}^m$ for all time. Furthermore, the process $\grave{\pi}_t$ possesses a unique invariant measure $P_* \in \mathcal{P}(\mathcal{S})$, such that the following holds: \\
Fix arbitrary $M>0$ and $\epsilon \ll 1$. There exist $\sigma_\epsilon,\,\delta_\epsilon,C_\epsilon>0$ (depending on both $\epsilon$ and $M$) such that if $\sigma \leq \sigma_{\epsilon}$, $\delta<\delta_\epsilon$, and $g\in C^2(\mathcal{S})$ has first and second derivatives bounded in Euclidean norm by $M$, then 
\begin{equation}
\label{eq:CorollaryBound}
\pr\left(\abs*{\frac{1}{t}\int_0^t g\big(\pi_s\big)\,ds - P_*(g)}<\epsilon,\,t<\tau_\delta\right)\,\ge\,1-t\exp\left(-c\sigma^{-2}\delta^2\right) - \exp\left(-C_\epsilon\sigma^2t\right),
\end{equation}
and we recall that $\pi_s = \pi(X_s)$.
\end{thm}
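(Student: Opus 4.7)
The plan is to exploit the separation of timescales in the Itô formula \eqref{eq:piIto}: when $\Gamma$ consists of fixed points we have $\tilde N\equiv 0$, and the drift of $\pi_t$ is $O(\sigma^2)$ while the martingale part has quadratic variation $O(\sigma^2)$, so everything lives on the slow time $r=\sigma^2 s$. Writing $\hat\pi_r := \pi_{r/\sigma^2}$ and $\hat W_r := \sigma W_{r/\sigma^2}$ (a new cylindrical Brownian motion), \eqref{eq:piIto} rescales, on $\{r/\sigma^2<\tau_\delta\}$, to an $O(1)$-speed SDE whose coefficients are evaluated at $X_{r/\sigma^2}$ rather than $\gamma_{\hat\pi_r}$. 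The auxiliary process $\grave\pi_r$ in \eqref{eq: hat pi stochastic process} is precisely the version of this SDE in which $X$ has been replaced by $\gamma_{\grave\pi_r}$, driven by an $(m-1)$-dimensional projection of $\hat W$. The strategy is therefore (i) construct $P_*$ and prove exponential ergodicity of $\grave\pi$, then (ii) couple $\hat\pi$ to $\grave\pi$ and show that on $\{t_\sigma<\tau_\delta\}$ their time-averages of $g$ differ by at most $\epsilon/2$, and finally (iii) absorb the exit event using Lemma \ref{lemma:LDT2}.

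For step (i), I will first check that $\grave\pi_t$ preserves $\mathcal{S}$. The drift $\mathcal{V}_i(\alpha)=\sum_{j=1}^{m-1}\mathfrak v^i_{\alpha,j}(\cdots)$ and the diffusion vector fields $\sum_{j=1}^{m-1}\mathfrak v^i_{\alpha,j}G_{jk}(\alpha)$ are tangent to $\mathcal S$ at every $\alpha$ (because the sum in $j$ stops at $m-1$, so the vectors span $T_\alpha\mathcal{S}$ by \eqref{eq: tangent hyperplane}). After the appropriate Itô–Stratonovich correction, the SDE reduces to a Stratonovich SDE on a compact smooth manifold with smooth tangent vector fields, so its strong solution lives in $\mathcal{S}$ for all time. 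The smoothness of the coefficients (via Section \ref{sec:Regularity} and Assumption E) and the assumed uniform ellipticity of $\mathcal{H}$ on $T\mathcal{S}$ yield, by standard theory for non-degenerate diffusions on compact manifolds, a unique invariant probability measure $P_*$ with smooth density, a spectral gap of the generator, and the concentration inequality
\[
\pr\left(\,\left|\tfrac{1}{T}\!\int_0^T g(\grave\pi_r)\,dr - P_*(g)\right|>\tfrac{\epsilon}{2}\right)\le c_M\exp(-\kappa_M\epsilon^2 T)
\]
for all $g\in C^2(\mathcal S)$ with $\|g\|_{C^2}\le M$ and all $T\ge 1$.

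For steps (ii)–(iii), decompose
\[
\frac{1}{t_\sigma}\!\int_0^{t_\sigma}g(\pi_s)\,ds-P_*(g)
=\frac{1}{T_\sigma}\!\int_0^{T_\sigma}\bigl(g(\hat\pi_r)-g(\grave\pi_r)\bigr)dr+\Bigl(\frac{1}{T_\sigma}\!\int_0^{T_\sigma}g(\grave\pi_r)dr-P_*(g)\Bigr),
\]
with $T_\sigma=\sigma^2 t_\sigma$, where $\grave\pi$ is started at $\pi(X_0)$ and driven by the image of $\hat W$ under the canonical projection. On $\{t_\sigma<\tau_\delta\}$ the coefficients of the SDEs for $\hat\pi$ and $\grave\pi$ differ by $O(\delta)$ in operator norm, by Lipschitzness of $D\pi, D^2\pi, B$ and by Lemmas \ref{Lemma bound decay basis vectors} and \ref{Lemma bound remainder}. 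A trajectory-wise Gronwall bound would blow up on the window $T_\sigma\to\infty$; the main obstacle is therefore to obtain a \emph{uniform-in-time} bound on the difference of time-averages. The cleanest route is at the level of generators: letting $\mathcal A$ be the generator of $\grave\pi$, solve the Poisson equation $\mathcal A u = g-P_*(g)$ (solvable in $C^2(\mathcal S)$ by the spectral gap), apply the Itô formula to $u(\hat\pi_r)$, and use the fact that the generator of $\hat\pi$ differs from $\mathcal A$ by $O(\delta)$. This produces
\[
\left|\tfrac{1}{T_\sigma}\!\int_0^{T_\sigma} g(\hat\pi_r)dr-\tfrac{1}{T_\sigma}\!\int_0^{T_\sigma} g(\grave\pi_r)dr\right|\le C_M\delta+\tfrac{1}{T_\sigma}\cdot\bigl(\text{martingale/boundary}\bigr),
\]
which is $<\epsilon/2$ for $\delta<\delta_\epsilon$ and $T_\sigma$ large. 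Combining with the ergodic concentration bound for $\grave\pi$ and with Lemma \ref{lemma:LDT2} applied to $\{t_\sigma\ge\tau_\delta\}$ yields \eqref{eq:CorollaryBound} with $C_\epsilon = \kappa_M\epsilon^2$. The Poisson-equation step is the main technical hurdle, as it requires quantitative elliptic regularity on $\mathcal{S}$ and careful tracking of boundary terms from the stopping at $\tau_\delta$.
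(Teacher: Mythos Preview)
Your construction of $P_*$ and the manifold–preservation argument in step~(i) match the paper's Lemma~\ref{lemma:H} closely; both simply invoke compactness of $\mathcal S$ and uniform ellipticity of $\mathcal H$ to obtain a Doeblin condition. The substantive divergence is in step~(ii).

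The paper does \emph{not} use the Poisson equation. Instead it partitions $[0,t]$ into blocks of length $\Delta\sim\sigma^{-2}$ (i.e.\ length $O(1)$ in your rescaled time), and at the start of each block it \emph{restarts} an auxiliary process $\tilde\pi^a$ at $\pi_{t_a}$, driven by the same $W$. A Gr{\"o}nwall bound on a single block (Lemma~\ref{lemma:LDT1}) then controls $\sup_{[t_a,t_{a+1}]}\norm{\pi_s-\tilde\pi^a_s}$ by $O(\delta)$ plus a small martingale term; combined with the ergodic bound of Lemma~\ref{lemma:H} this gives $\pr(|\Delta^{-1}\!\int_{t_a}^{t_{a+1}}g(\pi_s)\,ds-P_*(g)|>\epsilon,\,\tau_\delta\ge t_{a+1})\le\eta$ on \emph{each} block. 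The passage from blocks to the full interval is by a Chernoff/iterated-conditioning argument (Lemma~\ref{lemma:ApproximateErgodic}): one writes the exponential of the centred time-integral as a product of block factors $H_a$, conditions successively on $\mathcal F_{t_{a-1}}$, and bounds each conditional expectation using the one-block estimate.

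Your corrector route is a legitimate alternative and is in some ways cleaner: once $u\in C^2(\mathcal S)$ solves $\mathcal A u=g-P_*(g)$, It{\^o} applied to $u(\hat\pi_r)$ gives directly
\[
\frac{1}{T_\sigma}\!\int_0^{T_\sigma}\!g(\hat\pi_r)\,dr-P_*(g)=\frac{u(\hat\pi_{T_\sigma})-u(\hat\pi_0)}{T_\sigma}-\frac{M_{T_\sigma}}{T_\sigma}+\frac{1}{T_\sigma}\!\int_0^{T_\sigma}\![\mathcal A-\hat{\mathcal L}_r]u(\hat\pi_r)\,dr,
\]
so the intermediate comparison with $\grave\pi$ in your decomposition is actually redundant; the $O(\delta)$ bound on the generator discrepancy and an exponential martingale inequality for $M_{T_\sigma}$ already yield $\exp(-C_\epsilon\sigma^2 t)$ with $C_\epsilon\propto\epsilon^2$. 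The trade-off is that you must invoke elliptic regularity on $\mathcal S$ to get $u\in C^2$, whereas the paper's block/Chernoff argument needs only the weak ergodic statement $\pr(|\cdot|>\epsilon/2)\le\eta$ for $T\ge\tilde\Delta_{\epsilon,\eta}$ and never solves a PDE. Both approaches rely on the same Lipschitz control of $x\mapsto D\pi(x)B(x)$ and $x\mapsto\sum_kD^2\pi(x)[B(x)e_k,B(x)e_k]$ over $\Gamma_\delta$, which the paper uses in Lemma~\ref{lemma:LDT1} without deriving it from Section~\ref{sec:Regularity}.
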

%

The following is then an immediate consequence. 

\begin{cor}
In the setting of Theorem \ref{thm:ApproximateErgodic}, if $(t_\sigma)_{\sigma>0}$ is any family of times such that 
\begin{equation}
\label{eq:t_sigma_1}
\sigma^2 t_{\sigma} \xrightarrow[\sigma\rightarrow0]{} \infty \quad\text{ and }\quad
t_\sigma \,\le\,\exp\left(c\sigma^{-2}\delta^2\right),  
\end{equation}
then 
\begin{equation}
\begin{aligned}
\pr\left(\abs*{\frac{1}{t_\sigma}\int_0^{t_\sigma} g\big(\pi_s\big)\,ds - P_*(g)}<\epsilon,\,t_\sigma<\tau_\delta\right)\,&\xrightarrow[\sigma\rightarrow0]{}\,1. 
\end{aligned}
\end{equation}
\end{cor}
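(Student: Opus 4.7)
The plan is to derive the corollary as a direct consequence of Theorem \ref{thm:ApproximateErgodic}, by checking that both error terms on the right-hand side of \eqref{eq:CorollaryBound} vanish in the limit $\sigma \to 0^+$ when $t$ is replaced by the prescribed family $t_\sigma$.

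First, I would fix arbitrary small $\epsilon, \delta > 0$. Since $g \in C^2(\mathcal{S})$ and $\mathcal{S}$ is compact, the first and second derivatives of $g$ are uniformly bounded by some $M > 0$, so Theorem \ref{thm:ApproximateErgodic} applies and supplies constants $\sigma_\epsilon, \delta_\epsilon, C_\epsilon > 0$ (together with the constant $c$ furnished by Lemma \ref{lemma:LDT2}) such that whenever $\sigma \le \sigma_\epsilon$ and $\delta \le \delta_\epsilon$,
\[
\pr\left(\abs*{\frac{1}{t_\sigma}\int_0^{t_\sigma} g(\pi_s)\,ds - P_*(g)} < \epsilon,\, t_\sigma < \tau_\delta\right) \ge 1 - t_\sigma \exp\!\left(-c\sigma^{-2}\delta^2\right) - \exp\!\left(-C_\epsilon \sigma^2 t_\sigma\right).
\]
It therefore suffices to show that the two subtracted terms each tend to $0$ as $\sigma \to 0^+$.

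For the drift error $\exp(-C_\epsilon \sigma^2 t_\sigma)$, the first condition in \eqref{eq:t_sigma_1}, namely $\sigma^2 t_\sigma \to \infty$, immediately gives $\exp(-C_\epsilon \sigma^2 t_\sigma) \to 0$. For the exit-time error $t_\sigma \exp(-c\sigma^{-2}\delta^2)$, I would interpret the upper bound $t_\sigma \le \exp(c\sigma^{-2}\delta^2)$ from \eqref{eq:t_sigma_1} as holding with a constant strictly smaller than the $c$ coming from Lemma \ref{lemma:LDT2}, i.e.\ pass to a slightly smaller constant $c' < c$ so that $t_\sigma \le \exp(c'\sigma^{-2}\delta^2)$; this then yields $t_\sigma \exp(-c\sigma^{-2}\delta^2) \le \exp((c'-c)\sigma^{-2}\delta^2) \to 0$ as $\sigma \to 0$ (for $\delta$ bounded below by a positive constant, which we may assume since $\delta$ is fixed). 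Combining the two limits gives the conclusion.

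There is no real obstacle in this argument — it is simply bookkeeping on the constants from Theorem \ref{thm:ApproximateErgodic} and Lemma \ref{lemma:LDT2}. The only subtlety worth flagging is that the upper bound on $t_\sigma$ in \eqref{eq:t_sigma_1} must be understood with strict inequality in the exponential rate in order for the first error term to vanish; this is standard, and essentially reflects the fact that Theorem B in the introduction is stated with a constant $C$ that can be taken strictly less than the concentration constant in \eqref{eq:Concentration}.
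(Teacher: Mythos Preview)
Your proposal is correct and matches the paper's approach: the paper simply states that the corollary is ``an immediate consequence'' of Theorem \ref{thm:ApproximateErgodic} without giving further details, and your argument spells out exactly the two limits one must check. Your observation about needing the constant in the upper bound on $t_\sigma$ to be strictly smaller than the concentration constant $c$ from Lemma \ref{lemma:LDT2} is a valid and useful clarification of a point the paper leaves implicit.
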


Theorem \ref{thm:ApproximateErgodic}~indicates that, for each small $\sigma>0$, the measure $P_*$ is ``observable'' on the time scale indicated by \eqref{eq:t_sigma_1}, in the sense that on this time scale, a sufficient convergence to $P_*$ before escape from $\Gamma_\delta$ is observed with high probability. 
In Section \ref{sec:Spheres}, we demonstrate how one may use Theorem \ref{thm:ApproximateErgodic}~to compute the noise-induced drift of the isochronal phase when $\mathcal{S}=\mathbb{S}^1$. 


Since $\Gamma$ is assumed to consist of fixed points in this section, $\tilde{N} = 0$, and the It{\^o}~formula \eqref{eq:piIto} therefore assumes the form 
\begin{equation}
\label{eq:piItoRecentered}
d\pi_t\,=\,\frac{\sigma^2}{2}\sum_{k\in\N}D^2\pi(X_t)[B(X_t)e_k,B(X_t)e_k]\,dt + \sigma D\pi(X_t)B(X_t)\,dW_t. 
\end{equation}
Define  
\[
v_t\,\coloneqq\,X_t-\gamma_{\pi_t}, 
\]
so $\norm*{v_t}\lesssim\norm*{v_t}_E<\delta$ for $t<\tau$.
Thus, for $\delta>0$ sufficiently small, one expects that the following approximations are good for $t<\tau_\delta$: 
\begin{equation}
\label{eq:piApprox}
\pi(X_t)\,\simeq\,\pi(\gamma_{\pi(X_t)}),\quad D\pi(X_t)\,\simeq\,D\pi(\gamma_{\pi(X_t)}),\quad D^2\pi(X_t)\,\simeq\,D^2\pi(\gamma_{\pi(X_t)}). 
\end{equation}
We wish to explicitly identify a finite-dimensional diffusion on $\mathcal{S}$ whose probability law is very close to that of $\pi(X_t)$. Define
\begin{align}
\mathcal{B}(\alpha) &\in L(H,T_{\alpha}\mathcal{S}^*) \\
\mathcal{B}(\alpha)\,&\coloneqq\,D\pi(\gamma_\alpha)B(\gamma_\alpha).
\end{align}

We then consider the It{\^o}~diffusion on $\mathcal{S}$ governed by 
\begin{equation}
\label{eq:v}
d\hat{\pi}_t\,=\,\mathcal{V}(\hat{\pi}_t)\,dt + \mathcal{B}(\hat{\pi}_t)\,d\tilde{W}_t,\quad\hat{\pi}_0\,=\,\alpha, 
\end{equation}
where $\tilde{W}_t$ is a Wiener process that is equivalent in law to $W_t$ and independent of $W_t$. This stochastic process is identical (in probability distribution) to the process $\grave{\pi}_t$ in \eqref{eq: hat pi stochastic process}. The only difference is that \eqref{eq:v} is driven by the same infinite-dimensional Wiener Process as the original system, whereas $\grave{\pi}_t$ is driven by $(m-1)$ $\mathbb{R}$-valued Brownian Motions. When we want to emphasize that $\hat{\pi}$ has initial condition $\alpha\in\mathcal{S}$, we write $\hat{\pi_t}=\hat{\pi}_t(\alpha)$. 
\emph{A priori}, $\hat{\pi}_t$ is an $\R^{m}$-valued diffusion. 
However, one can verify that there is a version of this stochastic process such that for any $\alpha\in\mathcal{S}$ we have $\hat{\pi}_t(\alpha)\in\mathcal{S}$ for all $t\leq t_{\delta}$ almost surely. 
For further details see Remark V.1.1 of Ikeda \&~Watanabe \cite{IW14}. 

We also wish to consider a rescaled version of $\hat{\pi}$ defined on finite time intervals - doing this helps clarify the ergodic timescale for the induced phase dynamics on $\mathcal{S}$. 
Fix a time interval $\Delta=\Delta_{\sigma,\epsilon,\eta}>0$ (taken to be a precise value following the proof of Lemma \ref{lemma:H}, below), and let $t_a\coloneqq a\Delta$ for $a\in\N$ (note that the sequence $\{t_a\}_{a\in\N}$ used here is distinct from that in Section \ref{sec:Regularity}). 
Then, define $\tilde{\pi}^a_t$ for $t\in[t_a,t_{a+1}]$ as the solution to 
\begin{equation}
\label{eq:vv}
d\tilde{\pi}^a_t\,=\,\sigma^2\mathcal{V}(\tilde{\pi}^a_t)\,dt + \sigma\mathcal{B}(\tilde{\pi}^a_t)\,dW_t, \quad\tilde{\pi}_{t_a}^a\,=\,\pi_{t_a}, 
\end{equation}
where $W_t$ in \eqref{eq:vv}~is pathwise identical to the process in \eqref{eq:SDE}. 
Note that $\tilde{\pi}^a_t$ is identical in law to $\hat{\pi}_{\sigma^2t}(\pi_{t_a})$ for each $t\in[t_a,t_{a+1}]$, due to the fact that the two Markov processes have the same infinitesimal generator. 

The remainder of this section is as follows. 
In Lemma \ref{lemma:H}, we show that $P_*$ exists as the unique invariant measure of \eqref{eq:v}~to which the distribution of $\hat{\pi}_t$ converges uniformly. 
In Lemma \ref{lemma:LDT1}, we then show that $\tilde{\pi}_t^a$ closely approximates $\pi_t$ on each $[t_a,t_{a+1}]$. 
Using this, we demonstrate that \eqref{eq:CorollaryBound}~holds, completing the proof of Theorem \ref{thm:ApproximateErgodic}.

\subsection{Proof of Theorem \ref{thm:ApproximateErgodic}}
\label{sec:Proof} 
In this section, we make extensive use of the regularity of the isochron map provided for in Section \ref{sec:Regularity}. 
To begin, we note that the existence of a unique, uniformly convergent invariant measure of \eqref{eq:v}~is a well-established result under certain assumptions, which we state in the following lemma. 

\begin{lemma}
\label{lemma:H}
The system \eqref{eq:v}~possesses a unique invariant measure $P_*$ on $\mathcal{S}$ to which it converges at an exponential rate in total variation norm. 
Moreover, for arbitrary $\epsilon,\eta>0$, there exists a constant $\tilde{\Delta}=\tilde{\Delta}_{\epsilon,\eta}>0$ such that if $t\ge \tilde{\Delta}$ and $g\in C^2(\mathcal{S})$ then 
\begin{equation}
\label{eq:Lem1}
\sup_{\alpha\in \mathcal{S}}\pr\left(\abs*{\frac{1}{t}\int_0^t g(\hat{\pi}_s(\alpha))\,ds - P_*(g)}\ge\epsilon/2\right)\,\le\,\eta. 
\end{equation}
\begin{proof}
It is well-known that \eqref{eq:v}~possesses a unique invariant measure such that \eqref{eq:Lem1}~holds. This follows from the fact that $\mathcal{S}$ is compact and the diffusion coefficient is uniformly bounded from below, which implies that $\grave{\pi}_t$ must satisfy a Doeblin Condition. See for instance \cite[Section 1.10.1]{BGL14}. 
\end{proof}
\end{lemma}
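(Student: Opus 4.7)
The plan is to establish the lemma in two stages. First, I would verify existence and uniqueness of the invariant measure $P_*$ together with exponential convergence in total variation. Second, I would upgrade the mean convergence to the probabilistic concentration estimate \eqref{eq:Lem1} via a variance computation and Chebyshev's inequality.

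For the first stage, the key ingredients are the compactness of $\mathcal{S}$, the smoothness of $\mathcal{V}$ and $\mathcal{B}$ on $\mathcal{S}$ (inherited from the $C^2$-regularity of $\pi$ established in Section \ref{sec:Regularity}, plus smoothness of the $\psi^i_\alpha$, $\psi^{*,i}_\alpha$), and crucially the standing assumption that the symmetric positive semidefinite matrix $\mathcal{H}(\alpha)$ has eigenvalues bounded uniformly below on $\mathcal{S}$. The latter ensures that the generator of $\grave{\pi}_t$ (equivalently $\hat{\pi}_t$), restricted to the tangent bundle $T\mathcal{S}$, is uniformly elliptic. Combined with compactness this yields a strong Feller property and strictly positive transition densities at any positive time, from which one derives the Doeblin minorization
\[
\inf_{\alpha\in\mathcal{S}} P_{t_0}(\alpha,\cdot)\,\ge\,\kappa\,\nu(\cdot)
\]
for some $t_0>0$, $\kappa\in(0,1)$ and probability measure $\nu$ on $\mathcal{S}$. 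Standard Markov chain theory (e.g.\ \cite[Section 1.10.1]{BGL14}) then produces a unique invariant measure $P_*$ together with a geometric contraction
\[
\sup_{\alpha\in\mathcal{S}}\norm*{P_t(\alpha,\cdot)-P_*}_{TV}\,\le\,Ce^{-\lambda t}
\]
for constants $C,\lambda>0$ independent of $\alpha$.

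For the second stage, fix $g\in C^2(\mathcal{S})$ and let $\bar{g}\coloneqq g-P_*(g)$. The total variation bound gives
\[
\abs*{\ExpOp_{\alpha}\bar{g}(\hat{\pi}_s)}\,\le\,2\norm*{g}_\infty Ce^{-\lambda s},
\]
and by the Markov property a corresponding decorrelation estimate
\[
\abs*{\ExpOp_{\alpha}\bigl[\bar{g}(\hat{\pi}_s)\bar{g}(\hat{\pi}_r)\bigr]}\,\le\,C'\norm*{g}_\infty^2 e^{-\lambda\abs{s-r}}
\]
for $r\ge s\ge 0$. Integrating this over $[0,t]^2$ yields
\[
\ExpOp_{\alpha}\left[\left(\frac{1}{t}\int_0^t\bar{g}(\hat{\pi}_s)\,ds\right)^2\right]\,\le\,\frac{C''\norm*{g}_\infty^2}{t}.
\]
Chebyshev's inequality then gives
\[
\pr\left(\abs*{\frac{1}{t}\int_0^t g(\hat{\pi}_s(\alpha))\,ds-P_*(g)}\ge\epsilon/2\right)\,\le\,\frac{4C''\norm*{g}_\infty^2}{\epsilon^2 t},
\]
uniformly in $\alpha\in\mathcal{S}$, and choosing $\tilde{\Delta}_{\epsilon,\eta}\coloneqq 4C''\norm*{g}_\infty^2/(\epsilon^2\eta)$ delivers \eqref{eq:Lem1}.

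The main obstacle is the first stage: verifying the Doeblin condition on the manifold $\mathcal{S}$ rigorously, since \eqref{eq:v} is written in ambient $\mathbb{R}^m$-coordinates but must be interpreted intrinsically on $\mathcal{S}$. The cleanest route is to observe that, because $\mathcal{S}$ is smooth and compact, one may work in finitely many local charts, in each of which the lower bound on the eigenvalues of $\mathcal{H}$ translates into uniform ellipticity for a standard It\^o diffusion in Euclidean coordinates; the required Harnack-type bound on the density then follows from classical parabolic theory. Once Doeblin is in hand, the remainder of the proof is routine.
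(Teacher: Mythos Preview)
Your proposal is correct and follows the same route the paper takes: the paper's proof is essentially a one-line appeal to the Doeblin condition, obtained from compactness of $\mathcal{S}$ together with the uniform lower bound on the diffusion coefficient, with a citation to \cite[Section 1.10.1]{BGL14}. Your first stage reproduces exactly this argument with more care about how uniform ellipticity on the manifold is verified, and your second stage (variance bound plus Chebyshev) supplies a concrete derivation of \eqref{eq:Lem1} from the exponential total-variation convergence, which the paper simply absorbs into the phrase ``well-known.''
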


We henceforth fix $\Delta=\Delta_{\sigma,\epsilon,\eta}\coloneqq\sigma^{-2}\tilde{\Delta}_{\epsilon,\eta}$, and let $t_a=a\Delta$ for $a\in\N$. 
Below, we show that $P_*$ in Lemma \ref{lemma:H}~satisfies the properties described in Theorem \ref{thm:ApproximateErgodic}. 
To this end, the following lemma is needed, which places a bound on the difference between $\pi_t$ and $\tilde{\pi}^a_t$ on each $[t_a,t_{a+1}]$. 
From these bounds on time intervals of length $\Delta$, we will obtain the long time bounds in 
Theorem \ref{thm:ApproximateErgodic}. 

\begin{lemma}
\label{lemma:LDT1}
For any $\epsilon,\eta>0$, there exist $\delta_0,\sigma_0>0$ such that for $\sigma<\sigma_0$ and $\delta<\delta_0$, 
\begin{equation}
\label{eq:LDT1}
\pr\left(\sup_{t\in[t_a,t_{a+1}]}\norm*{\pi_t-\tilde{\pi}^a_t}\ge\epsilon,\,\sup_{t\in[t_a,t_{a+1}]}\norm*{v_t}\le\delta\right)\,<\,\eta. 
\end{equation}
\begin{proof}
Using the Lipschitz continuity of the map $\phi$, and the Lipschitz continuity of $\mathcal{V}$, we observe that so long as $v_t<\delta$ we have  
\begin{equation}
\begin{aligned}
\norm*{\pi_t-\tilde{\pi}_t^a}\,&\le\,\frac{\sigma^2}{2}\norm*{\int_{t_a}^t\sum_{k\in\N}D^2\pi(X_s)[B(X_s)e_k,B(X_s)e_k] - \mathcal{V}(\tilde{\pi}_s^a)\,ds} \\
&\qquad\quad+\sigma\norm*{\int_{t_a}^tD\pi(X_s)B(X_s) - \mathcal{B}(\tilde{\pi}_s^a)\,dW_s}\\
&\le\, \frac{\sigma^2}{2}\norm*{\int_{t_a}^t\sum_{k\in\N}D^2\pi(X_s)[B(X_s)e_k,B(X_s)e_k] - \sum_{k\in\N}D^2\pi(\gamma_{\pi_s})[B(\gamma_{\pi_s})e_k,B(\gamma_{\pi_s})e_k]\,ds} \\
&\qquad\quad + \frac{\sigma^2}{2}\norm*{\int_{t_a}^t\mathcal{V}(\pi_s) - \mathcal{V}(\tilde{\pi}_s^a)\,ds} \\
&\qquad\qquad\quad + \sigma\norm*{\int_{t_a}^t D\pi(X_s)B(X_s)-D\pi(\gamma_{\tilde{\pi}_s^a})B(\gamma_{\tilde{\pi}_s^a})\,dW_s} \\
&\le\,\frac{\sigma^2}{2}C_\pi\norm*{\int_{t_a}^t X_s-\gamma_{\pi_s}\,ds} + \frac{\sigma^2}{2}C_\mathcal{V}\norm*{\int_{t_a}^t\pi_s-\tilde{\pi}_s^a\,ds} \\
&\qquad\quad+ \sigma\norm*{\int_{t_a}^t D\pi(X_s)B(X_s)-D\pi(\gamma_{\tilde{\pi}_s^a})B(\gamma_{\tilde{\pi}_s^a})\,dW_s}\\
&\le\, \frac{\sigma^2}{2}C_\pi\Delta\delta +\sigma\norm*{\int_{t_a}^t D\pi(X_s)B(X_s)-D\pi(\gamma_{\tilde{\pi}_s^a})B(\gamma_{\tilde{\pi}_s^a})\,dW_s} + \frac{\sigma^2}{2}C_\mathcal{V}\int_{t_a}^t\norm*{\pi_s-\tilde{\pi}_s^a}\,ds \\
&\eqqcolon\, K_0 + \sigma\norm*{\mathfrak{B}_t} + K_1\int_{t_a}^t\norm{\pi_s-\tilde{\pi}_s^a}\,ds. 
\end{aligned}
\end{equation}
Then, by Gr{\"o}nwall's inequality, we have 
\begin{equation}
\begin{aligned}
\norm*{\pi_t-\tilde{\pi}_t^a}\,&\le\,\sigma\norm*{\mathfrak{B}_t}+K_0 + \int_{t_a}^t(\sigma\norm*{\mathfrak{B}_s}+K_0)K_1e^{K_1(t-s)}\,ds \\
&=\,\sigma\norm*{\mathfrak{B}_t} + K_0 + K_0K_1\int_{t_a}^te^{K_1(t-s)}\,ds + \sigma K_1\int_{t_a}^t\norm*{\mathfrak{B}_s}e^{K_1(t-s)}\,ds \\
&\le\, \sigma\norm*{\mathfrak{B}_t} + K_0 + K_0(e^{K_1(t-t_a)}-1) +\sigma K_1\int_{t_a}^t\norm*{\mathfrak{B}_s}e^{K_1(t-s)}\,ds \\
&\le\,\sigma\norm*{\mathfrak{B}_t} +  K_0e^{K_1\Delta} +\sigma K_1\int_{t_a}^t\norm*{\mathfrak{B}_s}e^{K_1\Delta}\,ds. 
\end{aligned}
\end{equation}
It therefore follows that, so long as $\norm*{v_t}\le\delta$, 
\[
\begin{aligned}
\sup_{t\in[t_a,t_{a+1}]}\norm*{\pi_t-\tilde{\pi}_t^a}\,&\le\,\sigma
\sup_{t\in[t_a,t_{a+1}]}\norm*{\mathfrak{B}_t} + K_0e^{K_1\Delta} + \sigma K_1\Delta e^{K_1\Delta}
\sup_{t\in[t_a,t_{a+1}]}\norm*{\mathfrak{B}_t}\\
&\eqqcolon\, K_0e^{K_1\Delta} + K_2
\sup_{t\in[t_a,t_{a+1}]}\norm*{\mathfrak{B}_t}. 
\end{aligned}
\]
Then, by a union of events bound we find that 
\begin{equation}
\label{eq:UnionOfEvents} 
\begin{aligned}
\pr\left(\sup_{t\in[t_a,t_{a+1}]}\norm*{\pi_t-\tilde{\pi}_t^a}\ge\epsilon,\,\,\,\norm*{v_t}\le\delta\right)\,&\le\,
\pr\left(K_0e^{K_1\Delta}\ge\frac{\epsilon}{2}\right) + \pr\left(\sup_{t\in[t_a,t_{a+1}]}\norm*{\mathfrak{B}_t} \ge\frac{\epsilon}{2K_2}\right). 
\end{aligned}
\end{equation}

For any $\epsilon,\eta,\sigma>0$, we can take $\delta_0>0$ small enough such that for $\delta<\delta_0$ we have 
\begin{equation}
\label{eq:ExplicitBounds}
K_0e^{K_1\Delta}\,=\,
\delta \frac{C_\pi}{2}\sigma^2\Delta e^{{\frac{C_{\mathcal{V}}}{2}\sigma^2\Delta}}\,<\, \frac{\epsilon}{2}.
\end{equation}
Indeed, recalling that $\Delta=O(\sigma^{-2})$, we may choose $\delta_0$ independently of $\sigma$. 
This implies that the first term in the right hand side of \eqref{eq:UnionOfEvents}~is zero. 
To handle the second term, fix $\delta<\delta_0$. 
Then, we apply Markov's inequality for third moments, and the Burkholder-Davis-Gundy inequality to find that 
\[
\begin{aligned}
\pr\left(\sup_{t\in[t_a,t_{a+1}]}\norm*{\mathfrak{B}_t} \ge\frac{\epsilon}{2K_2}\right)\,&\le\,
\frac{8K_2^3}{\epsilon^3}\E{\left(\sup_{t\in[t_a,t_{a+1}]}\norm*{\mathfrak{B_t}}\right)^3}\\
&=\,\frac{8K_2^3}{\epsilon^3}\E{\sup_{t\in[t_a,t_{a+1}]}\norm*{\int_{t_a}^{t} D\pi(X_s)B(X_s) - D\pi(\gamma_{\pi_s})B(\gamma_{\pi_s})\,dW_s }^3}\\
&\le\, \frac{8K_2^3}{\epsilon^3}\E{\sum_{k\in\N}\sup_{t\in[t_a,t_{a+1}]}\norm*{\int_{t_a}^{t}\big[D\pi(X_s)B(X_s) - D\pi(\gamma_{\pi_s})B(\gamma_{\pi_s})\big]e_k\,d\beta_\mathbb{S}^k}^3}\\
&\qquad\qquad + R_t \\ 
&\le\, \frac{8K_2^3}{\epsilon^3}C_3\E{\sum_{k\in\N}\int_{t_a}^{t_{a+1}}\norm*{\big[D\pi(X_s)B(X_s) - D\pi(\gamma_{\pi_s})B(\gamma_{\pi_s})\big]e_k}^{3/2}\,ds}\\
&\le\, \frac{16K_2^3}{\epsilon^3}C_3\Delta \sup_{x\in\Gamma_\delta}\sum_{k\in\N}\norm*{\big[D\pi(x)B(x)e_k}^{3/2}\\
&\le\, \big(16C_3M_\pi\big)K_2^3\Delta\epsilon^{-3}, 
\end{aligned}
\]
where $C_3$ is the constant appearing in the Burkholder-Davis-Gundy inequality. 
The remainder term $R_t$ in the third line is the off diagonal terms appearing in the expression for the cube of a sum, and can be identified with zero. 
Since $K_2=O(\sigma)$ and $\Delta=O(\sigma^{-2})$, we see that for arbitrary $\epsilon,\eta>0$, we can obtain 
\[
\pr\left(\sup_{t\in[t_a,t_{a+1}]}\norm*{\mathfrak{B}_t} \ge\frac{\epsilon}{2K_2}\right)\,\le\,\frac{\eta}{2} 
\]
by taking $\sigma$ less than or equal to some $\sigma_0$ proportional to $\epsilon^{3}\eta$. 
Note that $\sigma_0$ depends on $\delta_0$ via $M_\pi$. 
This completes the proof. 
\end{proof}
\end{lemma}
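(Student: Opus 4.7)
The plan is to derive a Gronwall-type integral inequality for $\|\pi_t - \tilde{\pi}_t^a\|$ on the event $\{\sup_{s} \|v_s\|_E \le \delta\}$, apply the Burkholder--Davis--Gundy inequality (together with Markov's inequality) to control the resulting stochastic driver, and then exploit the $\sigma$-$\Delta$ numerology to close the estimate.

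Subtracting the SDE \eqref{eq:vv} from the Ito formula \eqref{eq:piItoRecentered}, both initialized at $\pi_{t_a}$ at time $t_a$, and driven by the same $W_t$, I obtain
\begin{equation*}
\pi_t - \tilde{\pi}_t^a = \tfrac{\sigma^2}{2}\int_{t_a}^t\!\big[\textstyle\sum_k D^2\pi(X_s)[B(X_s)e_k,B(X_s)e_k] - 2\mathcal{V}(\tilde{\pi}_s^a)\big]\,ds + \sigma\!\int_{t_a}^t\!\big[D\pi(X_s)B(X_s)-\mathcal{B}(\tilde{\pi}_s^a)\big]\,dW_s.
\end{equation*}
Into each integrand I insert $\pm\mathcal{V}(\pi_s)$ (respectively $\pm\mathcal{B}(\pi_s)$). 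The \textbf{linearization term} measures the difference between the full state $X_s$ and its projection $\gamma_{\pi_s}$; by Lemma \ref{Lemma bound decay basis vectors}, Corollary \ref{Corollary pi Frechet Differentiable}, the Lipschitz properties of $B$ (Assumption E), and the smoothness of $\alpha\mapsto\gamma_\alpha$, this difference is bounded by a constant multiple of $\|v_s\|_E \le \delta$ on the good event. The \textbf{phase-difference term} compares $\mathcal{V}(\pi_s)$ (resp.\ $\mathcal{B}(\pi_s)$) with $\mathcal{V}(\tilde{\pi}_s^a)$ (resp.\ $\mathcal{B}(\tilde{\pi}_s^a)$) and is Lipschitz in $\|\pi_s-\tilde{\pi}_s^a\|$. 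Denoting the martingale coming from the diffusion linearization term by $\mathfrak{B}_t$, I therefore obtain
\begin{equation*}
\|\pi_t-\tilde{\pi}_t^a\| \le K_0 + \sigma\|\mathfrak{B}_t\| + K_1\int_{t_a}^t\|\pi_s-\tilde{\pi}_s^a\|\,ds,\qquad K_0 = O(\sigma^2\Delta\delta),\ K_1 = O(\sigma^2).
\end{equation*}
Gronwall gives $\sup_{t\in[t_a,t_{a+1}]}\|\pi_t-\tilde{\pi}_t^a\| \le K_0 e^{K_1\Delta} + K_2 \sup_{t}\|\mathfrak{B}_t\|$ with $K_2 = \sigma(1+K_1\Delta e^{K_1\Delta})$. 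Since $\Delta = \tilde{\Delta}_{\epsilon,\eta}\sigma^{-2}$, the product $K_1\Delta$ is a bounded universal constant, so the exponential stays bounded, $K_0 e^{K_1\Delta} = O(\delta)$, and $K_2 = O(\sigma)$.

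A union bound then reduces the lemma to (i) choosing $\delta_0$ small (depending on $\epsilon$ but \emph{not} on $\sigma$) so that $K_0 e^{K_1\Delta}<\epsilon/2$, and (ii) showing $\pr(\sup_t\|\mathfrak{B}_t\|\ge\epsilon/(2K_2))<\eta$ for small $\sigma$. For (ii), Markov's inequality at some moment $p\ge 2$ combined with Burkholder--Davis--Gundy reduces the problem to bounding $\mathbb{E}\big[\big(\int_{t_a}^{t_{a+1}}\sum_k \|[D\pi(X_s)B(X_s)-D\pi(\gamma_{\tilde{\pi}_s^a})B(\gamma_{\tilde{\pi}_s^a})]e_k\|^2\,ds\big)^{p/2}\big]$. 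The summability in $k$ is handled by Lemma \ref{Lemma for the stochastic integral} and Assumption E, yielding $\mathbb{E}[\sup_t\|\mathfrak{B}_t\|^p] = O(\Delta^{p/2})$ uniformly on $\Gamma_\delta$. Combining with $K_2 = O(\sigma)$ and $\Delta = O(\sigma^{-2})$ gives a probability of order $(K_2/\epsilon)^p \cdot \Delta^{p/2} = O(\sigma^{p/2}\epsilon^{-p})$, which is less than $\eta/2$ once $\sigma<\sigma_0$ for $\sigma_0$ chosen appropriately in terms of $\epsilon,\eta$.

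The main obstacle is the tension between the long time-horizon $\Delta=O(\sigma^{-2})$ and the fact that stochastic control must keep the phase within $O(\epsilon)$: Gronwall over a window of length $\Delta$ naively produces an exponential blow-up, and it is essential that the drift coefficient of the Ito expansion comes with a prefactor $\sigma^2$ that exactly cancels $\Delta$. A secondary technical point is that $\mathfrak{B}_t$ is driven by the infinite-dimensional cylindrical $W_t$, so BDG must be applied basis-wise in $H$ with the $k$-sum handled via the trace-class control in Assumption E; without this, the martingale moment estimate would diverge.
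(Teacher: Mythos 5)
Your route is the paper's route: the same subtraction of \eqref{eq:vv} from \eqref{eq:piItoRecentered}, the same insertion of the intermediate terms $\mathcal{V}(\pi_s)$ and $\mathcal{B}(\pi_s)$, the same constants $K_0=O(\sigma^2\Delta\delta)$, $K_1=O(\sigma^2)$, $K_2=O(\sigma)$, the same Gr\"onwall step exploiting $K_1\Delta=O(1)$, the same union bound killing the deterministic term by shrinking $\delta_0$, and the same Markov-plus-Burkholder--Davis--Gundy plan for the martingale (the paper fixes $p=3$). Up to that last step the argument is correct and matches the paper.

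The final step, however, does not close as you have written it, and this is a genuine gap rather than a cosmetic one. With the correct BDG bound $\mathbb{E}\big[\sup_t\norm{\mathfrak{B}_t}^p\big]\lesssim\mathbb{E}\big[\langle\mathfrak{B}\rangle_{t_{a+1}}^{p/2}\big]=O(\Delta^{p/2})$ (you use, as the paper ultimately does, a uniform bound on the integrand), Markov's inequality yields a probability of order $K_2^p\,\Delta^{p/2}\,\epsilon^{-p}=\sigma^p\cdot\sigma^{-p}\cdot\epsilon^{-p}=O(\epsilon^{-p})$, uniformly in $\sigma$. Your claimed rate $O(\sigma^{p/2}\epsilon^{-p})$ is an arithmetic slip, and without it the bound does not tend to zero as $\sigma\to0$, so no admissible $\sigma_0$ exists. (For comparison, the paper reaches $K_2^3\Delta\,\epsilon^{-3}=O(\sigma)$ only because its BDG step produces $\mathbb{E}\big[\int_{t_a}^{t_{a+1}}\norm{\cdot}^{3/2}ds\big]\le M\Delta$ in place of $\mathbb{E}\big[\big(\int_{t_a}^{t_{a+1}}\norm{\cdot}^{2}ds\big)^{3/2}\big]\le M\Delta^{3/2}$; that is not the standard quadratic-variation form, so you cannot simply import it to repair your estimate.) To genuinely recover the missing power of $\sigma$ you must use, in the martingale term, the same smallness you already used for the drift: on the event $\sup_s\norm{v_s}_E\le\delta$ the integrand of $\mathfrak{B}_t$ is the Lipschitz difference $D\pi(X_s)B(X_s)-D\pi(\gamma_{\tilde{\pi}_s^a})B(\gamma_{\tilde{\pi}_s^a})=O(\delta+\norm{\pi_s-\tilde{\pi}_s^a})$, not merely $O(1)$. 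This makes the quadratic variation $O\big(\Delta(\delta^2+\sup_s\norm{\pi_s-\tilde{\pi}_s^a}^2)\big)$, so that $K_2\sqrt{\Delta}\,\delta=O(\delta)$ is controlled by shrinking $\delta_0$, while the self-referential part must be absorbed by a stochastic Gr\"onwall or bootstrap argument; this absorption is where the real work lies and is absent from your proposal.
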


In the next lemma we `combine' the previous two lemmas, showing that the distribution of $\pi_t$ becomes arbitrarily close to that of $P_*$ over sufficiently long time intervals. 

\begin{lemma}\label{Lemma tilde T epsilon eta definition}
For any $\epsilon,\eta > 0$ and $a\in\N$, for $\delta<\delta_0$ and sufficiently small $\sigma>0$ we have 
\begin{equation}
\label{eq:DeltaDef}
\begin{aligned}
\pr\left(\abs*{\Delta^{-1}\int_{t_a}^{t_{a+1}}g(\pi_s)\,ds - P_*(g)}\ge\epsilon,\,\tau_{\delta}\ge t_{a+1}\right) \leq \eta.  
\end{aligned}
\end{equation}
\end{lemma}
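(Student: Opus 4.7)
The plan is to decompose the quantity inside the probability via the intermediate process $\tilde{\pi}^a$, so that one piece is controlled by the pathwise comparison Lemma \ref{lemma:LDT1} and the other by the ergodicity estimate Lemma \ref{lemma:H}. Concretely, by the triangle inequality and a union of events bound, for any $a \in \mathbb{N}$ we have
\begin{equation*}
\pr\!\left(\abs*{\Delta^{-1}\!\int_{t_a}^{t_{a+1}}\! g(\pi_s)\,ds - P_*(g)} \ge \epsilon,\, \tau_\delta \ge t_{a+1}\right) \le \mathrm{I} + \mathrm{II},
\end{equation*}
where
\begin{align*}
\mathrm{I} &\coloneqq \pr\!\left(\abs*{\Delta^{-1}\!\int_{t_a}^{t_{a+1}}\! \bigl[ g(\pi_s) - g(\tilde{\pi}^a_s)\bigr]\,ds} \ge \epsilon/2,\, \tau_\delta \ge t_{a+1}\right),\\
\mathrm{II} &\coloneqq \pr\!\left(\abs*{\Delta^{-1}\!\int_{t_a}^{t_{a+1}}\! g(\tilde{\pi}^a_s)\,ds - P_*(g)} \ge \epsilon/2\right).
\end{align*}

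To bound $\mathrm{I}$, I will use that $g \in C^2(\mathcal{S})$ restricted to the compact set $\mathcal{S}$ is Lipschitz with some constant $L_g$ depending on the bound $M$ on its first derivative. On the event $\{\tau_\delta \ge t_{a+1}\}$ we have $\norm{v_s} \le \delta$ for all $s \in [t_a, t_{a+1}]$, so Lemma \ref{lemma:LDT1} (applied with target accuracy $\epsilon/(2L_g)$ and target probability $\eta/2$) yields $\sup_{s\in[t_a,t_{a+1}]}\norm{\pi_s - \tilde{\pi}^a_s} < \epsilon/(2L_g)$ with probability at least $1 - \eta/2$, for all sufficiently small $\sigma,\delta$. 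On this event the time-averaged difference is bounded by $L_g \cdot \epsilon/(2L_g) = \epsilon/2$, so $\mathrm{I} \le \eta/2$.

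For $\mathrm{II}$, the key idea is the time-rescaling $s = t_a + \sigma^{-2} r$, i.e.\ $r = \sigma^2(s-t_a)$, which maps $[t_a,t_{a+1}]$ bijectively onto $[0, \sigma^2 \Delta] = [0, \tilde{\Delta}_{\epsilon,\eta}]$. Because $\tilde{\pi}^a$ and $\hat{\pi}$ have the same infinitesimal generator up to the factor $\sigma^2$, the process $r \mapsto \tilde{\pi}^a_{t_a + \sigma^{-2} r}$ is equal in law to $r \mapsto \hat{\pi}_r(\pi_{t_a})$ conditional on $\mathcal{F}_{t_a}$. Consequently, conditioning on $\mathcal{F}_{t_a}$ and then averaging,
\begin{equation*}
\mathrm{II} = \mathbb{E}\!\left[\pr\!\left(\abs*{\tilde{\Delta}^{-1}\!\int_0^{\tilde{\Delta}}\! g(\hat{\pi}_r(\alpha))\,dr - P_*(g)} \ge \epsilon/2\right)\Big|_{\alpha = \pi_{t_a}}\right],
\end{equation*}
and this inner probability is bounded by $\eta/2$ uniformly in the starting point $\alpha \in \mathcal{S}$ by Lemma \ref{lemma:H}, provided $\tilde{\Delta}_{\epsilon,\eta}$ is chosen with target accuracy $\epsilon/2$ and target probability $\eta/2$. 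Hence $\mathrm{II} \le \eta/2$, and summing gives the claim after relabeling $\eta$.

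The main technical point to handle carefully is the matching of the Markov process $\tilde{\pi}^a$ on $[t_a,t_{a+1}]$ with the rescaled process $\hat{\pi}$ on $[0,\tilde{\Delta}]$, conditional on $\mathcal{F}_{t_a}$: although the driving Wiener processes are linked pathwise in the original definition of $\tilde{\pi}^a$, the ergodicity estimate \eqref{eq:Lem1} is about distributions, so the correct tool is the conditional strong Markov property combined with the fact that both processes have the same generator on $\mathcal{S}$. Everything else is a routine application of the triangle inequality, Lipschitz continuity of $g$ on the compact $\mathcal{S}$, and the two preceding lemmas.
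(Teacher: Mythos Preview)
Your proof is correct and follows essentially the same approach as the paper: the same triangle-inequality decomposition into a comparison term (handled via the Lipschitz bound on $g$ and Lemma~\ref{lemma:LDT1}) and an ergodic term (handled via the time-rescaling identification of $\tilde{\pi}^a$ with $\hat{\pi}$ and Lemma~\ref{lemma:H}). Your treatment of the conditioning on $\mathcal{F}_{t_a}$ and the $\eta/2$ bookkeeping is in fact slightly more careful than the paper's version.
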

\begin{proof}
Using a union of events bound, 
\[
\begin{aligned}
&\pr\left(\abs*{\Delta^{-1}\int_{t_a}^{t_{a+1}}g(\pi_s)\,ds - P_*(g)}\ge\epsilon ,\,\tau_{\delta}\ge t_{a+1}\right)\\
&\le\,\pr\left(\abs*{\Delta^{-1}\int_{t_a}^{t_{a+1}} g(\tilde{\pi}_s)\,ds - \,P_*(g)}\,\ge\,\epsilon\right) + \pr\left(\Delta^{-1}\int_{t_a}^{t_{a+1}}\abs*{g(\pi_s)-g(\tilde{\pi}_s)}\,ds\ge\epsilon,\,\tau_{\delta}\ge t + \Delta\right).
\end{aligned}
\] 
Since $\hat{\pi}_{\sigma^2(t-t_a)}$ with initial condition $\pi_{t_a}$ has an identical probability law to $\tilde{\pi}_{t-t_a}^a$, for any $\eta>0$ 
\[
\begin{aligned}
\pr\left(\abs*{\frac{1}{\Delta }\int_{t_a}^{t_{a+1}} g(\tilde{\pi}_s)\,ds - P_*(g)}\ge\epsilon/2\right)\,&=\,\pr\left(\abs*{\frac{1}{\Delta \sigma^{-2}}\int_0^{\Delta \sigma^{-2}} g(\hat{\pi}_s)\,ds - P_*(g)}\ge\epsilon/2\right)\\
&\le\,\eta, 
\end{aligned}
\]
so long as $\sigma^{-2}\Delta_{\sigma,\epsilon,\eta}\geq \tilde{\Delta}_{\epsilon/2,\eta}$, by \eqref{eq:Lem1}, which can be achieved by taking $\sigma$ is sufficiently small. 
Now, since the Lipschitz constant of $g$ is bounded by $m$, Lemma \ref{lemma:LDT1}~implies that there exists $\sigma_0>0$ such that if $\sigma<\sigma_0$
then 
\[
\begin{aligned}
\pr\left(\frac{1}{\Delta}\int_{t_a}^{t_{a+1}} \abs*{g(\pi_s)-g(\tilde{\pi}_s)}\,ds \ge \epsilon/2,\,\tau_\delta>t_{a+1}\right)\,&\le\,\pr\left(\sup_{t\in[t_a,t_{a+1}]}\norm*{\pi_t-\tilde{\pi}_t}\ge\epsilon/(2m),\,\tau_\delta\ge t_{a+1}\right)\\
&\le\,\eta. 
\end{aligned}
\]
\end{proof}

We now combine the above events to prove a bound on the probability of the event ``either the average of $g(\pi_s)$ is close to $P_*$, or $t>\tau_\delta$''. 
Hence, the proof of Theorem \ref{thm:ApproximateErgodic}~follows from a simple property of probability measures. 

\begin{lemma}
\label{lemma:ApproximateErgodic}
Fix arbitrary $M>0$ and small $\epsilon>0$. 
There exist $\sigma_\epsilon,\,\delta_\epsilon,C_\epsilon>0$ (depending on both $\epsilon$ and $M$) such that if $\sigma \leq \sigma_{\epsilon}$, $\delta<\delta_\epsilon$, and $g\in C^2(\mathcal{S})$ has first and second derivatives bounded in Euclidean norm by $M$, then 
\begin{equation}
\pr\left(\left|\frac{1}{t}\int_0^t g\big(\pi_s\big)\,ds - P_*(g)\right|>\epsilon,\,\,t<\tau_\delta \right)\,\le\,\exp\left(-C\sigma^2t\right). 
\label{eq:FirstOrderErgodic1}
\end{equation}
\begin{proof}
Without loss of generality, take $g$ to be non-negative and uniformly bounded above by $1$.
Following Lemma \ref{lemma:H}, the convergence to ergodic averages of \eqref{eq:v}~is uniform. For $\sigma,\epsilon,\eta > 0$, let $\Delta=\sigma^{-2}\tilde{\Delta}_{\epsilon,\eta}$ be as in Lemma \ref{Lemma tilde T epsilon eta definition}, and for $a\in\N$ set $t_a\coloneqq a\Delta$. By Chernoff's bound, for arbitrary $\kappa>0$ and $t>\Delta$, the left hand side of \eqref{eq:FirstOrderErgodic1}~may be bounded as 
\begin{equation}
\label{eq: preliminary last bound}
\begin{aligned}
&\pr\left(\left|\frac{1}{t}\int_0^{t}g\big(\pi_s\big)\,ds - P_*(g)\right|>\epsilon,\,\,t<\tau_\delta\right)\,\le\,\\
&\qquad\qquad\qquad\ExpOp\bigg[\chi(\tau_\delta\ge t)\bigg(\exp\left(\kappa\int_0^t g(\pi_s)\,ds - \kappa t P_*(g) - \kappa t\epsilon\right) \\
&\qquad\qquad\qquad\qquad\qquad\qquad+ \exp\left(-\kappa\int_0^{t}g(\pi_s)\,ds + \kappa t P_*(g) - \kappa \epsilon t\right)\bigg)\bigg]. 
\end{aligned}
\end{equation}
We only bound the first term on the right hand side (the bound of the other term is very similar). We are going to show that there exists a constant $\tilde{\kappa}_{\epsilon}$ (depending on $\epsilon$ but not $\sigma$) such that
\begin{equation}
\label{eq:Eexp_bound}
\E{\chi(\tau_\delta\ge t)\exp\left(\tilde{\kappa}_{\epsilon}\sigma^{2}\int_0^{t}g(\pi_s)\,ds - \tilde{\kappa}_\epsilon\sigma^{2}t P_*(g)\right)}\,\le\,\exp\big(\epsilon \tilde{\kappa}_{\epsilon} \sigma^2 t/2\big). 
\end{equation}
Once we have established \eqref{eq:Eexp_bound}, the result \eqref{eq:FirstOrderErgodic1} follows from substituting $\kappa = \tilde{\kappa}_\epsilon\sigma^{2}$ into \eqref{eq: preliminary last bound}. 
Towards our goal of proving \eqref{eq:Eexp_bound}, define the random variable 
\[
H_a\,\coloneqq\,\chi(\tau_\delta\ge t_{a+1})\exp\left(\kappa\left(\int_{t_a}^{t_{a+1}} g(\pi_s)\,ds - \kappa (t_{a+1} - t_{a}) P_*(g)\right)\right), 
\]
and define $\hat{\mathfrak{a}}$ as the random index such that $\tau_\delta\in[t_{\hat{\mathfrak{a}}},t_{\hat{\mathfrak{a}}+1}]$. 
Then, 
\begin{equation} \label{eq: iterate H a}
\begin{aligned}
&\E{\chi(\tau_\delta\ge t)\exp\left(\kappa\int_0^{t} g(\pi_s)\,ds - \kappa\tau_\delta P_*(g)\right)}\\
&\qquad=\,\E{\chi(\tau_\delta\ge t)\prod_{a=0}^{\hat{\mathfrak{a}}} H_a\times\exp\left(\kappa\int_{t_{\hat{\mathfrak{a}}}}^{\tau_{\delta}}g(\pi_s)\,ds - \kappa(\tau_\delta-t_a) P_*(g)\right)}\\
&\qquad\le\, \E{\chi(\tau_\delta\ge t)\prod_{a=0}^\mathfrak{a} H_a}\exp(2\kappa \Delta), 
\end{aligned}
\end{equation}
and $\mathfrak{a}\coloneqq\floor{t/\Delta}-1$. 
Taking the conditional expectation on events up to the stopping time, we have 
\begin{equation}
\mathbb{E}\bigg[\chi(\tau_\delta\ge t)\exp(-\kappa\tau_\delta\epsilon)\prod_{a=0}^{\mathfrak{a}} H_a\bigg] = \mathbb{E}\bigg[ \prod_{a=1}^{\mathfrak{a}-1} H_a \mathbb{E}\big[ \chi\big\lbrace \tau_{\delta} \geq t \big\rbrace H_{\mathfrak{a}} | \mathcal{F}_{t_{\mathfrak{a}-1}} \big] \bigg].
\end{equation}
We first bound the conditional expectation. Write
\[
q = \int_{t_{\mathfrak{a}}}^{t_{\mathfrak{a}+1}} g(\pi_s)ds - \big\lbrace t_{\mathfrak{a}+1} - t_{\mathfrak{a}} \big\rbrace P_*(g). 
\]
Then, assuming that $\tau_{\delta} \geq t_{\mathfrak{a}-1}$, for any $\alpha > 0$,
\begin{align}\label{eq: expectation decomposition}
\mathbb{E}\big[ \chi\big\lbrace \tau_{\delta} \geq t \big\rbrace H_{\mathfrak{a}} | \mathcal{F}_{t_{\mathfrak{a}-1}} &\big] = \mathbb{E}\big[ \chi\big\lbrace \tau_{\delta} \geq t \text{ and }q \geq \alpha \big\rbrace H_{\mathfrak{a}} | \mathcal{F}_{t_{\mathfrak{a}-1}} \big] + \mathbb{E}\big[ \chi\big\lbrace \tau_{\delta} \geq t \text{ and }q < \alpha \big\rbrace H_{\mathfrak{a}} | \mathcal{F}_{t_{\mathfrak{a}-1}} \big]\nonumber \\
&\leq \mathbb{P}\big( q \geq \alpha, \tau_{\delta} \geq t_{\mathfrak{a}+1}\big) \exp(\kappa \Delta) + \mathbb{P}(q < \alpha,\tau_{\delta} \geq t_{\mathfrak{a}+1})\exp(\alpha \kappa \Delta ) \nonumber\\
&\leq \mathbb{P}\big( q \geq \alpha, \tau_{\delta} \geq t_{\mathfrak{a}+1}\big) \big\lbrace \exp( \kappa \Delta) - \exp(\alpha \kappa \Delta ) \big\rbrace + \exp(\alpha \kappa \Delta ),
\end{align}
since evidently 
\[
\mathbb{P}\big( q \geq \alpha, \tau_{\delta} \geq t_{\mathfrak{a}+1}\big) + \mathbb{P}\big( q < \alpha, \tau_{\delta} \geq t_{\mathfrak{a}+1}\big) \leq 1.
\]
Now, thanks to Lemma \ref{Lemma tilde T epsilon eta definition}, we can take $\eta$ and $\alpha=\epsilon$ to be such that
\[
\mathbb{P}\big( q\geq \epsilon , \tau_{\delta} \geq t_{\mathfrak{a}} \big) \leq \eta,
\]
and we thus obtain from \eqref{eq: expectation decomposition} that
\begin{align}
\E{\chi\big\lbrace \tau_{\delta} \geq t \big\rbrace H_{\mathfrak{a}} | \mathcal{F}_{t_{\mathfrak{a}-1}}} &\leq \exp\big\lbrace \epsilon \Delta + \eta \exp(\kappa \Delta ) - \eta \exp(\epsilon \Delta ) \big\rbrace \nonumber  \\
&\leq \exp\big\lbrace \epsilon\kappa \Delta + 2\eta \kappa \Delta  \big\rbrace ,
\end{align}
as long as $\kappa \Delta$ is sufficiently small (by taking a first order Taylor expansion). 
Since $\Delta \simeq O(\sigma^{-2})$, it suffices to take $\kappa = \sigma^2 \tilde{\kappa}_{\epsilon}$, where $\tilde{\kappa}_{\epsilon} > 0$ is a constant independent of $\sigma$ such that $\tilde{\kappa}_{\epsilon} \to 0$ as $\epsilon \to 0$. 
We thereby obtain 
\begin{equation}
\mathbb{E}\big[ \chi\big\lbrace \tau_{\delta} \geq t \big\rbrace H_{\mathfrak{a}} | \mathcal{F}_{t_{\mathfrak{a}-1}} \big] \leq \exp\big( \epsilon \kappa \Delta +  2\eta\kappa \Delta \big). 
\end{equation}
It thus follows from \eqref{eq: iterate H a} that
\begin{align}
&\E{\chi(\tau_\delta\ge t)\exp\left(\kappa\int_0^{t} g(\pi_s)\,ds - \kappa\tau_\delta P_*(g)\right)} \leq 
 \mathbb{E}\bigg[ \prod_{a=1}^{\mathfrak{a}-1} H_a  \bigg] \exp\big(\epsilon \kappa \Delta + 2\eta \kappa \Delta \big).
\end{align}
We iterate this inequality, from $a= \mathfrak{a}$ down to $a=1$, and obtain that
\begin{align}
\E{\chi(\tau_\delta\ge t)\exp\left(\kappa\int_0^{t} g(\pi_s)\,ds - \kappa\tau_\delta P_*(g)\right)}
\leq & \exp\big( (\mathfrak{a}+1)(2 \eta + \epsilon ) \kappa \Delta \big) \nonumber \\
\leq &  \exp\big( \kappa (2\eta  + \epsilon) \kappa t / \Delta  \big),
\label{eq:Problematic}
\end{align}
where we have substituted $\mathfrak{a}\coloneqq\floor{t/\Delta}-1$ in the penultimate line. 

Substituting \eqref{eq:Problematic}~into \eqref{eq: preliminary last bound}~and using $\kappa/\Delta=\tilde{\kappa}_\epsilon/\tilde{\Delta}<1$ for sufficiently small $\epsilon>0$ yields 
\[
\pr\left(\abs*{\frac{1}{t}\int_0^{t}g(\pi_s)\,ds-P_*(g)}>\epsilon,\,t<\tau_\delta\right)\,\le\,2\exp\left(\sigma^2\tilde{\kappa}\left(\frac{2\tilde{\kappa}}{\tilde{\Delta}}\eta + \left(\frac{\tilde{\kappa}}{\tilde{\Delta}} - 1\right)\epsilon\right)t\right).
\]
Since $\eta$ is a free parameter used in this proof only to determine the length of the time intervals $\Delta$, we may take $\eta$ to be small enough for the constant within the second exponential above to be negative. 
We have thus proven that, for arbitrary $t>0$ and any $\epsilon>0$, there exists a constant $C=C_{\epsilon,\eta}$, which can be taken to be positive such that 
\begin{equation}
\label{eq:Result}
\pr\left(\abs*{\frac{1}{t}\int_0^t g(\pi_s)\,ds-P_*(g)}>\epsilon,\,t<\tau_\delta\right)\,\le\, 2\exp\left(-C\sigma^2t\right). 
\end{equation}
\end{proof}
\end{lemma}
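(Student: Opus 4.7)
The plan is a Chernoff-type concentration argument coupled with a partition of $[0,t]$ into blocks of length $\Delta=\sigma^{-2}\tilde{\Delta}_{\epsilon,\eta}$, on each of which Lemma \ref{Lemma tilde T epsilon eta definition} supplies the required short-time deviation estimate. Normalizing so that $0\le g\le 1$, I would treat the upper tail $\frac{1}{t}\int_0^t g(\pi_s)\,ds-P_*(g)>\epsilon$ (the lower tail is handled symmetrically). For a parameter $\kappa>0$ to be tuned later, Chernoff's inequality gives
\[
\pr\bigl(\tfrac{1}{t}\int_0^t g(\pi_s)\,ds-P_*(g)>\epsilon,\;t<\tau_\delta\bigr)\;\le\;e^{-\kappa\epsilon t}\,\mathbb{E}\!\left[\chi\{\tau_\delta\ge t\}\exp\!\left(\kappa\!\int_0^t g(\pi_s)\,ds-\kappa t P_*(g)\right)\right].
\]

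Next I would factor the exponential over the partition $t_a=a\Delta$, $\mathfrak{a}:=\lfloor t/\Delta\rfloor-1$, setting $H_a:=\chi\{\tau_\delta\ge t_{a+1}\}\exp\bigl(\kappa[\int_{t_a}^{t_{a+1}}g(\pi_s)\,ds-\Delta P_*(g)]\bigr)$. The nesting $\{\tau_\delta\ge t\}\subset\bigcap_{a\le\mathfrak{a}}\{\tau_\delta\ge t_{a+1}\}$ gives $\mathbb{E}[\chi\{\tau_\delta\ge t\}e^{\cdots}]\le e^{2\kappa\Delta}\,\mathbb{E}[\prod_{a=0}^{\mathfrak{a}}H_a]$, with the prefactor absorbing the leftover subinterval of length below $\Delta$. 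Iterating conditional expectations on the filtration $(\mathcal{F}_{t_{a-1}})$ from $a=\mathfrak{a}$ down to $a=0$, the key input is that on $\{\tau_\delta\ge t_{a+1}\}$ the integral-deviation $q_a:=\int_{t_a}^{t_{a+1}}g(\pi_s)\,ds-\Delta P_*(g)$ satisfies $|q_a|\le\epsilon\Delta$ except on a conditional event of probability at most $\eta$; this is precisely Lemma \ref{Lemma tilde T epsilon eta definition} applied via the strong Markov property at $t_a$. Consequently $\mathbb{E}[H_a\mid\mathcal{F}_{t_{a-1}}]\le\eta\, e^{\kappa\Delta}+e^{\epsilon\kappa\Delta}$, which a first-order Taylor expansion in $\kappa\Delta$ upgrades to $\exp(\kappa\Delta(\epsilon+c\eta))$ for some absolute constant $c>0$. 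Telescoping across the $\mathfrak{a}+1$ blocks yields $\mathbb{E}[\prod_a H_a]\le\exp(\kappa(\epsilon+c\eta)t)$.

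Putting the pieces together, the composite bound has the form $\exp\bigl(\kappa t[c\eta-\text{gap}(\kappa\Delta,\epsilon)]\bigr)$, where the gap captures the amount by which the Chernoff exponent $\kappa\epsilon$ exceeds the iteration exponent $\kappa\epsilon$ plus its subleading $\kappa\Delta$ corrections. The main obstacle, and the place that requires genuine care, is the two-parameter tuning. I would take $\kappa=\tilde{\kappa}_\epsilon\sigma^2$ so that $\kappa\Delta=\tilde{\kappa}_\epsilon\tilde{\Delta}_{\epsilon,\eta}$ lies in the small regime where the Taylor expansion is justified; this pins down the $\sigma^2$ in the target rate $\exp(-C\sigma^2 t)$. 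Because $\eta$ influences the bound only through the length $\tilde{\Delta}_{\epsilon,\eta}$ of the ergodic-averaging window of Lemma \ref{lemma:H}, I am free to shrink $\eta$ as a function of $\epsilon$ (and of the chosen $\tilde{\kappa}_\epsilon$) until the bracketed exponent is strictly negative. Symmetrizing to include the lower tail costs only a factor $2$, which is absorbed into $C_\epsilon$, producing the desired concentration estimate \eqref{eq:FirstOrderErgodic1}.
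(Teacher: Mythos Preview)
Your proposal is correct and follows essentially the same route as the paper: Chernoff bound, block decomposition with $\Delta=\sigma^{-2}\tilde{\Delta}_{\epsilon,\eta}$, definition of the block factors $H_a$, iterated conditional expectations with the good/bad event split $\{|q_a|\le\epsilon\Delta\}$ controlled by Lemma~\ref{Lemma tilde T epsilon eta definition}, Taylor expansion after the choice $\kappa=\tilde{\kappa}_\epsilon\sigma^2$, and final tuning of $\eta$ to force a negative exponent. The only cosmetic difference is that you invoke the strong Markov property explicitly where the paper conditions on $\mathcal{F}_{t_{a-1}}$ directly.
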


Lemma \ref{lemma:ApproximateErgodic}~then allows us to prove Theorem \ref{thm:ApproximateErgodic}~using basic facts from probability theory. 

\begin{proof}[Proof of Theorem \ref{thm:ApproximateErgodic}]
Define the events 
\[
\begin{aligned}
F_1\,&\coloneqq\,\left\{\abs*{\frac{1}{t}\int_0^t g\big(\pi_s\big)\,ds - P_*(g)}<\epsilon\right\}, \\
F_2\,&\coloneqq\,\left\{t<\tau_\delta\right\}. 
\end{aligned}
\]
Then, using Theorem \ref{lemma:LDT2}~and Lemma \ref{lemma:ApproximateErgodic}, observing that 
\[
\pr(F_1\cap F_2)=\pr(F_2)-\pr(F_1^c\cap F_2)  
\]
completes the proof. 
\end{proof}

\subsection{Noise-Induced Drift on a Circle}
\label{sec:Spheres}
In this section, we consider the example where $\mathcal{S}$ is the circle $\mathbb{S}^1$, 
such as when $\Gamma$ is a limit-cycle.  Defining the following vector field $\mathcal{Z}: E \to \mathbb{R}^m$,
\begin{align}
\mathcal{Z}(X) =  \sum_{k\in\N} D^2\pi(X)[B(X)e_k, B(X)e_k]
\end{align}
we assume throughout this section that for small enough $\delta$, and all $X,Y \in \Gamma_{\delta}$, there is a constant such that
\begin{align}\label{eq: assumption continuity vector field}
\norm{\mathcal{Z}(X) - \mathcal{Z}(Y)}_E  \leq \rm{Const} \norm{X-Y}_E.
\end{align}

Giacomin, Poquet and Shapira \cite{GPS18} have obtained an analogous result to this for finite-dimensional limit cycles. Infinite dimensional examples that this result applies to include the noise-induced wandering of neural bumps in the visual cortex \cite{kilpatrick2013wandering,maclaurin2020wandering}, and noise-induced deviation in the speed of travelling waves in the Fitzhugh-Nagumo system on a periodic spatial domain \cite{EGK20}.

Rather than viewing $\mathbb{S}^1$ as a subset of $\R^2$, in this section we identify $\mathbb{S}^1$ with the half open interval $[0,2\pi)$. 
For any continuous function $f:\R\rightarrow [0,2\pi)$ (this interval being endowed with the natural topology on $\mathbb{S}^1$ that identifies the endpoints), we let $\overline{f}:\R\rightarrow\R$ be the unique continuous function such that 
\[
f(t)\,=\,\overline{f}(t)\,\,\text{mod}\,2\pi. 
\]
In more concrete terms, $\overline{\pi}_t$ is the arc-length covered by the isochronal phase at time $t>0$. (If one wants more details, refer to Propositions 1.33 \&~1.34 of Hatcher \cite{Ha02}.)

In this section, we emphasize the dependence of the isochronal phase on $\sigma\ge0$ by writing $\pi_t=\pi_t^\sigma$
Note that over any closed subinterval $B_{\zeta} = [\zeta, 2\pi - \zeta ] \subset [0,2\pi)$ for $\zeta > 0$, we have 
\begin{equation}
\label{eq:overlinepi}
d\overline{\pi}_t^\sigma\,=\,d\pi_t^\sigma, 
\end{equation}
where $d\pi_t^\sigma$ is as in \eqref{eq:piIto}. Now write, for $t>0$, 
\[
\nu_t(B_{\zeta}) = \rm{Lebesgue}\big( \lbrace s\le t: \pi_s \notin B_{\zeta} \rbrace \big).
\]
As $\zeta \to 0^+$, it must be that $\nu_t(B^c_{\zeta}) \to 0$ almost surely. Furthermore the measure $P_*$ from Theorem \ref{thm:ApproximateErgodic}~can be interpreted as a measure on $[0,2\pi)$. 
This motivates the following corollary, which implies Corollary D of Section \ref{sec:Results}~just as in the proof of Theorem \ref{thm:ApproximateErgodic}. 

\begin{cor}
\label{cor:Spheres}
Suppose that $\mathcal{S} = \mathbb{S}^1$ and that the vector field $\mathcal{V}$ defined in \eqref{eq: V vector field}~is $C^2$. 
Then 
\begin{equation}
\pr\left(\norm*{\frac{1}{\sigma^2t}\left(\overline{\pi}_{t}^\sigma-\overline{\pi}_t^0\right) - P_*\left(\mathcal{V}\right) }<\epsilon,T_{\sigma}<\tau_\delta\right)\,\ge\,1-t\exp\left(-c\sigma^{-2}\delta^2\right) - \exp\left(-C_\epsilon\sigma^2t\right). 
\label{eq:FirstOrderErgodic3} 
\end{equation} 
\begin{proof}
We first prove 
\begin{equation}
\pr\left(\norm*{\frac{1}{\sigma^2t}\left(\overline{\pi}_{t}^\sigma-\overline{\pi}_t^0\right)- P_*\left(\mathcal{V}\right) }>\epsilon,T_{\sigma}<\tau_\delta\right)\,\le\,\exp\left(-C\sigma^2t\right), 
\label{eq:FirstOrderErgodic2} 
\end{equation} 
from which \eqref{eq:FirstOrderErgodic3}~follows just as in the proof of Theorem \ref{thm:ApproximateErgodic}~at the end of Section \ref{sec:Proof}. 
Noting that $D\pi(x)V(x)$ is constant with respect to $x\in B(\Gamma)$. 
Hence, using \eqref{eq:overlinepi}~and the It{\^o}~formula \eqref{eq:piItoRecentered}~for $\pi$, we have 
\[
\begin{aligned}
&\pr\left(\norm*{\frac{1}{\sigma^2 t}\left(\overline{\pi}_t^\sigma-\overline{\pi}_t^0\right)-P_*(\mathcal{V})}>\epsilon,t<\tau_\delta\right)\\
&\qquad\le\,\pr\left(\norm*{\frac{1}{2t}\int_0^{t}\sum_{k\in\N}D^2\pi(X_s)\left[B(X_s)e_k,B(X_s)e_k\right]\,ds-P_*(\mathcal{V})}>\epsilon/2,t<\tau_\delta\right) \\
&\qquad\qquad\quad+ \pr\left(\norm*{\frac{1}{\sigma t}\int_0^{t}D\pi(X_s)B(X_s)\,dW_s}>\epsilon/2,t<\tau_\delta\right) \\ 
&\eqqcolon\,\pr(F_1) + \pr(F_2). 
\end{aligned}
\]
Then, observe that 
\[
\begin{aligned}
\pr(F_1)\,&\le\,\pr\left(\norm*{\frac{1}{t}\int_0^{t}\frac{1}{2}\sum_{k\in\N}D^2\pi(X_s)[B(X_s)e_k,B(X_s)e_k] - \mathcal{V}(\pi_s)\,ds}>\epsilon/4,t<\tau_\delta\right) \\
&\qquad\quad + \pr\left(\norm*{\frac{1}{t}\int_0^{t}\mathcal{V}(\pi_s)\,ds - P_*(\mathcal{V})}>\epsilon/4,t<\tau_\delta\right)\\
&\eqqcolon\,\pr(F_{1,1})+\pr(F_{1,2}). 
\end{aligned}
\]
By the Lipschitz property of the vector field in \eqref{eq: assumption continuity vector field}, we have that there is a constant $C_{\pi} > 0$,
\[
\norm*{\frac{1}{t}\int_0^{t}\frac{1}{2}\sum_{k\in\N}D^2\pi(X_s)[B(X_s)e_k,B(X_s)e_k] - \mathcal{V}(\pi_s)\,ds}\,\le\,C_\pi\sup_{s\in[0,t]}\norm*{v_s}\,\le\,C_\pi\delta, 
\]
the last inequality holding when $t<\tau_\delta$. 
Hence, so long as we take $\delta<\delta_\epsilon\coloneqq\epsilon/4C_\pi$, we have that $\pr(F_{1,1})=0$. 
Then, under the assumption that $\mathcal{V}$ is $C^2$, the bound on $\pr(F_{1,2})$ follows from the first part of this theorem with $g=\mathcal{V}$, for a possibly different value of $m>0$. 
The bound on $\pr(F_2)$ is in Lemma \ref{Lemma bound the B event}.
\end{proof}
\end{cor}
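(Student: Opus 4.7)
The strategy is to express $\overline{\pi}^\sigma_{t_\sigma} - \overline{\pi}^0_{t_\sigma}$ via the It{\^o}~formula as the sum of an $O(\sigma^2)$ drift-correction and an $O(\sigma)$ martingale term, and then control each separately on the event $\{t_\sigma<\tau_\delta\}$. Since $\Gamma$ consists of fixed points throughout this section, $\tilde{N} \equiv 0$ and hence $\overline{\pi}^0_{t_\sigma}$ is constant in $t$. Combined with the identity $d\overline{\pi}^\sigma_t = d\pi^\sigma_t$ on any closed subinterval $B_\zeta \subset [0,2\pi)$ (with $\zeta \to 0^+$ at the end) and the recentered It{\^o}~formula \eqref{eq:piItoRecentered}, one obtains
\[
\frac{\overline{\pi}^\sigma_{t_\sigma} - \overline{\pi}^0_{t_\sigma}}{\sigma^2 t_\sigma} \,=\, \frac{1}{2 t_\sigma}\int_0^{t_\sigma}\mathcal{Z}(X_s)\,ds \,+\, \frac{1}{\sigma t_\sigma}\int_0^{t_\sigma} D\pi(X_s) B(X_s)\,dW_s,
\]
where $\mathcal{Z}(x) \coloneqq \sum_k D^2\pi(x)[B(x)e_k,B(x)e_k]$.

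For the drift-correction term I would use the Lipschitz bound \eqref{eq: assumption continuity vector field}~on $\mathcal{Z}$ together with $\norm{X_s - \gamma_{\pi_s}}_E\le\delta$ for $s<\tau_\delta$ to replace $\mathcal{Z}(X_s)$ by $2\mathcal{V}(\pi_s)$ up to a deterministic error bounded by $C_\pi \delta$; choosing $\delta<\delta_\epsilon$ sufficiently small forces this contribution below $\epsilon/4$. What remains is to control $\abs*{t_\sigma^{-1}\int_0^{t_\sigma}\mathcal{V}(\pi_s)\,ds - P_*(\mathcal{V})}$, which is exactly Theorem \ref{thm:ApproximateErgodic}~applied to the $C^2$ function $g=\mathcal{V}$; this delivers a deviation probability of at most $\exp(-C_\epsilon\sigma^2 t_\sigma)$, as required.

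For the martingale term $M_{t_\sigma}\coloneqq\sigma\int_0^{t_\sigma\wedge\tau_\delta} D\pi(X_s)B(X_s)\,dW_s$, the integrand has Hilbert--Schmidt norm bounded uniformly on $\Gamma_\delta$ by Lemma \ref{Lemma for the stochastic integral}, so the quadratic variation $\langle M \rangle_{t_\sigma}$ is bounded above by $C\sigma^2 t_\sigma$. A Bernstein-type exponential martingale inequality then yields
\[
\pr\left(\abs*{M_{t_\sigma}}\ge (\epsilon/2)\sigma^2 t_\sigma,\,t_\sigma<\tau_\delta\right) \,\le\, 2\exp\left(-C'_\epsilon \sigma^2 t_\sigma\right).
\]
The main obstacle will be obtaining exponential decay at precisely the rate $\sigma^2 t_\sigma$: a naive Doob/Chebyshev argument controlling $\E{\abs{M_{t_\sigma}}^2}$ only yields a polynomial bound of order $1/(\sigma^2 t_\sigma)$, which is enough for the limiting statement but not for the quantitative inequality \eqref{eq:FirstOrderErgodic3}; one therefore needs to apply an exponential martingale estimate to the localized martingale stopped at $\tau_\delta$. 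Combining the drift and martingale tail bounds via a union bound to obtain \eqref{eq:FirstOrderErgodic2}, then using Lemma \ref{lemma:LDT2}~together with the set-complement identity $\pr(F_1\cap F_2) = \pr(F_2) - \pr(F_1^c\cap F_2)$ as at the end of Section \ref{sec:Proof}, yields \eqref{eq:FirstOrderErgodic3}~and hence the corollary.
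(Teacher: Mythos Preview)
Your proposal is correct and follows essentially the same approach as the paper: the same It{\^o} decomposition into drift and martingale, the same two-step handling of the drift (Lipschitz replacement of $\mathcal{Z}(X_s)$ by $2\mathcal{V}(\pi_s)$, then Theorem~\ref{thm:ApproximateErgodic} with $g=\mathcal{V}$), and the same passage from \eqref{eq:FirstOrderErgodic2} to \eqref{eq:FirstOrderErgodic3} via Lemma~\ref{lemma:LDT2} and the set-complement identity. The only notable difference is in the martingale tail: the paper's Lemma~\ref{Lemma bound the B event} uses Dambis--Dubins--Schwarz followed by Doob's submartingale inequality applied to $\exp(k\tilde W^j)$, which after optimizing over $k$ yields the (stronger) bound $\exp(-K_{F_2}t^3)$ with $K_{F_2}\propto\sigma^2\epsilon^2$; your Bernstein-type inequality with bounded quadratic variation gives $\exp(-C'_\epsilon\sigma^2 t)$, which is weaker but still sufficient since the overall rate in \eqref{eq:FirstOrderErgodic3} is governed by the drift term anyway.
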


\begin{lemma}
\label{Lemma bound the B event}
There exists $K_{F_2}>0$ such that for all $t>0$, 
\[
\begin{aligned}
\pr(F_2)\,&\equiv\, \pr\left(\norm*{\frac{1}{\sigma t}\int_0^{t}D\pi(X_s)B(X_s)\,dW_s}>\epsilon/2,\,t<\tau_\delta\right)\,\\
&\leq\, \exp\left(-K_{F_2}t^3\right). 
\end{aligned}
\]
\end{lemma}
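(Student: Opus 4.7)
The plan is to recognise the integral inside $F_2$ as a continuous $\mathbb{R}^m$-valued martingale whose quadratic variation grows linearly in $t$ (uniformly on the event $\{t<\tau_\delta\}$), and then apply a Bernstein-type exponential martingale inequality to each coordinate.

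\textbf{Step 1 (Localisation).} Define $M_t := \int_0^{t\wedge \tau_\delta} D\pi(X_s) B(X_s)\,dW_s$. Theorem \ref{thm:Ito} and Lemma \ref{Lemma for the stochastic integral} guarantee that $(M_t)_{t\ge 0}$ is well-defined and that, as an $\mathbb{R}^m$-valued process stopped at $\tau_\delta$, it is a genuine continuous martingale (not merely local), since the integrand is Hilbert--Schmidt when $X_s\in\Gamma_\delta$. On $\{t<\tau_\delta\}$ the process $M_t$ coincides with the unstopped integral, so it suffices to bound the tail of $M_t$ itself.

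\textbf{Step 2 (Uniform quadratic-variation bound).} For each component $i$,
\[
\langle M^i\rangle_{t\wedge\tau_\delta} = \int_0^{t\wedge\tau_\delta}\sum_{k\in\N}\bigl(D\pi_i(X_s)[B(X_s)e_k]\bigr)^2\,ds.
\]
Corollary \ref{Corollary pi Frechet Differentiable} provides $|D\pi_i(x)[y]|\le C_\pi\|y\|_E$ uniformly for $x\in\Gamma_\delta$; combining this with the boundedness of $B$ from Assumption {\bf E} and the summability proved in Lemma \ref{Lemma for the stochastic integral}, one obtains a constant $C_Q=C_Q(\delta)$ such that $\langle M^i\rangle_{t\wedge\tau_\delta}\le C_Q\, t$ for every $t\ge 0$ and $1\le i\le m$.

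\textbf{Step 3 (Exponential martingale inequality).} The standard Bernstein inequality for continuous martingales with bounded quadratic variation density yields, for each $i$ and $x>0$,
\[
\pr\bigl(|M^i_t|\ge x,\;\langle M^i\rangle_t\le C_Q t\bigr)\le 2\exp\!\left(-\tfrac{x^2}{2C_Q t}\right).
\]
Setting $x = \epsilon\sigma t/(2\sqrt{m})$ and applying a union bound over $i=1,\ldots,m$ gives
\[
\pr(F_2)\le 2m\exp\!\left(-\tfrac{\epsilon^2\sigma^2 t}{8mC_Q}\right),
\]
which identifies the exponent and the constant $K_{F_2}$ in the stated bound (with the natural decay being linear in $t$, compatible with the $\exp(-C_\epsilon\sigma^2 t)$ term that appears in Corollary \ref{cor:Spheres}).

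\textbf{Main obstacle.} The only non-trivial step is Step 2: the uniform Hilbert--Schmidt control of $D\pi(X_s)B(X_s)$ over $X_s\in\Gamma_\delta$. This requires combining the first-derivative regularity of the isochron map (Corollary \ref{Corollary pi Frechet Differentiable}) with the summability statement of Lemma \ref{Lemma for the stochastic integral}, which in turn rests on Assumption~{\bf E}. Once the linear-in-$t$ quadratic-variation bound is established, Bernstein's inequality produces the required exponentially small tail directly, with $K_{F_2}$ depending on $\epsilon$, $\delta$, and the structural constants of the system.
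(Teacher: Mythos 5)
Your argument is essentially the paper's argument: both proofs treat $\int_0^{t}D\pi(X_s)B(X_s)\,dW_s$ coordinate-wise as a continuous martingale whose quadratic variation is bounded by $C t$ on $\{t<\tau_\delta\}$ (the paper gets this from Corollary \ref{Corollary pi Frechet Differentiable}, Assumption {\bf E} and the bound \eqref{eq: uniform bound for pi quadratic variation}), and then apply an exponential Chebyshev/Doob bound at level $x=\sigma\epsilon t/(2m)$. The paper phrases this via the Dambis--Dubins--Schwarz time change plus Doob's submartingale inequality, while you invoke the Bernstein inequality for continuous martingales directly; these are the same estimate.

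The one substantive discrepancy is the exponent. You obtain $\exp(-C\epsilon^2\sigma^2 t)$, linear in $t$, and you correctly flag this as the ``natural'' rate; the lemma as stated claims $\exp(-K_{F_2}t^3)$. You have not proved the $t^3$ statement, but the $t^3$ statement is not obtainable by this method (nor is it true in general: if $D\pi(X_s)B(X_s)$ were constant the integral would be a Brownian motion, for which the tail at level $\sigma\epsilon t/2$ decays like $\exp(-c\sigma^2\epsilon^2 t)$). The paper's $t^3$ arises from writing the moment generating function of $\tilde{W}^j(ct)\sim N(0,ct)$ as $\exp\left(k^2/(2ct)\right)$ in \eqref{eq: last bound} instead of $\exp\left(k^2 ct/2\right)$; with the correct MGF the optimal choice is $k=\sigma\epsilon/(2mc)$ and one recovers exactly your linear-in-$t$ exponent. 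Your weaker (and correct) bound is all that Corollary \ref{cor:Spheres} actually uses, since its error term is $\exp(-C_\epsilon\sigma^2 t)$. So: same method, correct execution on your side, and the mismatch with the stated $t^3$ rate is a defect of the statement rather than of your proof.
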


\begin{proof}
Recall that $\int_0^{t}D\pi(X_s)B(X_s)\,dW_s$ is an $\mathcal{S}$-valued continuous martingale. By the Dambins-Dubins-Schwarz theorem (see Karatzas and Shreve \cite{KS12}, Theorem 3.4.6), there exist $m$ $\R$-valued Brownian Motions $\tilde{W}^j(t)$ such that the $j$th component of $\int_0^{t}D\pi(X_s)B(X_s)\,dW_s$ can be pathwise-identified with $\tilde{W}^j\big( \zeta^j_t \big)$, where $\zeta^j_t$ is defined as the quadratic variation of the $j$th component of  $\int_0^{t}D\pi(X_s)B(X_s)\,dW_s$ at time $t\ge0$, 
\begin{equation} 
\zeta_t^j\,=\,\left\langle\left(\int_0^\cdot\pi(X_s)B(X_s)\,dW_s\right)_j\right\rangle_t. 
\end{equation}
Observing that 
\begin{equation}
\label{eq: uniform bound for pi quadratic variation}
\begin{aligned}
\zeta_t^j\,&\le\,\left\langle\int_0^\cdot\pi(X_s)B(X_s)\,dW_s\right\rangle_t\\
&=\, \int_0^t\sum_{k\in\N}\norm*{D\pi(X_s)B(X_s)e_k}^2\,ds\\
&\le\,tM_\pi M_B 
\end{aligned}
\end{equation}
for $t<\tau$ and setting $c\coloneqq M_\pi M_B$, we find 
\begin{equation}
\begin{aligned}
\pr\left(F_2\right)\,
&\leq \,\pr\left( \sup_{0\leq s \leq t}\sup_{1\leq j \leq m}\big|\tilde{W}^j\big(\zeta_s^j \big) \big|
>\frac{\sigma\epsilon t}{2m},t<\tau\right) \\
&\leq \,\pr\left( \sup_{0\le s\le t}\sup_{1\leq j \leq m}\big|\tilde{W}^j( cs ) \big|
>\frac{\sigma\epsilon t}{2m},t<\tau\right). 
\end{aligned}
\end{equation}
Now, for any constant $k > 0$ and $0\le s\le t$,  
\begin{equation}
\begin{aligned}
\pr\left(
\tilde{W}^j( cs ) 
>\frac{\sigma\epsilon t}{2m},t<\tau\right) 
\,&= \,\pr\left( \sup_{0\leq s \leq t} \exp\left( k \tilde{W}^j(cs) \right) > \exp\left( k\frac{\sigma\epsilon t}{2m} \right),t<\tau\right)\\
&\leq \,\ExpOp\left[\exp\left( k \tilde{W}^j(ct)- k\frac{\sigma\epsilon t}{2m} \right) \chi\lbrace t<\tau \rbrace\right] \\
&\leq \,\exp\left( \frac{k^2}{2ct} -  \frac{k\sigma\epsilon t}{2m} \right) \label{eq: last bound}
\end{aligned}
\end{equation}
using Doob's submartingale inequality in the penultimate line (noting that $\exp\big( k \tilde{W}^j(ct) \big)$ is a submartingale, since $\exp$ is a convex function), and using the fact that $\tilde{W}^j(ct)$ is normally distributed with variance $ct$ in the last line. To optimize the bound in \eqref{eq: last bound}, choose 
\begin{equation}
k\,=\,\frac{c\sigma\epsilon}{2m}t^2. 
\end{equation}
After substitution, this yields the bound 
\begin{equation}
\pr\left(
\tilde{W}^j( cs ) 
>\frac{\sigma\epsilon t}{2m},t<\tau\right) 
\,\le\,\exp\left(-\frac{c(\sigma\epsilon)^2}{8m^2}t^3\right)\,\eqqcolon\, \exp\left(-K_{F_2}t^3\right). 
\label{eq: not last bound}
\end{equation}
\end{proof}

\begin{appendices}
\section{Closeness of the Isochronal and Variational Phases}
\label{sec:IsoVar}
Let $m'\le m$ denote the intrinsic dimension of $\mathcal{S}$ (and therefore $\Gamma$), and for $\alpha\in\mathcal{S}$ let $\{\alpha_i\}_{i=1}^{m'}$ denote a local coordinate system at $\alpha$.
We recall the $\delta$ neigbourhood of the manifold $\Gamma$, 
\begin{equation}
\Gamma_{\delta} = \big\lbrace x \in H: \norm{x-u} \leq \delta \text{ for some }u\in \Gamma \big\rbrace .
\end{equation}The variational phase is defined \cite{M22}~to be the map $\beta :\Gamma_\delta\rightarrow\mathcal{S}$ defined for sufficiently small $\delta>0$ for $x\in\Gamma_\delta$ as the unique point $\beta(x)\in\mathcal{S}$ such that 
\begin{equation}
\label{eq:Palpha}
P_{\beta(x)}[x-\gamma_{\beta(x)}]\,=\,0. 
\end{equation}
It is well-known from the theory of orbital stability that the variational phase is well-defined as long as $\delta$ is sufficiently small \cite[Lemma 4.3.3]{KP13}. We note this in the following Lemma.
\begin{lemma}
There exists $\hat{\delta}$ such that for all $\delta \leq \hat{\delta}$, and all $x\in \Gamma_\delta$, there exists a unique $\beta(x)$ satisfying \eqref{eq:Palpha}. 
\end{lemma}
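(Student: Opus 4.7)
The plan is to encode \eqref{eq:Palpha} as a finite-dimensional zero-finding problem, apply the implicit function theorem to get local existence and uniqueness, and then globalize via compactness of $\mathcal{S}$. Writing $P_\alpha u = \sum_{j=1}^{m-1} \langle \psi^{*,j}_\alpha, u \rangle \psi^j_\alpha$ (taking $t=0$), the linear independence of $\{\psi^j_\alpha\}_{j=1}^{m-1}$ from \eqref{eq: orthonormal} shows that $P_\alpha[x-\gamma_\alpha]=0$ is equivalent to the vanishing of the map
\begin{equation*}
F^j(\alpha,x) = \big\langle \psi^{*,j}_\alpha,\, x - \gamma_\alpha \big\rangle, \qquad 1 \leq j \leq m-1,
\end{equation*}
defining $F:\mathcal{S}\times \Gamma_\delta \to \mathbb{R}^{m-1}$, which satisfies $F(\alpha_0,\gamma_{\alpha_0})=0$ for every $\alpha_0 \in \mathcal{S}$.

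Next, I would differentiate $F$ along a tangent direction $\mathfrak{v}_{\alpha_0,k}$ at the base-point. The first term (the derivative of $\psi^{*,j}_\alpha$ paired with $x-\gamma_\alpha$) vanishes at $x=\gamma_{\alpha_0}$, while the second contributes $-\langle \psi^{*,j}_{\alpha_0},\psi^k_{\alpha_0}\rangle = -\delta(j,k)$ by \eqref{eq: orthonormal}. Thus $D_\alpha F(\alpha_0, \gamma_{\alpha_0})$ is minus the identity on $T_{\alpha_0}\mathcal{S}$, hence invertible. The Banach-space implicit function theorem then produces an open neighbourhood $U_{\alpha_0} \subset \mathcal{S}$, a radius $\delta_{\alpha_0} > 0$, and a continuous map $\beta_{\alpha_0}$ on the $E$-ball $B_{\delta_{\alpha_0}}(\gamma_{\alpha_0})$ such that $\beta_{\alpha_0}(x)$ is the unique zero of $F(\cdot, x)$ in $U_{\alpha_0}$.

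To promote this to a uniform statement, I would cover the compact manifold $\Gamma$ by finitely many balls $B_{\delta_{\alpha_i}/2}(\gamma_{\alpha_i})$ and set $\hat\delta$ to be the minimum of the $\delta_{\alpha_i}/2$. Any $x \in \Gamma_{\hat\delta}$ then lies in some $B_{\delta_{\alpha_i}}(\gamma_{\alpha_i})$, yielding a local solution. For uniqueness against an \emph{arbitrary} solution $\alpha' \in \mathcal{S}$ of \eqref{eq:Palpha}, I would expand
\begin{equation*}
0 = P_{\alpha'}[x - \gamma_{\alpha'}] = P_{\alpha'}[x - \gamma_{\alpha_i}] + P_{\alpha'}[\gamma_{\alpha_i} - \gamma_{\alpha'}],
\end{equation*}
insert the Taylor expansion $\gamma_{\alpha_i} - \gamma_{\alpha'} = \sum_k (\alpha_i^k - \alpha'^{\,k})\psi^k_{\alpha'} + O(|\alpha_i - \alpha'|^2)$, pair with $\psi^{*,j}_{\alpha'}$, and apply \eqref{eq: orthonormal} together with the uniform bound \eqref{eq: bound psi inner product} to force $|\alpha_i - \alpha'| \lesssim \delta$. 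For $\delta$ sufficiently small, this constrains $\alpha'$ to the local patch $U_{\alpha_i}$, where the implicit function theorem has already provided uniqueness.

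The main obstacle is this last global-to-local step: one must make sure that the $O(|\alpha_i-\alpha'|^2)$ Taylor remainder and the operator norms of $P_{\alpha'}$ can be controlled uniformly in $\alpha' \in \mathcal{S}$, and that the resulting bound on $|\alpha_i - \alpha'|$ is strictly smaller than the IFT radius $\delta_{\alpha_i}$. Compactness of $\mathcal{S}$ and the smoothness hypotheses of Assumption C together with \eqref{eq: bound psi inner product} will supply the uniform estimates; $\hat\delta$ then has to be chosen small enough to respect both the finite cover and this stability estimate, after which existence and uniqueness of $\beta(x)$ on $\Gamma_{\hat\delta}$ follow.
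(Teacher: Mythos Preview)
The paper does not prove this lemma; it simply cites \cite[Lemma 4.3.3]{KP13} and records the result as well known from orbital stability theory. Your implicit-function-theorem argument is exactly the standard route to that result, and the local part (reducing to $F^j(\alpha,x)=\langle\psi^{*,j}_\alpha,x-\gamma_\alpha\rangle$, computing $D_\alpha F(\alpha_0,\gamma_{\alpha_0})=-\mathrm{Id}$, covering by compactness) is correct.

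One point deserves a sharper treatment than you give it. Your global-uniqueness estimate, after pairing with $\psi^{*,j}_{\alpha'}$ and Taylor-expanding $\gamma_{\alpha_i}-\gamma_{\alpha'}$, yields an inequality of the shape
\[
|\alpha_i-\alpha'|\ \le\ C\delta + C'|\alpha_i-\alpha'|^2,
\]
and such a quadratic inequality has \emph{two} branches: a small one ($|\alpha_i-\alpha'|\lesssim\delta$) and a large one ($|\alpha_i-\alpha'|\gtrsim 1/C'$). Uniform control of the remainder, which you flag as the obstacle, does not by itself exclude the large branch. To rule it out one typically adds either a continuity argument in $\delta$ (at $\delta=0$ the only solution is $\alpha'=\alpha_i$, and the solution set varies continuously), or a direct observation that by compactness of $\mathcal{S}\times\mathcal{S}$ the quantity $\sum_j|\langle\psi^{*,j}_{\alpha'},\gamma_{\alpha'}-\gamma_{\alpha}\rangle|$ is bounded below away from the diagonal, so it cannot equal $O(\delta)$ unless $\alpha'$ is already close to $\alpha_i$. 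Either addition is routine, but your sketch as written stops just short of supplying it.
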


\begin{lemma}
\label{lemma:IsoVar}
There exists $\delta_*>0$ such that for $\delta\in(0,\delta_*)$, if $\norm*{x-\gamma_{\pi(x)}}\le\delta$ then 
\[
\norm*{\gamma_{\pi(x)}-\gamma_{\beta(x)}}\,\le\,\delta^2. 
\]
\end{lemma}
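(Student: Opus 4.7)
The approach is to exploit the fact that the defining equations for the isochronal and variational phases differ only by a term that is quadratic in the distance from $\Gamma$. Writing $\alpha = \pi(x)$ and $\beta = \beta(x)$, the isochronal phase satisfies $\Xi^j(\alpha,x)=0$ by \eqref{eq: Xi j definition}, while the variational phase satisfies $\langle \psi^{*,j}_\beta, x - \gamma_\beta\rangle = 0$ for $1 \le j \le m-1$. Setting $F^j(\alpha,x) := \langle \psi^{*,j}_\alpha, x - \gamma_\alpha\rangle$, one therefore has $F^j(\beta,x) = 0$ and
\[
F^j(\alpha,x) = -\int_0^\infty \langle \psi^{*,j}_{\tilde{\phi}_s(\alpha)},\, \tilde{G}(s,\alpha,x)\rangle\,ds.
\]
The plan is thus to (i) show $|F^j(\alpha,x)| = O(\delta^2)$, (ii) invert via an implicit function argument to obtain $\|\alpha-\beta\| = O(\delta^2)$, and (iii) transfer this to $\|\gamma_\alpha - \gamma_\beta\|$ by the smoothness of $\alpha \mapsto \gamma_\alpha$.

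For step (i), the key input is a sharp decay estimate on $v(s) := \phi_s(x) - \gamma_{\tilde{\phi}_s(\alpha)}$. The fixed-point construction in Theorem \ref{Theorem existence isochronal} only produces $\|v(s)\|_E \le \delta^{1/2} e^{-bs}$, which by itself would yield $|F^j| = O(\delta)$—too weak. I plan to bootstrap: substituting the preliminary bound back into the mild-form identity $v = \Lambda(\alpha,v)$, then using the exponential decay of $V_\alpha$ from Assumption D on the homogeneous part, the Taylor bound $\|G(s,\alpha,v)\|_E \le K\|v(s)\|_E^2$, and the fact that the tangential-projection correction in $\Lambda$ is also controlled by $\int_t^\infty \|G\|\,ds$, the quantity $M := \sup_{t \ge 0} e^{bt}\|v(t)\|_E$ will satisfy an estimate of the form $M \le C\delta + C_1 M^2$. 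The preliminary $M \le \delta^{1/2}$ ensures $C_1 M < 1/2$ for small $\delta$, which closes the bootstrap to give $M \le 2C\delta$, i.e.\ $\|v(s)\|_E \le C'\delta\, e^{-bs}$. Inserting this into the Taylor remainder $\tilde{G}$ yields $\|\tilde G(s,\alpha,x)\|_E \le C\delta^2 e^{-2bs}$, and combined with \eqref{eq: bound psi inner product} this integrates to $|F^j(\alpha,x)| \le C''\delta^2$.

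For step (ii), I apply the implicit function theorem to the map $\alpha \mapsto F^j(\alpha,x)$. Its directional derivatives along the tangent vectors $\mathfrak{v}_{\alpha,k}$ are
\[
\partial_k F^j(\alpha,x) = \langle \partial_k \psi^{*,j}_\alpha, x-\gamma_\alpha\rangle - \langle \psi^{*,j}_\alpha, \psi^k_\alpha\rangle = -\delta(j,k) + O(\delta)
\]
by \eqref{eq: orthonormal}, \eqref{eq: bound psi inner product}, and the smoothness of $\psi^{*,j}$. So for $\delta$ small the Jacobian is invertible with uniformly bounded inverse, and since $F^j(\beta,x)=0$, a mean value argument along a path in $\mathcal{S}$ joining $\alpha$ to $\beta$ gives $\|\alpha - \beta\| \le C\delta^2$. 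Assumption C then yields $\|\gamma_\alpha - \gamma_\beta\| \le C'\|\alpha-\beta\| \le C''\delta^2$, and shrinking $\delta_*$ absorbs the constant into the bound $\delta^2$. The main obstacle is the bootstrap for the decay of $v$: without improving the preliminary $\delta^{1/2}$-rate coming out of the existence proof one only obtains $\|\gamma_\alpha - \gamma_\beta\| = O(\delta^{3/2})$. The non-obvious point in executing the bootstrap is that the tangential correction term in $\Lambda$—which depends only on $G$ and hence is quadratic in $v$ rather than linear in $x-\gamma_\alpha$—also contributes at order $M^2$, so the inequality $M \le C\delta + C_1 M^2$ closes with $C$ independent of $\delta$.
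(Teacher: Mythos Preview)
Your argument is essentially correct and considerably more detailed than the paper's own proof. The paper simply invokes an orbital-stability estimate from \cite[Theorem~4.3.5]{KP13}, namely $\norm{\pi(x)-\beta(x)} \le \text{Const}\,\norm{x-\gamma_{\beta(x)}}^2$, and then appeals to the Lipschitz continuity of $\alpha\mapsto\gamma_\alpha$. You instead give a self-contained derivation from the fixed-point characterisation of $\pi$ already developed in Section~\ref{sec:Regularity}: you compare the defining equations $\Xi^j(\alpha,x)=0$ and $\langle\psi^{*,j}_\beta,x-\gamma_\beta\rangle=0$, bootstrap the decay of $v(s)=\phi_s(x)-\gamma_{\tilde\phi_s(\alpha)}$ from $\delta^{1/2}e^{-bs}$ to $C\delta\,e^{-bs}$, and then invert via the Jacobian computation of Lemma~\ref{Lemma Invertible Matrix}. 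Your route has the advantage of being internal to the paper's framework and making transparent exactly where the quadratic gain comes from (the Taylor remainder $\tilde G$); the paper's route is shorter but offloads the work to the literature.

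One small point: your final claim that ``shrinking $\delta_*$ absorbs the constant into the bound $\delta^2$'' does not work as stated, since $C''\delta^2\le\delta^2$ cannot be arranged by taking $\delta$ small when $C''>1$. The paper's proof has the same gap: it too arrives at $\text{Const}\cdot\delta^2$ and simply declares the lemma proved. In fact the lemma is only used in the proof of Lemma~\ref{lemma:LDT2}, where a bound of the form $C\delta^2$ suffices (one needs $C\delta^2<\delta/2$, which holds for small $\delta$), so the missing constant is harmless in context.
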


\begin{proof}
It is well-established in the theory of orbital stability that for all sufficiently small $\delta$ and all $x\in \Gamma_{\delta}$,
\begin{align}
\norm*{\pi(x) - \beta(x)}  \leq \text{Const}\norm*{ x - \gamma_{\beta(x)} }^2. 
\end{align}
See the proof of Theorem 4.3.5 in \cite{KP13} for more details. Since the map $\beta \to \gamma_{\beta}$ is Lipschitz, we have proved the lemma. 
\end{proof}

By using Lemma \ref{lemma:IsoVar}, we are able to directly apply Theorem 5.1 of \cite{M22}~to prove \ref{lemma:LDT2}.

\begin{proof}[Proof of Lemma \ref{lemma:LDT2}]
Let $\beta_t\coloneqq\beta(X_t)$ be the variational phase of $X_t$. 
Then, by a union of events bound we have 
\[
\begin{aligned}
\pr[t>\tau_\delta]\,&=\,\pr\left[\sup_{s\in[0,t]}\norm*{X_s-\gamma_{\pi_s}}>\delta\right]\\
&\le\,\pr\left[\sup_{s\in[0,t]}\norm*{X_s-\gamma_{\beta_s}}>\frac{\delta}{2}\right] + \pr\left[\sup_{s\in[0,t]}\norm*{\gamma_{\beta_s}-\gamma_{\pi_s}}>\frac{\delta}{2}\right]. 
\end{aligned}
\]
By Lemma \ref{lemma:IsoVar}, we know that so long as $t<\tau_\delta$ we have  
\[
\begin{aligned}
 \pr\left[\sup_{s\in[0,t]}\norm*{\gamma_{\beta_s}-\gamma_{\pi_s}}>\frac{\delta}{2}\right]\,&\le\,
 \pr\left[\sup_{s\in[0,t]}\norm*{X_s-\gamma_{\pi_s}}^2>\frac{\delta}{2}\right]\\
&\le\,\pr\left[\delta^2>\frac{\delta}{2}\right]. 
\end{aligned}
\]
Taking $\delta$ sufficiently small, we thus obtan 
\[
\pr[t>\tau_\delta]\,\le\,\pr\left[\sup_{s\in[0,t]}\norm*{X_s-\gamma_{\beta_s}}>\frac{\delta}{2}\right]. 
\]
Applying Theorem 5.1 of \cite{M22}~completes the proof. 
\end{proof}

\end{appendices}

\subsection*{Acknowledgements} 
Z.P.A.~would like to thank J{\"u}rgen Jost for his continuing patience and support. 
This work was funded in part by the International Max Planck Research School for Mathematics in the Sciences.

\bibliographystyle{plain}
\bibliography{bibliography}{}

\end{document}